\font\tenrm=cmr10
\font\bss=cmssdc10 at 12 pt   
\font\bigss=cmssdc10 scaled 2300
\font\cmsslll=cmss10 at 14 pt  
\renewcommand{\a}{\alpha}  
\renewcommand{\b}{\beta}
\newcommand{\f}{\varphi}  
\newcommand{\g}{\gamma}
\renewcommand{\l}{\lambda}  
\renewcommand{\o}{\omega}  
\renewcommand{\r}{\rho}  
\newcommand{\s}{\sigma}
\newcommand{\G}{\Gamma}
\newcommand{\bR}{\mathbb{R}}
\newcommand{\ga}{\mathfrak{a}}  
\newcommand{\gb}{\mathfrak{b}}
\renewcommand{\gg}{\mathfrak{g}}  
\newcommand{\gh}{\mathfrak{h}}  
\newcommand{\gi}{\mathfrak{i}}  
\newcommand{\gj}{\mathfrak{j}}  
\newcommand{\gl}{\mathfrak{l}}  
\newcommand{\gn}{\mathfrak{n}}  
\newcommand{\gp}{\mathfrak{p}}  
\newcommand{\gq}{\mathfrak{q}}  
\newcommand{\gs}{\mathfrak{s}}  
\newcommand{\gt}{\mathfrak{t}}  
\newcommand{\gu}{\mathfrak{u}}
\newcommand{\gsp}{\mathfrak{sp}}
\newcommand\Sp{\mathrm{Sp}}  
\newcommand\GL{\mathrm{GL}}  
\newcommand\SL{\mathrm{SL}}
\newcommand\Z{\mathrm{Z}}
\newcommand{\cC}{\mathcal{C}}
\newcommand{\ra}{\rightarrow}  
\newcommand{\tensor}{\otimes}
\newcommand{\spanof}[1]{{\langle \, {#1} \, \rangle}}
\DeclareMathOperator\End{End}
\DeclareMathOperator\ad{ad}  
\DeclareMathOperator\Hom{Hom}  
\DeclareMathOperator\Der{Der}  
\DeclareMathOperator{\codim}{codim}
\DeclareMathOperator{\corank}{corank}
\DeclareMathOperator\Inn{Inn}  
\DeclareMathOperator\out{out}
\newtheorem{Th}{Theorem}[section]  
\newtheorem{Prop}[Th]{Proposition}  
\newtheorem{Cor}[Th]{Corollary}  
\newtheorem{Lem}[Th]{Lemma}  
\theoremstyle{definition} % in particular, no italics 
\newtheorem{Def}[Th]{Definition}  
\newtheorem{Ex}[Th]{Example}
\theoremstyle{remark}
\newtheorem{claim}{Claim}
\newtheorem*{remark}{Remark}
\newcommand{\bt}{\begin{Th}\ \ }  
\newcommand{\et}{\end{Th}}  
\newcommand{\bp}{\begin{Prop}\ \ }  
\newcommand{\ep}{\end{Prop}}  
\newcommand{\bc}{\begin{Cor}\ \ }  
\newcommand{\ec}{\end{Cor}}  
\newcommand{\bl}{\begin{Lem}\ \ }  
\newcommand{\el}{\end{Lem}}  
\newcommand{\bd}{\begin{Def}\ \ }  
\newcommand{\ed}{\end{Def}}  
\newcommand{\bex}{\begin{Ex}\ \ }  
\newcommand{\eex}{\end{Ex}}  
\newcommand{\pf}{\begin{proof}}
\newcommand{\epf}{\end{proof}}
\newcommand{\n}{\nabla}  
\newcommand{\ot}{\otimes}
\newcommand{\be}{\begin{equation}}  
\newcommand{\ee}{\end{equation}}  
\newcommand\re[1]{(\ref{#1})}  
\newcommand{\arr}{\begin{array}{rlll}}  
\newcommand{\ea}{\end{array}}  
\newcommand{\bea}{\begin{eqnarray}}  
\newcommand{\eea}{\end{eqnarray}}  
\newcommand{\bean}{\begin{eqnarray*}}  
\newcommand{\eean}{\end{eqnarray*}}  
\DeclareMathOperator{\im}{im}
\begin{document}  
\begin{titlepage}
%\rightline{} 
%\rightline{hep-th/yymmnnn}  

\vskip 1.5 true cm  
\begin{center}  
{\bigss  Symplectic Lie groups I-III \\ \vspace{1ex}
\bss Symplectic Reduction, Lagrangian extensions, and 
existence of Lagrangian normal subgroups
%\\[.5em] 
}

\vskip 1.0 true cm   
{\cmsslll  O.\ Baues and  V.\ Cort\'es} \\[3pt] 
{\tenrm   Institut f\"ur Algebra und Geometrie\\ 
Karlsruher Institut f\"ur Technologie, 
D-76128 Karlsruhe, Germany\\
oliver.baues@kit.edu}\\[1em]  
{\tenrm   Department Mathematik 
und Zentrum f\"ur Mathematische Physik\\ 
Universit\"at Hamburg, 
Bundesstra{\ss}e 55, 
D-20146 Hamburg, Germany\\  
cortes@math.uni-hamburg.de}\\[1em]   
%October 28, 2008 

\vspace{2ex}
% \rightline
% {Draft: v30, \today}  
{October 9, 2013}

\end{center}

\vskip 1.0 true cm  
%%%%%%%%%%%%%%%%%%%%%%%%%%%%%%%%%%%%%%%%%%%%%%%%%%%%%%%%  
\baselineskip=18pt  
\begin{abstract}  
\noindent  
We develop the structure theory of symplectic Lie groups based on the study of their isotropic normal subgroups. The article consists of three main parts.
In the first part we show that every symplectic Lie group admits a sequence of subsequent symplectic reductions to a unique irreducible symplectic Lie group. 
% which is independent of the particular choice of reduction sequence. 
 The second part concerns the symplectic geometry of cotangent symplectic Lie groups and the theory of Lagrangian extensions of flat Lie groups.
% 
%In the next part we describe the symplectic geometry of cotangent symplectic  Lie groups in conjuction with the theory of Lagrangian extensions of flat Lie groups. We show that Lagrangian extensions of flat Lie groups are classified on the Lie algebra level by a suitable cohomology group with coefficients in the dual module associated to the flat connection.  
% 
In the third part of the article we analyze the existence problem for Lagrangian normal subgroups in nilpotent symplectic Lie groups. 
%We construct examples of nilpotent symplectic Lie groups which do not admit Lagrangian subgroups, and thereby show that starting in dimension eight such subgroups do not always exist. 
%To the contrary we show that Lagrangian normal
%subgroups always exists for filiform or two-step nilpotent  symplectic Lie groups. 
\end{abstract}

\end{titlepage}

\setcounter{tocdepth}{2} % only sections and subsections 
\tableofcontents 

\newpage 

\section{Introduction}
A symplectic homogeneous manifold $(M=G/H,\o )$ is a symplectic manifold $(M,\o)$ which admits a transitive Lie group $G$ of symplectic automorphisms. Symplectic homogeneous manifolds have been studied by many authors, see for instance 
\cite{ZB} for a classification of compact symplectic homogeneous 
manifolds. Every symplectic homogeneous manifold of a given 
connected Lie group $G$ can be described as 
follows: %  \cite{Chu}:  
Let $\o \in Z^2(\gg )$ be a two-cocycle on the Lie 
algebra $\gg = Lie\, G$. Then $\gh= \ker \o \subset \gg$
is a Lie subalgebra, which is contained in the stabilizer 
$\gg_{\o}$ of $\omega$ under the coadjoint action of $\gg$ on forms. Let $G_\o$ be the stabilizer of $\omega$ under the coadjoint action of $G$ and assume that there exists a closed subgroup $H\subset G_\o$  with Lie algebra 
$\gh$. Then  $M=G/H$ is a smooth
manifold and $\o$ (considered as a two-form on $\gg/\gh \cong T_oM$, where $o=eH$ is the canonical base point) 
uniquely extends by the $G$-action to an invariant 
symplectic form $\o$ on $M$. 
Notice that if $\gg$ is perfect, that is,  if 
$[\gg ,\gg ] = \gg$, then the Lie subgroup $H\subset G$ generated 
by $\gh$ is closed. In fact, in that case $H=(G_\o)_0$. This 
happens, in particular, for semisimple Lie groups $G$,
where all symplectic homogeneous  manifolds $(M=G/H,\o )$ are 
coadjoints orbits in $\gg^*$, which are endowed with their canonical symplectic form (which is unique up to scale). On the other hand, comparatively  little is known about 
symplectic homogeneous manifolds of non-reductive groups $G$. 

In this article we will concentrate on % the case of
 \emph{symplectic Lie groups},  that is, on symplectic homogeneous manifolds 
with trivial stabilizer $H$. Thus a symplectic Lie group $(G,\o )$ is a Lie group $G$ endowed with a left-invariant symplectic form 
$\o$. 
The most important case is that of solvable Lie groups. 
In fact, it is known  that unimodular symplectic Lie groups 
are \emph{solvable}, see  \cite[ Thm.\ 11]{Chu}. For example, 
if  $G$ 
admits a \emph{cocompact} discrete subgroup $\Gamma$  then 
$G$ belongs to this class. In this case, the coset-manifold 
$\Gamma \, \backslash \, G$, with symplectic structure induced 
by $\omega$ is a compact symplectic solvmanifold. Symplectic manifolds of this type have been of intense interest because they provide a rich source of non-K\"ahler compact symplectic manifolds \cite{Thurston, BensonGordon,BC2}.
Another important motivation to study 
symplectic homogeneous spaces of solvable Lie groups
comes from their role in the theory of geometric quantization 
and the unitary representation theory of $G$ as is apparent  in the 
celebrated work of Kostant, Kirillov and Souriau (see, 
\cite[Lecture 9]{Weinstein2} for a discussion).

 A recurring theme in the study of symplectic Lie groups is their interaction with \emph{flat} Lie groups. A {flat Lie group} is a Lie group endowed with a left-invariant torsion-free flat connection. For example, it is well known that 
every symplectic Lie group $(G,\o)$ 
carries  a torsion-free flat connection $\nabla^\o$ on $G$, which
comes associated with the symplectic structure \cite[Theorem 6]{Chu}.
Furthermore, flat Lie groups arise as quotients
% in the process of symplectic reduction 
with respect to Lagrangian normal subgroups of symplectic Lie groups \cite{Bo},  and, more generally, in the context of reduction with respect to isotropic normal subgroups satisfying certain extra assumptions  \cite{DM1}. Another noteworthy appearance of flat Lie groups is as Lagrangian subgroups of symplectic 
Lie groups with the induced Weinstein connection.
%  \cite{Weinstein1}. 

These facts, which arise in the
relatively rigid and restricted category of symplectic Lie groups,
% stem from
mirror more general constructions for arbitrary
symplectic manifolds, in particular from 
% , which stem from  
the theory of Lagrangian and isotropic foliations 
\cite{Dazord, Vaisman, Weinstein1, Weinstein2}.  
%where
%the associated flat connections usually 
%arise from the canonical flat connection in the normal bundle associated to a foliation. 
In fact, every \emph{Lagrangian subgroup}  $L$ of a symplectic Lie group
$(G, \o)$ defines a left-invariant Lagrangian foliation on $(G, \o)$. 
Therefore, a Lagrangian subgroup gives a left-invariant
\emph{polarization} in the sense of geometric quantization \cite[Def.\  4.5.1]{Woodhouse}. 
 An important role is played by  the 
method of  \emph{symplectic reduction} \cite{Weinstein2} adapted to the setting of symplectic Lie groups. Every normal isotropic subgroup of $(G, \o)$  determines a coisotropic subgroup  whose symplectic reduction is a symplectic Lie group. 
As in the general theory, foundational questions in the context of symplectic Lie groups are the \emph{existence problem} for Lagrangian and isotropic subgroups 
(in particular, for \emph{normal} subgroups of this type), %   in symplectic Lie groups, 
as well as associated \emph{classification} (that is, ``normal form'') problems for symplectic Lie groups with Lagrangian subgroups.
%in relation to the geometry of associated flat quotients and the properties of the associated reduced symplectic Lie groups. 

There are several important contributions to the structure theory \cite{DM1, DM2, MR1, O2, E1} and considerable classification work for symplectic Lie groups in low dimensions \cite{Ovando,KGM,GJK,Macri,Salamon,E2,CV,O3}. However, the general picture concerning the above questions seems far from complete, 
with some of the main conjectures or research hypotheses remaining unverified in the literature. 
This concerns, in particular, the existence question for Lagrangian \emph{normal} subgroups in completely solvable or nilpotent groups (first raised in \cite{Bo}), % a general existence criterion for 
the existence of Lagrangian subgroups in arbitrary symplectic Lie groups (with partial results given in \cite{DM1,DM2,EO}), and the % general 
theory of reduction %  in symplectic Lie groups 
with respect to general isotropic normal subgroups (where it remains to extend the approach pursued  in \cite{DM1,MR1}), as well as various related or more specific questions (for example, in the context of nilpotent symplectic Lie groups  as in  \cite{Guan} or \cite{GJK,Million,DottiTirao}). 

A natural class of symplectic Lie groups arises from  
Lie groups for which the 
coadjoint representation has an open orbit.  The corresponding 
Lie algebras are called \emph{Frobenius Lie algebras} 
\cite{O1}. There are many constructions and classification results for Frobenius Lie algebras \cite{O2,E1,E2,EO,CV,O3}. 
In the Frobenius case the canonical symplectic structure on the open coadjoint orbit induces 
a left-invariant symplectic structure on the Lie group, which is the differential of
a left-invariant one-form. Therefore, such a Lie group is never unimodular and, in particular, never nilpotent. 

The principal aim of our article is to further develop the structure theory of symplectic Lie groups and to shed some more light on the aforementioned circle of ideas. We describe now the contents of the article and its main results. Let us first remark though, that, for simply connected $G$, the study of symplectic Lie  groups reduces to the study of  \emph{symplectic Lie algebras} $(\gg, \omega)$, that is, Lie algebras $\gg$ endowed with a non-degenerate 
two-cocycle $\o \in Z^2(\gg)$. Therefore, most of our results (with the noteworthy exception of Section \ref{sect:cotangent_groups}) will be derived and formulated in the Lie algebra setting. 
The article is divided into three main parts which we will discuss subsequently now.  

\subsection*{Complete reduction of symplectic Lie groups}
In the first part of the article we study the concept of \emph{symplectic reduction} of symplectic Lie groups with respect  to their isotropic normal subgroups. %  from a general point of view. 
In the basic construction we can perform symplectic reduction of any coisotropic subgroup of $(G,\o)$  to obtain a reduced symplectic manifold of \emph{lower} dimension than the dimension of $G$.  If the coisotropic subgroup arises as a symplectic orthogonal of a normal isotropic subgroup $J$, the reduced symplectic manifold carries the induced structure of a symplectic Lie group. Strictly speaking, the reduced manifold is possibly defined only locally, but in the context of symplectic Lie groups the construction always determines a unique simply connected symplectic Lie group. 
This symplectic Lie group will be called the \emph{symplectic reduction of $(G, \o)$} with respect to the isotropic normal subgroup $J$.  
On the Lie algebra level symplectic reduction is well defined and corresponds to studying the reduction of  symplectic Lie algebras with respect to isotropic ideals. 

We will derive fundamental properties of the procedure of symplectic reduction in Section \ref{sect:basic_concepts}.  In particular, we describe the role of flat Lie groups in symplectic reduction, a theme which will be important, in particular, in the second part of the article. Another important idea which will be studied in Section \ref{sect:basic_concepts} is the concept of \emph{symplectic oxidation}, that is, in principle, the construction of new symplectic groups, which are reducing to a given
symplectic Lie group $(G, \o)$, by attaching suitable 
extra data to $(G, \o)$.    

\paragraph{Reduction sequences and irreducible symplectic Lie groups}
If a symplectic Lie group does not admit any proper isotropic normal subgroup, it will be called an \emph{irreducible} symplectic Lie group. Since the dimension decreases in every possible reduction step, every symplectic Lie group admits a finite sequence of reductions to an irreducible symplectic Lie group. 
Every irreducible symplectic Lie group which terminates a reduction sequence arising from the symplectic Lie group $(G, \o)$ will be called a \emph{symplectic base} of $(G, \o)$. 
If it admits a \emph{reduction sequence} to the trivial group, that is, if the trivial group is a symplectic base, $(G, \o)$ will be called \emph{completely reducible}. We will show in Section \ref{sect:cr_algebras} that every completely solvable symplectic Lie group belongs to this class.\footnote{The nilpotent case is also a consequence of \cite[Th\'eor\`eme 2.5]{MR1}.} 
However, the property of complete reducibility is not confined to solvable symplectic Lie groups;  indeed, the affine Lie group with its canonical symplectic structure \cite{Agaoka} is completely reducible, see Example \ref{ex:affine}. Moreover, \emph{Lagrangian extensions} of flat Lie groups, which will be studied in the second part of this article, are always completely reducible. Therefore, \emph{non-solvable} completely reducible symplectic Lie groups can also be constructed by Lagrangian extension of flat Lie groups which are reductive\footnote{For examples of such, see \cite{Ba_LS}.}. 

Completely reducible symplectic Lie groups have particular tractable properties because they are susceptible to inductive arguments based on symplectic reduction. For example, 
as we deduce in Corollary \ref{cor:Lagr_subalgs2}, every completely reducible symplectic Lie group has a Lagrangian subgroup. 
%As a consequence, every completely reducible symplectic Lie group admits a left-invariant
%polarization (in the sense of geometric quantization \cite[Def.\  4.5.1]{Woodhouse}). 
%

Naturally, the \emph{classification}  problem for \emph{irreducible}  symplectic Lie groups arises as an important question, which we discuss now. Building on a previous related investigation in \cite{DM2}, we will  show in Section \ref{sect:irreducible} that the structure of irreducible symplectic Lie groups is surprisingly simple. Indeed every irreducible symplectic Lie group is metabelian and has the additional structure of a flat K\"ahler Lie group\footnote{See \cite{LM,DM2} for this notion.}. In particular, all irreducible symplectic Lie groups are solvable of very restricted type. The complete local classification of irreducible symplectic Lie groups is described in Corollary \ref{cor:irreducible}. As the classification shows, the first non-trivial example of an irreducible symplectic Lie group arises in dimension six. 

\paragraph{Uniqueness of the symplectic base}
Our main result on symplectic reductions is a Jordan-H\"older type  \emph{uniquess theorem} for the symplectic base. Namely, as we will show in Theorem 
\ref{thm:base_unique},  the isomorphism class of the simply connected symplectic base of $(G, \o)$ is independent of the chosen reduction sequence. 
Therefore, the symplectic base is uniquely defined and it is an invariant of $(G, \o)$. A derived numerical invariant is the \emph{symplectic length} of $(G,\o)$, which denotes the minimal length of a reduction sequence to the irreducible base. 

\paragraph{Existence of Lagrangian subgroups}
Certain properties of $(G, \o)$ may be deduced from the properties of its symplectic base using induction over a reduction sequence. Using this approach, we prove in Proposition \ref{prop:Lagr_subalgs1} that a symplectic Lie group $(G, \o)$ admits a Lagrangian subgroup if and only if its symplectic base admits a Lagrangian subgroup. The latter result, in a sense, gives a complete solution to the existence problem for Lagrangian subgroups by effectively constricting it to the irreducible case. 

Remarkably, this immediately implies that completely reducible symplectic Lie groups do always admit a Lagrangian subgroup.  In particular, completely solvable symplectic Lie groups have Lagrangian subgroups. 
Moreover,  Lemma \ref{lem:Lag_irr} implies that in dimension less than or equal to six every symplectic Lie group \emph{has} a Lagrangian subgroup.\footnote{Contradicting \cite[Th\'eor\`eme 3.8]{DM2}, see Section \ref{sect:irr_Lag} for further remarks.} We next employ the structure theory for irreducible symplectic Lie groups as developed in Section \ref{sect:irreducible} to construct an eight-dimensional symplectic Lie group which does \emph{not}  have a Lagrangian subgroup, see Proposition \ref{prop:irr_noLag}.  
To our knowledge, 
this is the first example 
which shows that there do exist symplectic Lie groups without Lagrangian subgroups at all.

\subsection*{Cotangent Lie groups and Lagrangian extensions}
 An important class of examples of completely reducible symplectic Lie groups are those which arise as 
cotangent bundles $(T^*H,\Omega)$ of \emph{flat Lie groups} $(H,\nabla)$, that is,  of Lie groups
$H$ endowed with a left-invariant flat torsion-free connection $\n$. 
Here $\Omega$ denotes  the standard symplectic structure on the cotangent bundle and the 
Lie group structure on $T^*H$ 
is given by identification of $T^*H$ with a semi-direct product  group $G=H\ltimes_\varrho \gh^*$, where $\gh = \mathrm{Lie}\, H$ is the Lie algebra of $H$ and $\varrho$ is a representation of $H$ on the dual vector space $\gh^*$. If $\Omega$ is left-invariant 
with respect to this Lie group structure, $(G, \Omega)$ is a symplectic Lie group, which will be called a \emph{cotangent symplectic Lie group}. 

Generalizing partial results obtained in \cite{Chu} and \cite{Lic}, we prove in Section \ref{sect:cotangentLGs} % Proposition \ref{prop:omegali} 
that $(G, \Omega)$ is a symplectic Lie group  if and only if 
the dual of the group representation $\varrho: H \ra \mathrm{GL}(\gh^*)$  induces a Lie algebra 
representation $\n:  \gh \ra \mathfrak{gl}(\gh )$, which is 
defined by a left-invariant flat torsion-free connection $\nabla$ on  $H$. This fact exemplifies  the appearance of flat Lie groups in the construction of symplectic Lie groups.
%  in a particular transparent  manner. 
For example, as observed  in \cite[Thm.\ 4.1 and Prop.\ 4.2]{O2} (see also \cite[Thm.\ 3.5.1]{Bo}), there is a one-to-one correspondence between
Frobenius  Lie algebras of dimension $2n$ splitting over an abelian ideal of dimension $n$ 
and $n$-dimensional linear Lie algebra representations which are \'etale. The corresponding symplectic Lie groups
are examples of cotangent symplectic Lie groups and the flat torsion-free connection mentioned 
above is induced by the \'etale representation. 

\paragraph{Lagrangian subgroups and flat Lie groups}
In a cotangent symplectic Lie group $(G, \Omega)$, $H$ becomes a Lagrangian subgroup and  $\gh^*$ is a normal Lagrangian subgroup.  As will be derived in Section \ref{sect:Lag_subgps}, % We prove in Proposition \ref{prop:} that 
 the flat connection $\nabla$ associated to the representation $\varrho$ coincides with the Weinstein connection induced on the Lagrangian subgroup $H$ of $(G, \Omega)$.
%  whereas the abelian Lagrangian normal subgroup $ \gh^*$ carries the canonical Riemannian left-invariant flat connection. 
The construction thus allows to transport a theorem of Weinstein \cite{Weinstein1} on leafs of Lagrangian foliations to the category of symplectic Lie groups, showing that every flat Lie group whose connection has trivial \emph{linear holonomy} group (for example, every simply connected flat Lie group) can be realized as a Lagrangian subgroup of a symplectic Lie group. This will be proved in Theorem \ref{thm:WeinsteinLG}. 

\paragraph{Lagrangian extensions of flat Lie groups}
More generally, given a flat Lie group $(H, \nabla)$, we can deform the associated group structure on $T^{*} H$ (and possibly also the symplectic structure) to construct symplectic Lie groups
 $(T^* H = G, \omega)$, which have the property  that $\gh^*$ is a normal Lagrangian subgroup of $G$.  In particular, $\gh^*$ is a totally geodesic subgroup of $(T^*H, \nabla^\omega)$.
The corresponding symplectic groups are thus characterized by the requirement that  the natural maps 
 $$  \gh^* \ra (T^*H, \nabla^\omega)  \ra (H, \nabla)$$
are connection preserving. We call a symplectic Lie group $(T^* H = G, \omega)$ of this type 
a \emph{Lagrangian extension} of the flat Lie group $(H, \nabla)$. 

In Section \ref{sect:Lagrange_ext} we develop the theory of Lagrangian extensions on the Lie algebra level. 
Our principal result, Theorem \ref{thm:Lagrangian_corresp}, shows that the isomorphism classes of Lagrangian extensions of a flat Lie algebra $(\gh,\nabla)$  are classified (in a way analogous 
to the classical extension theory of Lie algebras) by a suitable two-dimensional cohomology group $H^{2}_{L,\nabla}(\gh, \gh^{*})$,
which we call the \emph{Lagrangian extension group} for 
$(\gh,\nabla)$. 
  
In an intermediate step we show that the extension cocycles representing elements of the group $H^{2}_{L,\nabla}(\gh, \gh^{*})$ classify \emph{strongly polarized} symplectic Lie algebras. A strongly polarized symplectic Lie algebra % $(\gg, \o, \ga, V)$  
is a symplectic Lie algebra with a fixed Lagrangian ideal and a complementary Lagrangian subspace. The reader should consult Section \ref{sect:Lfunctoriality} for the precise definition of the relevant concepts and the corresponding results.  

The classification of Lagrangian extensions sets up a one-to-one correspondence of symplectic Lie groups with Lagrangian normal subgroups on the one hand,  and, on the other hand, flat Lie groups together with extension classes in their  Lagrangian extension cohomology group, see Corollary \ref{cor:Lagrangian_corresp}. 
Concerning the computation of the extension group $H^{2}_{L,\nabla}(\gh, \gh^{*})$, we point out that this group has a natural homomorphism to the ordinary extension group $H^{2}_{\nabla}(\gh, \gh^{*})$. However, this homomorphism is (as we discuss in Section \ref{sect:Lcohomo_comp}) not necessarily an injective\footnote{This shows that the statement of \cite[Theorem 3.2.1]{Bo} is not correct.} nor a surjective map.

Finally, it is an important observation that every simply connected symplectic Lie group which has a Lagrangian normal subgroup   
is indeed a Lagrangian extension of a flat Lie group \cite{Bo}. 
It also has been claimed\footnote{See 
\cite[top of p.\ 1164, preceding Theorem 3.5.2]{Bo}.} that \emph{all} simply connected completely solvable 
symplectic Lie groups arise as Lagrangian extensions of flat Lie groups. If this assertion were true,  it would essentially 
reduce the study of a huge class of symplectic Lie groups to 
that of flat Lie groups. The claim relies on the 
equivalent statement 
%This statement is not true and neither  is Lemma 3.4.2.\ on p.\ 1164 of \cite{Bo}, on which its proof is based. 
that every completely solvable symplectic Lie algebra
admits a Lagrangian ideal. It follows from the work in
\cite{Ovando} that every four-dimensional completely solvable 
symplectic Lie algebra has a Lagrangian ideal; on the other hand,  there exists a four-dimensional solvable Lie algebra of imaginary type which does not have a Lagrangian ideal. The existence question for Lagrangian ideals is our next main topic. 
%
% we may 
%consider symplectic Lie groups $(T^*H = G, \omega)$ such 
%that $ \gh^*$ is a Lagangian normal subgroup of $G$ such that
%the bundle projection $T^*H \ra H$ natural projection homomorphisms cps such that 
% $T^*H$
%by a two-cocycle on $\gh$ with values in $\gh^*$, with respect to 
%the $\gh$-module structure on the vector space $\gh^*$ associated with $\rho$.  

\subsection*{Existence of Lagrangian normal subgroups}
%%%%%%%%%%%%%%%%%%%%%%%%%%%%%%%%%%%%%%%%%%
%%%%%%%%%%%%%%%%%%%%%%%%%%%%%%%%%%%%%%%%%%
In the third part of the article we study the existence of Lagrangian ideals in completely solvable and nilpotent Lie algebras. We are 
particularly interested in the nilpotent case, since all nilpotent symplectic Lie algebras admit reduction with respect to central isotropic ideals (see \cite[Th\'eor\`eme 2.5]{MR1} or Proposition \ref{prop:nilpotent_reduction}). It follows that nilpotent symplectic Lie algebras may be constructed inductively by subsequent \emph{symplectic oxidation}
% \footnote{named ``double extension''  in \cite{MR1}}
starting from the trivial symplectic Lie algebra. (We explain the oxidation 
procedure in Section \ref{sect:sympl_ox} in detail.)
Therefore,  nilpotent symplectic Lie algebras can be 
expected to have particular accessible properties. 

Before describing some of our results concerning Lagrangian ideals, we would
like to mention the closely related notion of  commutative polarization, which  in 
the context Frobenius Lie algebras is the same as an abelian Lagrangian subalgebra with
respect to some exact symplectic form.  As is proved  in [5, Thm. 4.1],  for solvable Frobenius 
Lie algebras over algebraically closed fields of characteristic 0, the existence of a commutative polarization implies 
the existence of a commutative polarization, which is also an ideal.  In particular, under these
assumptions an exact symplectic Lie algebra which has an abelian Lagrangian subalgebra
also admits a Lagrangian ideal. Unfortunately, for general symplectic Lie algebras, the existence of 
an abelian Lagrangian subalgebra bears no direct implication for the existence of Lagrangian
ideals, see Remark on page \pageref{RemarkCP}.

\paragraph{Counterexamples} 
As our starting point we construct two examples of symplectic
Lie algebras in dimension six, and dimension eight,  respectively,
which show that Lagrangian ideals in completely solvable, even 
in nilpotent symplectic Lie algebras do not necessarily exist. 
Our first example, Example \ref{ex:noLag_cs6},  is a completely solvable non-nilpotent
symplectic Lie algebra of dimension six whose maximal isotropic ideal 
has dimension two. Our next example, Example \ref{ex:noLag_n8}, is a four-step nilpotent 
symplectic Lie algebra of dimension eight, whose maximal isotropic ideal 
has dimension three. These examples show (as we believe, for
the first time) that, in general, completely solvable and nilpotent 
symplectic Lie groups do not admit Lagrangian normal subgroups.
The theory of Lagrangian extensions, as developed in the second part of this article, can only describe the proper subclass of completely solvable symplectic Lie groups which do admit 
Lagrangian normal subgroups. We will restrict our further considerations to the nilpotent case and analyze this
situation more closely. 
Our findings suggest that nilpotent symplectic Lie groups are very far from being understood, even if flat Lie groups were well understood. 

\paragraph{Symplectic rank and symplectic length}
The fact that Lagrangian ideals do not always exist leads us to define the following invariant of a symplectic Lie algebra $(\gg ,\o )$. The \emph{symplectic rank} 
$\s (\gg ,\o )$ is the maximal dimension of an isotropic ideal. 
In particular, $\s (\gg ,\o )$ is maximal if $\s(\gg ,\o ) = { 1 \over 2} \dim \gg$ and $(\gg, \o)$ has a Lagrangian ideal.  Observe also that   
the existence of a Lagrangian ideal in $(\gg ,\o )$ 
is equivalent to the fact that 
$(\gg, \o)$ can be reduced to the trivial algebra in one step, that is,
this holds if and only if the \emph{symplectic length} satisfies $sl(\gg, \o) = 1$. 
From this point of view, the above examples show that symplectic rank and length are important invariants in the context of nilpotent symplectic Lie groups. 

\paragraph{Existence results and a further counterexample}  
We would like to understand the class of 
nilpotent symplectic Lie algebras which admit a Lagrangian ideal 
as good as possible and give criteria for the existence 
of Lagrangian ideals. We  describe now our main results in this direction.

As a basic observation we show that every two-step nilpotent symplectic Lie algebra admits a Lagrangian ideal. 
This is proved in Theorem \ref{thm:2step}. Since Example \ref{ex:noLag_n8} is four-step nilpotent, one could 
hope that existence of Lagrangian ideals is satisfied also for three-step nilpotent Lie algebras. However, in Section \ref{3-stepcounterEx} we construct a three-step nilpotent Lie algebra of dimension ten which has symplectic rank four. Thus this symplectic Lie algebra has no Lagrangian ideal.

To round up the theory of low-dimensional nilpotent symplectic Lie algebras, we prove next that the 
dimensions in which the previous counterexamples occur are sharp. By Theorem \ref{thm:dimlt8}, every nilpotent symplectic Lie algebra  of dimension less than eight admits a Lagrangian ideal. Moreover, as is proved in Theorem \ref{thm:red38}, every three-step nilpotent Lie algebra  of dimension less than ten admits a Lagrangian ideal.

Recall that a nilpotent Lie algebra is called \emph{filiform} if its nilpotency class is maximal with respect to its dimension, and that such Lie algebras are generic in the variety of nilpotent Lie algebras. 
We show in Theorem \ref{thm:ff_Lagrangian} that every  filiform nilpotent symplectic Lie algebra has a Lagrangian ideal. Since this
ideal is also unique, the Lagrangian extension theory for such algebras takes a particular simple form and it leads directly 
to a classification theory of filiform nilpotent symplectic Lie algebras,
which can be based on a classification of filiform flat symplectic Lie algebras, see Corollary \ref{cor:filiform_corresp}.  Somewhat strikingly, the latter results also show that \emph{the existence of Lagrangian ideals is a generic property for nilpotent symplectic Lie algebras}. 

\paragraph{Reduction theory of nilpotent symplectic Lie algebras}
Symplectic reduction is one of the main
methods in our study of nilpotent symplectic Lie algebras. The basic properties of reduction,
which are relevant to this  part of our article are developed in Sections \ref{sect:reduction} - \ref{NormalSubsect}:  
The symplectic reduction of a symplectic Lie algebra $(\gg , \o )$ with respect to an isotropic ideal $\gj$ 
is a symplectic Lie algebra  
$(\bar\gg,\bar\o)$ of dimension $\dim \gg -2\dim \gj$. 
Depending on the properties of $\gj$ it comes with additional
algebraic structures. In the important case where 
$\gj^\perp\subset \gg$ is an ideal, the additional 
structure is essentially encoded in 
a Lie subalgebra $\gq \subset \Der (\bar\gg)$, the elements
of which are subject to a system of quadratic equations which
depend on $\bar \o$. The condition on $\gj^\perp\subset \gg$ 
is satisfied, in particular, 
when reducing with respect to a central isotropic ideal. 
This situation thus plays a mayor role in the nilpotent case. 

We initiate the study of  nilpotent endomorphism algebras 
$\gq$ of the above type in Section \ref{sect:qalgebras}, which
is at the heart of our results in this part of the article.    
As a particular application of this theory, we prove, for instance,
that any nilpotent symplectic  Lie algebra $(\gg ,\o )$, which 
has an abelian reduction $\bar\gg$ with respect to a one-dimensional central ideal, has a Lagrangian ideal, see Theorem \ref{thm:onedimabelian_reduction}. The analogous fact for reduction with respect to higher-dimensional central ideals fails. This is a basic observation for the construction of our ten-dimensional three-step nilpotent Example in Section \ref{3-stepcounterEx} without Lagrangian ideal.  
Furthermore,  the proofs
of Theorem \ref{thm:dimlt8} and Theorem \ref{thm:red38} 
depend crucially on the results in Section \ref{sect:qalgebras}.   

\paragraph{Further considerations and examples}
We would like to conclude the introduction to the third part of
our article with some additional remarks. 

The first remark concerns the notion of symplectic rank for nilpotent symplectic Lie algebras. The importance of the invariant $\s (\gg ,\o )$  is underlined by the fact that every isotropic ideal is abelian, 
see Lemma \ref{lem:normal_red}.   
Hence,  $\s (\gg ,\o )$ is bounded by the maximal dimension $\mu (\gg )$ of an 
abelian ideal in $\gg$:
\[ \s (\gg ,\o ) \le \mu (\gg ). \]  
Obviously, $\mu (\gg )$ is in turn bounded by the maximal dimension
$\nu (\gg)$ of an abelian subalgebra:
\[ \mu \le \nu .\] 
An invariant related to  
$\nu$  has been studied in detail  
by Milenteva \cite{Mil,Mil2,Mil3}. 

Since every two-step nilpotent symplectic Lie algebra $(\gg ,\o )$ 
admits a Lagrangian ideal, these Lie algebras satisify % and, hence,  
\be \mu (\gg )\ge \frac{1}{2}\dim \gg \label{muEqu} .\ee
Therefore, two-step nilpotent Lie algebras violating \re{muEqu} 
do not admit any symplectic structure. This is already a strong restriction. For instance, the existence of
infinitely many two-step nilpotent Lie algebras 
violating 
\be \nu (\gg )\ge \frac{1}{2}\dim \gg \label{nuEqu}\ee
and, hence, \re{muEqu} 
follows from  \cite[Lemma 11]{Mil3}. 

Note, however, that Lagrangian subalgebras of $(\gg, \o)$ 
do not  lead to further information on the invariant $\nu (\gg)$.
We explained before  that every  
nilpotent symplectic Lie algebra 
$(\gg ,\o )$ has a Lagrangian 
subalgebra, see Corollary \ref{cor:Lagr_subalgs2}. 
However, contrary to Lagrangian
ideals, Lagrangian subalgebras are not necessarily abelian. 
In fact, it follows from the results in Section \ref{sect:Lag_subgps}, 
that every nilpotent flat Lie algebra whose flat connection is \emph{complete} arises as a Lagrangian subalgebra of a nilpotent symplectic Lie algebra. 

Finally, we would like to point out that based on our examples and the theory developed in this work we already disproved several claims and conjectures, which play a role in the literature. Another application of
this kind concerns the following question: It was asked in \cite{Guan}, whether, given any compact symplectic solvmanifold which may be presented as a coset-space $\Gamma \backslash G$, where $G$ is a solvable Lie Group and $\Gamma \leq G$ is a lattice, $G$ can be at most three-step solvable. This assertion has  been verified in the literature mostly in low dimensions. For example, it holds in dimension less than or equal to six by \cite{Macri, Ovando}. 
We answer this question in the negative by constructing in Example \ref{ex:uppertriangquot} a family of symplectic nilmanifolds, which % , quite to the contrary, 
has unbounded solvability degree. However, the lowest dimensional example of a symplectic solvmanifold with solvability degree greater than three, which we can construct, satisfies $\dim \Gamma \backslash G = 72$. In general,
this series of examples implies that, quite to the contrary of the original conjecture,  \emph{the solvability degree of symplectic solvmanifolds is unbounded} with increasing dimension.

%%%%%%%%%%%%%%%%%%%%%%%%%%%%%%%%%%%%%%%%%%%

% \tableofcontents

%%%%%%%%%%%%%%%%%%%%%%%%%%%%%%%%%%%%%%%%%%%
%%%%%%%%%%%%%%%%%%%%%%%%%%%%%%%%%%%%%%%%%%%
\subsection*{Notation and conventions} 

\paragraph{Ground fields.}
For our results on symplectic Lie algebras, if not mentioned 
otherwise, we work over a fixed unspecified ground field $k$ 
of characteristic $0$. Global geometric interpretations
for simply connected Lie groups may, of course, be given over the
field $k = \bR$ of real numbers, which therefore is of principal interest 
for  our investigations.  
For a vector space $V$ and subset $A \subseteq V$, we let $\spanof{A}$ 
denote the span of $A$ (over $k$).   
\paragraph{Lie algebras.}
Let $\gg = (V, [ \, , \,  ])$ be a Lie algebra.
Here $V$ is the underlying vector space of $\gg$ 
and $ [ \, , \,  ]$ is a Lie bracket on $V$. 
For subsets $A, B \subseteq \gg$, the \emph{commutator} $[ A, B ]$ is the vector subspace which is generated by all brackets $[ a, b ]$, where 
$a \in A$, $b \in B$.

We denote with $\Der(\gg) \subset \End(V)$ the Lie algebra of
derivations of $\gg$ and with $ \Inn(\gg)$ the image of $\gg$ under the \emph{adjoint representation} $\ad: \gg \ra  \Der(\gg)$,
where $\ad(u) v = [ u, v ]$, for all $u,v \in \gg$.  
The Lie algebra $\out(\gg) = \Der(\gg)/ \Inn(\gg)$ is
called the Lie algebra of \emph{outer derivations}.  

The {\em derived series}  $D^{i} \gg$ of ideals in $\gg$ 
satisfies $D^0 \gg = \gg$, and $D^{i+1} \gg = [ D^{i} \gg,  D^{i} \gg ]$. We say that $\gg$ has \emph{derived length}, or solvability degree $s(\gg)$ if $D^{s(\gg)} = \{ 0 \}$ and $D^{s(\gg)-1} \neq \{ 0 \}$. Define a Lie algebra to be \emph{completely solvable} if it admits a complete flag of ideals. 

% \noindent 
Define the \emph{descending central series} $C^{i} \gg$ of ideals in $\gg$ inductively 
by $$  C^0 \gg = \gg , \, C^{i+1} \gg = [ \gg, C^{i} \gg ] \; . $$ Dually, the \emph{ascending central series} $C_{i}\,  \gg$, $ i \geq 0$, of ideals is defined by   $$ C_0\,  \gg = \{ 0 \} , \, \;  C_{i+1}\,  \gg = \{ v \mid [\gg, v ] \subseteq \, C_{i} \, \gg \, \} \; . $$
In particular, $C_{1}\,  \gg = \Z(\gg) = \{  v \mid [v, \gg] = 0\}$ is the \emph{center} of $\gg$. 

\noindent
We observe the relations % \marginpar{\bf !!!!}
\begin{equation} \label{eq:cs1}
[ C^{i} \gg, C^{j} \gg ]  \subseteq  C^{i+j+1} \gg \; , \\
\end{equation}  
\begin{equation} 
\label{eq:cs2}
 [ C^{i} \gg, C_{\ell}\,  \gg ]  \subseteq C_{\ell-i-1}\,  \gg \; \, . 
 \end{equation}  

The Lie algebra $\gg$ is called \emph{$k$-step nilpotent} if $C^k \gg = \{ 0 \}$, for some $k \geq 0$. 
More precisely, we say that  $\gg$ is of \emph{nilpotency class} $k$ if $C^k \gg = \{ 0 \}$ and $C^{k-1} \gg \neq \{ 0 \}$. 
If $\gg$ is of nilpotency class $k$, we have % the inclusion 
\begin{equation} 
\label{eq:cs3}
C^{k-i} \gg \, \subseteq C_{i} \, \gg \; .
 \end{equation}  
%\begin{equation} C^{k-i} \gg \subseteq C_{i} \gg \; .
%\end{equation} 

\paragraph{Low dimensional cohomology of Lie algebras.}
Let $\varrho: \gg \ra \End(W)$ be a representation of $\gg$ on a vector space $W$. The elements of ${\mathcal C}^{i}(\gg,W) = \Hom(\bigwedge^{i} \gg, W) $ % = (\bigwedge^{i} \gg^*)  \tensor W$ 
are called $i$-cochains for $\varrho$. We have the coboundary operators $$ \partial =  \partial^{i}_{\varrho}: \; {\mathcal C}^{i}(\gg,W) \ra {\mathcal C}^{i+1}(\gg,W)$$  of Lie algebra cohomology for $\varrho$ 
(see \cite{HS}),  satisfying $\partial \, \partial = 0$.  The elements of 
$$ Z^{i}_{\rho}(\gg,W) = \{ \alpha \in {\mathcal C}^{i}(\gg,W) \mid 
 \partial^{i}_{\varrho} \,  \alpha = 0 \}$$ are called $i$-cocycles and
the elements of  $$ B^{i}_{\rho}(\gg,W) = \{ \partial^{i-1}_{\varrho} \, \alpha  \mid  \alpha \in  {\mathcal C}^{i-1}(\gg,W) \}$$  are called $i$-coboundaries. We write $Z^{i}(\gg)$, respectively $B^{i}(\gg)$, for these modules if $\varrho$ is the trivial representation 
on the ground field $k$. 
We need to consider $\partial$ only in low-degrees. 
Here we have, for all $u,v, w \in \gg$, $t \in W$: 
\begin{equation}  \label{eq:cocycles}
\begin{split}
 ( \partial^{0} t)  (u)  & = \rho(u) t \; , \\
 ( \partial^{1} \lambda) (u,v) & =  \rho(u) \lambda(v) - \rho(v) \lambda(u) - \lambda ([u,v]) \; ,  \\
  ( \partial^{2} \alpha) (u,v,w) &=   \rho(u) \alpha(v,w) + \rho(v) \alpha(w,u) + \rho(w) \alpha(u,v)  \\ 
  &   + \alpha(u, [v,w]) + \alpha(w, [u,v]) + \alpha(v, [w,u]) \; .
\end{split}
\end{equation}

\paragraph{Connections on Lie algebras.}
A bilinear map $\nabla: \gg \times \gg \ra \gg$, written as
$(u,v) \mapsto \nabla_{u} v$, is 
called a \emph{connection} on $\gg$. For all $u,v,w \in \gg$, 
the \emph{torsion} 
$T = T^\nabla$ is defined by $T(u,v) = \nabla_{u} v- \nabla_{v} u -
[u, v]$ and the \emph{curvature} $R= R^\nabla$ by 
$R(u,v) w  = \nabla_{u} \nabla_{v} w - \nabla_{v} \nabla_{u} w - \nabla_{[u,v]} w$. If $T= 0$ then the connection $\nabla$ is called \emph{torsion-free}. The curvature $R$ vanishes if and only if the map $\sigma^\nabla: u \mapsto \nabla_{u} \; $, $\gg \ra \End(\gg)$ is a representation of $\gg$ on itself. In this case,  the connection $\nabla$ is called \emph{flat}. A flat connection is torsion-free if and only if the identity of $\gg$ is a one-cocyle in 
$Z^1_{\sigma^\nabla}(\gg, \gg)$.  A
\emph{flat Lie algebra} is a pair $(\gg, \nabla)$, where $\nabla$ is a torsion-free and flat connection on $\gg$. A subalgebra $\gh$ of $\gg$ is called a \emph{totally geodesic subalgebra} with respect to a connection $\nabla$ if, for all $u,v \in \gh$, $\nabla_{u} v \in \gh$. In particular,
if $(\gg, \nabla)$ is a flat Lie algebra and $\gh$ is totally geodesic, 
then $\nabla$ restricts to a connection on $\gh$, which is  torsion-free and flat.

\paragraph{Symplectic vector spaces.}
Let $\omega \in \bigwedge^2 V^*$ be a non-degenerate alternating form. The pair $(V, \omega)$ is called a \emph{symplectic
vector space}. 

Let $W \subseteq V$ be a subspace. The \emph{orthogonal} of $W$ in $(V, \omega)$ is $$W^{\perp_{\omega}} = \{ v ÿ\in V \mid \omega(v,w) = 0 \text{, for all } w\in W \} \; . $$

The subspace $W$ is called \emph{non-degenerate} if 
$W \cap W^{\perp_{\omega}} = \{ 0 \}$. 
% The subspace $W$ 
It is called \emph{isotropic} 
%if $\omega(v,w) = 0$, for all
%$v,w \in W$. That is, $W$ is isotropic 
if $W \subseteq W^{\perp_{\omega}}$. A maximal isotropic subspace is called 
\emph{Lagrangian}. A Lagrangian subspace satisfies 
$W = W^{\perp_{\omega}}$. 

A direct sum decomposition $V = V_{1} \oplus W \oplus V_{2}$ is called an \emph{isotropic decomposition} 
if  $V_{1}$, $V_{2}$ are isotropic 
subspaces and $W = V_{1}^{\perp_{\omega}}  \cap   V_{2}^{\perp_{\omega}}$.

The subspace $U$ is called \emph{coisotropic} 
%if $\omega(v,w) = 0$, for all
%$v,w \in W$. That is, $W$ is isotropic 
if $U^{\perp_{\omega}}  \subseteq U$. If $W$ is
isotropic then $W^{\perp_{\omega}}$ is coisotropic.   
For any coisotropic subspace $U$ in $(V, \omega)$, 
$\omega$ induces a symplectic form $\bar \o$ on 
$U/ U^{\perp_{\omega}}$. The symplectic vector
space $(U/ U^{\perp_{\omega}}, \bar \o)$ is called 
the \emph{symplectic reduction} with respect to the isotropic
subspace $ U^{\perp_{\omega}}$. 

For any isotropic subspace 
$W$, we define $\corank_{\o} W = {1 \over 2} \dim \, W^{\perp_{\omega}}/ \, W$. In particular, $W$ is Lagrangian if and only if 
$\corank_{\o} W = 0$.

\bl  \label{lem:corank} 
Let $U \subset (V,\o)$ be a coisotropic subspace and 
$W \subset (V,\o)$ an isotropic subspace. Then the image $\bar W$ of\/ $W \cap U$  in $(U/ U^{\perp_{\omega}}, \bar \o)$ is an isotropic subspace which satisfies 
$\corank_{\bar \o} \bar W \leq \corank_{\o} W$. 
\el

\paragraph{Symplectic Lie algebras.} A \emph{symplectic Lie algebra} is a pair $(\gg, \omega)$, where $\omega \in Z^2 (\gg)$ 
is a closed and non-degenerate two-form. The condition that $\omega$ is closed is equivalent to %the identity
\begin{equation} \label{eq:oclosed}
\omega([u,v], w) + \omega([w,u], v) + \omega([v,w], u) = 0 \; ,
\end{equation}
for all $u,v,w  \in \gg$.

\paragraph{Two-cocycles and derivations.}
Let $\alpha \in Z^2(\gg)$ be a closed two-form for the Lie algebra $\gg$
and $\varphi \in \Der(\gg)$ a derivation. Then the associated two-form 
$\alpha_{\varphi}$, which is defined by declaring 
 \begin{equation}
 \alpha_{\varphi}(v,w) = \alpha(\varphi v, w) + \alpha(v, \varphi w) \; \text{,  for all $v,w \in \ \gg$,} \label{eq:alphaphi}
\end{equation}
is a two-cocycle as well (that is, $\alpha_{\varphi} \in Z^2(\gg)$).

%%%%%%%%%%%%%%%%%%%%%%%%%%%%%%%%%%%%%%%%%%%
%  Part I , Theory of reduction
%%%%%%%%%%%%%%%%%%%%%%%%%%%%%%%%%%%%%%%%%%%

\part{Theory of reductionÚ} %  with respect to isotropic normal subgroups}

\section{Basic concepts} \label{sect:basic_concepts}
We introduce important notions in the
context of symplectic Lie algebras and study their basic 
properties. These will play a fundamental role in our approach to the subject. 
% and introduce several new concepts. 

\subsection{Isotropic ideals and symplectic rank} \label{sect:srank}
Let $(\gg, \o)$ be  a symplectic Lie algebra. An ideal
$\gj$ of $\gg$ is called an \emph{isotropic ideal} of $(\gg, \o)$ 
if $\gj$ is an isotropic subspace for $\omega$. If  the ortohogonal $\gj^{\perp_{\omega}}$ is an ideal in $\gg$ we 
call $\gj$ a \emph{normal isotropic ideal}. If $\gj$
is a maximal isotropic subspace $\gj$ is called 
a \emph{Lagrangian ideal}. \\

Using \eqref{eq:oclosed}, the following is easily verified: 

\bl \label{lem:normal_red}
Let $\gj$ be an ideal of $(\gg, \o)$.
Then \begin{enumerate}
\item If\/  $\gj$ is isotropic then $\gj$ is abelian. 
\item $\gj^{\perp_{\omega}}$ is a subalgebra of $\gg$.
\item $\gj^{\perp_{\omega}}$ is an ideal in $\gg$ if and
only if $[ \gj^{\perp_{\omega}}, \gj ] = \{ 0 \}$. 
\end{enumerate}
\el

\bd \label{def:symp_rank}
Let  $(\gg ,\o )$ be a symplectic Lie algebra. 
The \emph{symplectic rank} of $(\gg ,\o )$ is
the maximal dimension of any  isotropic ideal in $(\gg ,\o )$. 
\ed

The following dual definition is particularly useful in the context of symplectic reduction (compare Section \ref{sect:Lifting}): The \emph{symplectic corank} of $(\gg ,\o )$ is the corank of 
any isotropic ideal of maximal dimension. For example, 
 $\corank (\gg ,\o ) = 0$ if and only if $(\gg ,\o )$ has a Lagrangian ideal. 

\subsection{Symplectic reduction} \label{sect:reduction}
Let $(\gg, \o)$ be  a symplectic Lie algebra and  $\gj \subseteq \gg$ an isotropic ideal. The orthogonal 
$\gj^{\perp_{\omega}}$ 
is a subalgebra of  $\gg$ which contains $\gj$, and
therefore $\omega$ descends to a symplectic form 
$\bar \o$ on  % $\bar \gg  = \gj^{\perp_{\omega}} / \;  \gj 
the quotient Lie algebra 
$$\bar \gg = \gj^{\perp_{\omega}} / \;  \gj \; . $$

\bd
The symplectic Lie algebra $(\bar \gg, \bar \o)$ is called the \emph{symplectic
reduction} of $(\gg, \o)$ with respect to the isotropic ideal\/
$\gj$.
\ed

%If $(\gg, \o)$ reduces to $(\bar \gg, \bar \o)$, we can choose a 
%vector space decomposition $\gg = \gj^* \oplus  \gj^{\perp_{\omega}} 
%= \bar \gg \oplus$. 
%
%If $\gj$ is contained in the center of $\gj^{\perp_{\omega}}$ then the 
%subalgebra $\gj^{\perp_{\omega}}$ is an ideal in $\gg$. 

If $\gj^{\perp_{\omega}}$ is an ideal in $\gg$ we call the symplectic reduction \emph{normal}, and $\gj$ is called a \emph{normal isotropic ideal}. 
Normal reduction is of 
particular interest and we single out several important special cases:\\

The reduction of $(\gg, \o)$ with respect to $\gj$ is called  
\begin{itemize}
\item \emph{ Lagrangian reduction} if $\gj$ is a Lagrangian ideal.  
In this case, 
$\gj^{\perp_{\omega}} = \gj$ and $\bar \gg = \{0 \}$.
\item \emph{ Central reduction} if $\gj$ is central. In this case, 
$\gj^{\perp_{\omega}}$ is an ideal in $\gg$, which contains $C^1 \gg = [\gg, \gg ]$.
\item \emph{Codimension one normal reduction} if $\gj$ is one-dimensional and $[\gj ,  \gj^{\perp_{\omega}}] = \{0 \}$. 
\end{itemize}

\paragraph{Reducibility of  symplectic Lie algebras}
Not every symplectic Lie algebra may be reduced to one of
lower dimensions (compare Section \ref{sect:irreducible}). 
If $(\gg, \o)$ cannot be symplectically 
reduced, that is, if $\gg$ does not have a non-trivial,
isotropic ideal $\gj$, then $(\gg, \o)$ will be called \emph{symplectically irreducible}.
(Similarly if $\gj$ does not have a non-trivial normal or central ideal,
we speak of \emph{normal}, respectively \emph{central} irreducibility
of the symplectic Lie algebra $(\gg,\o)$.) We shall construct an irreducible
six-dimensional solvable symplectic Lie algebra in Example \ref{ex:irreducible}. 
On the other hand we have: 
% \vspace{1ex}
\begin{Ex}[Completely solvable implies reducible] \label{Ex:cs}
Let $(\gg, \o)$ be a symplectic Lie algebra with $\gg$ completely solvable. 
The definition of complete solvability implies that there exists a
one-dimensional ideal $\gj$ in $\gg$. 
Then $\gj$ is also a one-dimen\-sio\-nal isotropic ideal in  $(\gg, \o)$. 
\end{Ex}

\subsection{Normal symplectic reduction}
\label{NormalSubsect}
In this subsection we analyze in detail the process 
of \emph{normal} reduction. That is, we consider 
reduction of $(\gg, \o)$ with respect to  isotropic 
ideals $\gj$,  which have the property that 
$\gj^{\perp_{\omega}}$ is an ideal as well.\footnote{A similar analysis has been carried out in \cite{DM1}. The ``symplectic double extension'' covers in particular Lagrangian reduction and one-dimensional central reduction. However, the assumption that the sequence \eqref{eq:n3} splits (which is required for the constructions in \cite{DM1}) is too restrictive for our purposes.  
Also we use a slightly different terminology.} 
Given such $\gj$,  %  the following 
the Lie algebra extensions
\begin{eqnarray}
& 0 \ra \gj^{\perp_{\omega}} \ra  \gg   \ra \gh \ra 0 \label{eq:n1} \; \,  ,  \\
& 0 \ra \, \gj \, \ra   \gj^{\perp_{\omega}}  \ra \bar \gg \ra 0 \; \;     \label{eq:n2}\end{eqnarray} 
naturally arise in the reduction process. Here $(\bar \gg, \bar \omega)$ is the symplectic reduction and $\gh$ arises, since 
$\gj^{\perp_{\omega}}$ is an ideal.
As will be explained in the following two subsections, the quotients $(\bar \gg, \bar \omega)$ and $\gh$ carry additional structure, which is induced from the sympletic Lie algebra $(\gg, \o)$:
\begin{enumerate}
\item  The Lie algebra $\gh = \gg / \,  \gj^{\perp_{\omega}}$ admits
an induced torsion-free flat connection 
$\bar \nabla = \bar \nabla^{\omega}$ such that  the projection map
$(\gg, \nabla^{\omega}) \ra (\gh,  \bar \nabla^{\omega})$ is an
affine map of flat Lie algebras. 
% which 
%is defined by the equation
%\begin{equation} \label{eq:nablao} \bar \o (\bar \n_{\bar u} \bar v, a) \, = \, \,  -\o ( v, [u,a])
%\quad\text{, for all } \bar u, \bar v\in \gh, \; 
%a\in \gj.
\item The symplectic Lie algebra $(\bar \gg, \bar \omega)$ comes equipped with a distinguished  algebra $\hat \gq$   
of derivations of $\bar \gg$, which satisfies a
compatibility condition with respect to $\bar \o$.
%  induced by  \eqref{eq:n1}.
\end{enumerate}

%\emph{We view $\hat \gq   \subset 
%\Der(\bar \gg)$ as additional structure on $\bar \gg$ which is induced
%by the reduction process.} 

%\begin{remark}
Not every Lie algebra carries a torsion-free flat connection.
(See, for example,  \cite[\S 2]{Burde} for discussion.)
Note therefore that the existence of the torsion-free flat connection 
$\bar \nabla$  puts a strong restriction on
the possible quotients $\gh$, which may occur for 
normal reductions in  \eqref{eq:n1}. 
% \end{remark}

\subsubsection{The induced flat connection on $\gh$}
We show that normal reduction induces a torsion-free flat 
connection on its associated quotient algebra $\gh$.   
This stems from a  more general construction 
which we introduce now. 

\paragraph{Induced flat connection on normal quotients}
Let $\gj$ be an ideal of $\gg$, which satisfies  
$[\gj^{\perp_{\omega}}, \gj] = \{0 \}$. According to
Lemma \ref{lem:normal_red}, the subalgebra $\gj^{\perp_{\omega}}$ is then  an ideal of $\gg$.
We call such an ideal $\gj$ a \emph{normal ideal} of $(\gg, \o)$
and let $$\gh = \gg/ \gj^{\perp_{\omega}} \;  $$
denote the associated quotient Lie algebra. 
% Let $\gj$ be a normal isotropic ideal in $(\gg, \o)$.
From $\omega$ we obtain a \emph{non-degenerate} bilinear pairing $\o_{\gh}$ 
between $\gh$ and $\gj$, by declaring
\begin{equation}
\o_{\gh} (\bar u, a) = \o(u,a) \quad\text{, for all } \bar u \in \gh,\; 
a\in \gj  \label{eq:baro} \,  , 
\end{equation} 
where, for $u \in \gg$, $\bar u$ denotes its class in $\gh$. 
Since $\gj$ is normal, there is an induced representation $\ad_{| \, _{\gj}}^*$ of $\gh$ on  $\gj^*$ and viewing the form $\o_{\gh}$ as a linear map $\gh \ra \gj^*$ it defines a non-singular one-cocycle with respect to this
 representation: 
 
\bp \label{prop:induced_fc}
Let $\gj$ be a normal ideal in $(\gg,\o)$. Then the 
following hold:
\begin{enumerate}
\item The adjoint representation of 
$\gg$ gives rise to an induced representation $\ad_{| \, {\gj}}$ 
of $\gh$ on $\gj$. 
\item The homomorphism $\o_{\gh} \in \Hom(\gh, \gj^*)$, $\bar u \mapsto \o_{\gh}(\bar u, \cdot)$,   is an
isomorphism $\gh \ra   \gj^*$. Moreover, it defines a one-cocycle  
in $Z^1_{\ad|_{\gj}^*}(\gh, \gj^*)$, where $\ad_{| \, _{\gj}}^*$ denotes
the representation of $\gh$ on $\gj^*$ which is dual to  $\ad_{| \, {\gj}}$. 
\item The Lie algebra $\gh = \gg / \,  \gj^{\perp_{\omega}}$ carries 
a torsion-free flat connection $\bar \nabla = \bar \nabla^{\omega}$ which 
is defined by the equation
\begin{equation} \label{eq:nablao} \o_{\gh} (\bar \n_{\bar u} \bar v, a) \, = \, \,  -\o ( v, [u,a])
\quad\text{, for all } \bar u, \bar v\in \gh, \; 
a\in \gj.
\end{equation}
\end{enumerate}
\ep  
\pf % By Lemma \ref{lem:normal_red}, 
Since $\gj$ is centralized by $\gj^{\perp_{\omega}}$, the restricted representation $\ad_{| \, {\gj}}$ of $\gg$ is well defined on $\gh = \gg / \,  \gj^{\perp_{\omega}}$. Thus 1.\ holds.

By \eqref{eq:baro}, the element $\bar u \in \gh$ is contained in $\ker \o_{\gh} $ if and only if $u \perp_{\omega} \gj$, that is, if $u \in \gj^{\perp_{\omega}}$. Thus  $\bar u \in \ker \o_{\gh}$ implies
$\bar u = 0$. Since $\codim \gj^{\perp_{\omega}} = \dim \gj$, 
$\dim \gh = \dim \gj^*$. Therefore, $\o_{\gh}$ is an isomorphism.

The one-cocycle condition (cf.\ \eqref{eq:cocycles}) 
for $\o_{\gh}$ with
respect to the coadjoint representation $\rho = \ad_{| \, {\gj}}^*$ on $\gj^*$ reads: 
$$  0 = \rho(\bar u) \o_{\gh}(\bar v) -  \rho(\bar v) \o_{\gh} (\bar u) - \o_{\gh}  ([\bar u, \bar v]) ,  \quad\mbox{for all}\quad \bar u, \bar v\in \gh, $$
This is equivalent to the requirement that, for all $a \in \gj$,
\begin{equation}  -  \o(v, [u, a]) +  \o( u, [v, a]) - \o([u,v], a) = 0 \; . 
\label{eq:onecocycle}
\end{equation}
The latter holds by \eqref{eq:oclosed}, since $\omega$ is closed. This shows 2.

Finally, 3.\ is a standard consequence of 1.\ and 2.
Indeed, the coadjoint representation together with the
one-cocycle $\o_{\gh}$ define an affine representation of the Lie algebra $\gh$ on the vector space $\gj^*$. This representation is \'etale, since $\o_{\gh}$ is an isomorphism. 
To every \'etale affine representation of $\gh$ there is an associated 
torsion-free flat
connection on $\gh$  (compare, for example, \cite{Ba,BC}), which in our case
satisfies \eqref{eq:nablao}.
\epf  

\emph{Remark.} 
The proposition generalizes the well-known fact (compare \cite[Theorem 6]{Chu} and also \cite[\S 1] {MR1}) 
that $\gg$ admits a torsion-free flat connection which arises from $\o$ in a natural way. Indeed, taking $\gj = \gg$ we have $\gj^{\perp_{\omega}} = \{ 0 \} $ and therefore $\gg$ is a normal 
ideal. Now  \eqref{eq:nablao} 
states that the expression 
\begin{equation}  \label{eq:nablaog}
 \o( \n^\o_{ u}  v, w) \, = \, \,  -\o ( v, [u, w])
\quad\text{, for all } u, v, w \in \gg \; ,
\end{equation}
defines a torsion-free flat connection $\nabla^\omega$ on $\gg$. 
Note also that the natural map of flat Lie algebras $(\gg, \nabla^\omega) \ra (\gh,\bar \nabla^{\omega})$ 
is affine, that is, it is a connection preserving map. \\

As a particular consequence, we note:
\bp   \label{prop:ab_h}
If $\gj$ is central then the quotient Lie algebra $\gh= \gg / \,  \gj^{\perp_{\omega}}$ is abelian and the induced connection 
$\bar \nabla$ on $\gh$ is the trivial connection $\bar \nabla \equiv 0$.
\ep 
\pf
Indeed, if $\gj$ is central then $\bar \nabla \equiv  \bar \nabla^{\omega} \equiv 0$, by 
\eqref{eq:nablao}. Since $\bar \nabla$ is torsion-free,  
$[\bar u,\bar v]  = \bar \nabla_{\bar u} \bar v -  \bar \nabla_{\bar v} \bar u = 0$, for all $\bar u, \bar v \in \gh$. This shows the proposition. (Alternatively, the first claim can be read off directly from \eqref{eq:onecocycle}.)
\epf

In the context of \emph{normal reduction},  we are 
considering normal \emph{isotropic} ideals $\gj$. 
By the above, for every normal isotropic ideal $\gj$, 
the associated quotient Lie algebra $\gh = \gg/ \gj^{\perp_{\omega}}$ carries a torsion-free flat
connection $\bar \nabla = \bar \nabla^\o$.  

\bd \label{def:flatquotient}
The flat Lie algebra $(\gh,\bar \nabla)$ is called the
\emph{quotient flat Lie algebra} 
associated to reduction with respect to the normal
ideal $\gj$. 
\ed

\paragraph{Totally geodesic subalgebras}
The properties of the symplectic Lie algebra $(\gg, \omega)$  closely interact with the geometry of the flat Lie algebra $(\gg, \nabla^\o)$ and also with its flat quotients $(\gh, \bar \nabla^\o)$.
For instance, we have:
\bp \label{prop:tg}
Let $(\gg, \omega)$ be a symplectic Lie algebra and $\nabla^\o$ its associated torsion-free flat connection.Then:
\begin{enumerate}
\item  A subalgebra $\gl$ of $\gg$ is a totally geodesic subalgebra with respect to 
$\nabla = \nabla^\o$ if and only if $[ \gl, \gl^{\perp_{\o}} ] \subseteq 
 \gl^{\perp_{\o}} $. 
 \item For every ideal $\gi$ of $\gg$, the orthogonal subalgebra 
 $\gi^{\perp_{\o}}$ is totally geodesic.
 \item Every isotropic ideal $\gj$ of $(\gg, \omega)$ is a totally geodesic subalgebra and the induced connection $\nabla^\omega$ on $\gj$ is trivial.
 \item Lagrangian subalgebras of $(\gg, \omega)$ are totally geodesic.
\end{enumerate}
\ep 
\pf Let $u,v \in \gl$. Then $\nabla_{u} v \in \gl$ if and only if, for all $w \in  \gl^{\perp_{\o}}\! $, $\o(  \nabla_{u} v , w) = 0$. 
That is $\o(  \nabla_{u} v , w) = - \o(v, [u, w]) = 0$. Hence, 1.\ is proved. The further assertions are immediate consequences. 
For 3.\ one uses \eqref{eq:nablaog} to show that the induced connection $\nabla^\omega$ on $\gj$ is trivial. 
\epf 
In particular, Lagrangian subalgebras $\gl$ of $(\gg, \omega)$ carry an induced flat structure $(\gl,  \nabla^\o)$ from $(\gg, \nabla^\o)$. Contrasting with 3., the results in Section \ref{sect:Lag_subgps} below show that every flat Lie algebra $(\gh, \nabla)$ appears in this way as a Lagrangian subalgebra 
for some $(\gg, \nabla^\o)$.   

\subsubsection{Induced derivation algebra on $(\bar \gg, \bar \o)$}
% on $(\bar \gg,\bar \o)$.}
Let $\gj$ be a normal isotropic ideal in $(\gg, \o)$. Dividing out the ideal $\gj$ from the exact 
sequence \eqref{eq:n1}  gives an extension of Lie algebras
\begin{equation} 
0 \ra \bar \gg \ra   \gg/ \gj   \ra  \gh \ra 0 \; \, ,  \label{eq:n3}
\end{equation}
which directly relates $\gh$ and $\bar \gg$. Let
%  conjugation class 
$$ \hat \varphi: \gh \ra \out(\bar \gg) = \Der(\bar \gg)/\,  \Inn(\bar \gg) \;  $$ 
%and a cohomology
%class $\hat \mu \in H^2_{\hat \varphi}(\gh,Z(\bar \gg))$. 
be the conjugation class which belongs to this extension.
The preimage of $\im \hat \varphi = \hat \varphi(\gh) \subseteq \out(\bar \gg)$ under the projection $\Der(\bar \gg) \ra \out(\bar \gg)$ is
a Lie subalgebra \begin{equation}
 \hat \gq  \, \subseteq \, \Der(\bar \gg) \label{eq:hatq} 
\end{equation}
of derivations of $\bar \gg$, which is determined by $\hat \varphi$.
The algebra $\hat \gq $ consists of the inner derivations of $\bar \gg$ together with the derivations induced from $\gh$. This also shows that $\hat \gq$ is induced by 
the image of the adjoint representation of $\gg$ restricted to the ideal $\gj^{\perp_{\omega}}$.
As will be derived in Proposition \ref{prop:sympcond_q} below, 
the algebra $\hat \gq $ satisfies a strong compatibility condition with
respect to the symplectic form $\bar \o$ on $\bar \gg$.\\
%In order to control the properties of the algebra 
%$\gq$, 
 
\noindent 
Since $\gj$ is isotropic in $(\gg, \o)$ we can 
choose an isotropic decomposition  
\begin{equation}
 \gg = N \oplus W \oplus \gj   \label{eq:iso_dec} \; .
\end{equation}   
Then, in particular, 
$$ \gj^{\perp_{\omega}} = W \oplus \gj \text{ and }
N^{\perp_{\omega}}= W \oplus N \; . $$
Using \eqref{eq:iso_dec},  we identify the  underlying vector spaces of the quotient Lie algebras $\gh$, $\gg/\, \gj$, $\bar \gg$ with $N$, $N \oplus W$, $W$ via the respective quotient maps. 
%For simplicity of the exposition, \emph{we will consider in the following  exclusively the situation that $\gj$ is central in $\gg$}. 

Choosing  a  representing map 
 \begin{equation}
 \varphi: N \longrightarrow \Der(\bar \gg) \; \, , \;  n \, \mapsto \,  \varphi_{n} \;  
 \label{eq:rep_map}
 \end{equation} for the conjugation class $\hat \varphi$, 
the Lie products for  $\gg/\, \gj$ and $\gg$ can be described 
% with respect to \eqref{eq:iso_dec} 
as follows: 
 % (belonging to \eqref{eq:n3})
\begin{enumerate}
\item 
There exists 
$$ \mu \in {\bigwedge}^{\! 2} \, \gh^* \tensor \bar \gg$$   
such that, given $n,n' \in N$, $w,w' \in W$,
the expression 
$$ [ n+w , n'+ w' ]_{\gg/\gj} =  [n,n']_{\gh} + \mu(n,n') + 
\varphi_{n} (w') -  \varphi_{n'} (w) + [w,w']_{\bar \gg}   $$  
describes the Lie product of $\gg/\, \gj$ on the 
vector space $N \oplus W$.  \\
\item % Assuming that 
Since $\gj$ is central in $\gj^{\perp_{\o}}$, 
the extension \eqref{eq:n2} is determined by a two-cocycle 
$$\alpha \in Z^2(\bar \gg, \gj) \,  $$
such that,  for all $v,w \in W$,
\begin{equation}
\label{eq:lie1f}  [v,w]   =  {[v , w]}_{\bar \gg}  + \alpha(v,w)  \;  
\end{equation}
describes the Lie product 
$ [ , ] : \gg \times \gg \longrightarrow \gg$ restricted to $W$.
\item Similarly, there is a one-cochain 
$$\lambda \in {\mathcal C}^1(\bar \gg,\Hom(\gh,\gj)) \, \; , \; v \mapsto \lambda(v)
\, , $$ 
such that, for all $n \in N$, $v \in W$,
\begin{equation}
\label{eq:lie2f} \lbrack n , v \rbrack  \, =  \, \varphi_{n}(v) + \lambda_{n}(v)  \; . 
% \label{eq:lie3} \lbrack n , j \rbrack  =  [v,j]  & = & 0 \; . 
\end{equation} 
\end{enumerate}

Observe that 1.\ to 3.\ determine the Lie product of $\gg$ in case that
$\gj$ is central in $\gj$. 
 We remark that  
% In a symplectic Lie algebra $(\gg, \o)$ 
the extension cocycles 
$\alpha$ and $\mu$ 
are determined by the symplectic reduction $(\bar \gg,\bar \o)$ together with
the maps  $\varphi$ and $\lambda$.   % This is shown by the following:

\bl  \label{lem:cocycle_relations}
Let $(\bar \gg, \bar \o)$ be the reduction with respect to the
normal isotropic ideal $\gj$ of $(\gg, \o)$, and $\varphi: N \ra \Der(\bar \gg)$
the representing map  \eqref{eq:rep_map}. 
Then, for all $u,v \in W$, $n,n' \in N$, the following hold:
\begin{enumerate}
\item $\o_{\gh}(\alpha(u,v),n) = \; - \left(\bar \o(\varphi_{n}(u),v) + \bar \o(u,\varphi_{n}(v)) \right)$. 
\item $\bar \o(\mu(n, n'),u)  = \; \o_{\gh}(\lambda_{n}(u), n') + \o_{\gh}(n, \lambda_{n'}(u))$.
\end{enumerate}
\el  
\pf Both equations are enforced by the cocycle condition
 \eqref{eq:oclosed} for $\o$. 
For example, we have (using \eqref{eq:lie1f} and  \eqref{eq:lie2f}): 
\begin{equation*} \begin{split}
0  & = \o([n,u],v) +  \o([v,n],u) +  \o([u,v],n) \\
& =  \bar \o( \varphi_n u,v) +  \bar \o(- \varphi_{n}v ,u) +  \o_{\gh}( \alpha(u,v),n) \; . 
\end{split}
\end{equation*}
This implies 1.
\epf 

We derive now 
our principal result on the derivation algebra $\hat \gq $ (defined in \eqref{eq:hatq})
in the case of central reduction: 

\bp   \label{prop:sympcond_q}
Let $(\bar \gg, \bar \o)$ be the reduction with respect to the
central isotropic ideal $\gj$ of $(\gg, \o)$. Then, for all $u,v \in W$, $n,n' \in N$: 
\begin{equation} 
\begin{split}  \label{eq:sympcond_q}
 \o_{\gh}(\lambda_{n'}([u,v]_{\bar \gg}),n)   = &  \\ 
- \, ( \,  \bar \o(\varphi_{n} \varphi_{n'} u, v) + &  \bar \o(\varphi_{n'} u, \varphi_{n} v) + \bar \o(\varphi_{n} u, \varphi_{n'} v) +  \bar \o(u, \varphi_{n} \varphi_{n'} v) \, ) \; .
\end{split}
\end{equation}
\ep 
\begin{proof}  Note that the expression
 \begin{equation}
 \alpha_{\varphi}(v,w) = \; \alpha(\varphi v, w) + \alpha(v, \varphi w) \; \text{ ,  for all $v,w \in \bar \gg$}, 
\end{equation}
defines a two-cocycle $\alpha_{\varphi} \in Z^2(\bar \gg, \Hom(\gh, \gj))$, where
$\Hom(\gh, \gj)$ is considered to have the trivial $\bar \gg$-module structure.
Using  that $\gj$ is central we may read off from the Jacobi-identity in $\gg$ 
that the relation  
\begin{equation} \label{eq:cobound_alphaphi}
\alpha_{\varphi} (v,w) =   \lambda([v,w]_{\bar \gg})   
\end{equation}
is satisfied, for all $v,w \in \bar \gg$.  (This shows that 
$  \alpha_{\varphi} = - \partial \lambda$ 
and, in  particular, $\alpha_{\varphi}$ is a coboundary.)
Combining Lemma  \ref{lem:cocycle_relations} with  
\eqref{eq:cobound_alphaphi} proves \eqref{eq:sympcond_q}.
\end{proof}

For central reductions to abelian symplectic Lie algebras 
$(\bar \gg, \bar \o)$ we deduce:
\bp \label{prop:abelian_red} Let $(\bar \gg, \bar \o)$ be the reduction with respect to the
central isotropic ideal $\gj$ of $(\gg, \o)$ and $\gq$ the image of 
the lift $\varphi: N \ra \Der(\bar \gg)$.
If $\bar \gg$ is abelian then the following hold:
\begin{enumerate}
\item The image 
$ \gq = \varphi_{N}  \, \subseteq \End(\bar \gg)$ 
is an abelian Lie subalgebra of endomorphisms of 
$\bar \gg$. 
\item For all $\varphi, \psi \in \gq$,  we have 
% the cocycle vanishing  condition 
\begin{equation} \label{eq:cobound_zero}
% \begin{split}
0  \; =  \; \bar \o (\varphi \psi u ,v) +
\bar \o (\varphi u,\psi v)  + \bar \o (\psi u,\varphi v)+ \bar \o (u,\varphi \psi v )
\; .  
% \end{split}
\end{equation}
\end{enumerate}
\ep 
\pf Since $\bar \gg$ is abelian, the map $\hat \varphi$ is 
indeed a homomorphism $\gh \ra \End(\bar \gg)$, and
the representing map $\varphi = \hat \varphi$ is a homomorphism. 
As observed in Proposition \ref{prop:ab_h}, the Lie algebra $\gh$ is abelian, 
since $\gj$ is central. It follows that $\gq$ is a homomorphic 
image of an abelian Lie algebra.  Moreover, \eqref{eq:cobound_zero} is a direct consequence of 
\eqref{eq:cobound_alphaphi}.
\epf 

\subsection{Symplectic oxidation} \label{sect:sympl_ox}
Given an appropiate set of additional data on a symplectic 
Lie algebra $(\bar \gg, \bar \o)$ the process of reduction can
be reversed to construct a symplectic Lie algebra $(\gg, \o)$ 
which reduces to $(\bar \gg, \bar \o)$. In this context we speak of
\emph{symplectic oxidation}. Here we deal with the important 
special case of one-dimensional central 
oxidation.\footnote{This special case of symplectic 
oxidation essentially coincides with the ``double extension'' as developed 
in \cite[Th\'eor\`eme 2.3]{MR1}.} The case of Lagrangian oxidation will be covered separately in Section \ref{sect:Lagrange_ext}. 
%(Similar constructions
%have been considered in the literature and with different naming 
%conventions, cf.\ \cite{Bo,DM, ??}.)

% \subsubsection{Oxidation of Lie algebras} 
\paragraph{Oxidation of Lie algebras.}
Let $\bar \gg$ be a Lie algebra and $\overline{[ , ]}$ its Lie bracket.
Assume the following set of additional data is  given:
\begin{enumerate}
\item a derivation $\varphi \in {\Der(\bar \gg)}$, 
\item   
a two-cocycle $\alpha \in Z^2(\bar \gg)$,
\item  a linear form $\lambda \in \bar \gg^*$.
%\item {a linear map $\varphi \in {\rm End(\bar \gg)}$.} 
%\item {a linear form $\lambda \in \bar \gg^*$.} 
%\item {a two-form $\alpha \in \bigwedge^2 \bar \gg^*$.}
\end{enumerate}
Let  \begin{equation} \label{eq:lie0}
\gg = \; \,  \langle \xi  \rangle \oplus \, \bar \gg \, \oplus \langle H \rangle 
\end{equation}
be the vector space direct  sum of  $\bar \gg$ with one-dimensional
vector spaces $\langle \xi  \rangle , \langle H \rangle$.  
Define an alternating bilinear product 
$$ [ , ] : \gg \times \gg \longrightarrow \gg  \; 
$$ % $ \in \bigwedge^2 \gg^* \otimes \gg$ 
by requiring that the non-zero 
brackets are given by
\vspace*{-1ex} \begin{eqnarray}
\label{eq:lie1}  [v,w]  & = &   \overline{[v , w]} + \alpha(v,w) H \; , \\
\label{eq:lie2} \lbrack \xi, v \rbrack & = & \varphi(v) + \lambda(v) H \; , 
% \label{eq:lie3} \lbrack \xi, H \rbrack  =  [v,H]  & = & 0 \; . 
\end{eqnarray} 
where $v,w \in \bar \gg \,  \subseteq \gg$,  and  $\overline{[\,  , \, ]}$
denotes the bracket in $\bar \gg$. In particular, $\gj = \spanof{H}$ is a central
ideal of $\gg$.

\bp  \label{prop:Lieox}
The alternating product $[ , ]$ as declared in \eqref{eq:lie1} - \eqref{eq:lie2} above defines
a Lie algebra $\gg = (\gg, [ , ])$ % =  \gg_{\bar \gg, \varphi, \lambda, \alpha}$ 
if and only if 
\begin{equation}
\alpha_{\varphi} =  - \partial  \lambda  \;  \in B^2( \bar \gg) .   \label{eq:cobound}  
\end{equation}  
(That is, $\alpha_{\varphi}$  is a coboundary and 
satisfies  $ \alpha_{\varphi}(v,w) =   \lambda( \overline{[v , w]})$.) 
\ep 
\pf Recall from \eqref{eq:alphaphi} that $\alpha_{\varphi}(v,w) = \alpha(\varphi v, w) + \alpha(v, \varphi w)$,  $v,w \in \bar \gg$. 
Then it is easily verified that the coboundary condition \eqref{eq:cobound}  is equivalent to the Jacobi-identity 
for the alternating product $[ , ]$.
\epf

\vspace{1ex} \noindent 
We call the Lie algebra $\gg = \bar \gg_{\varphi, \alpha, \lambda}$ the \emph{central oxidation of $\bar \gg$} (with respect to the data $\varphi, \alpha, \lambda$). \\

We remark next that central oxidation with nilpotent derivations constructs nilpotent Lie algebras. 

\bp  Let  $\bar \gg$ be a nilpotent Lie algebra and assume that the derivation
$\varphi$ above is nilpotent. Then the Lie algebra 
$\gg = \bar \gg_{\varphi, \alpha, \lambda}$ 
is nilpotent, and $H$ is contained in the center of $\gg$.
\ep 
\pf  Recall that Engel's theorem \cite[III, Theorem 2.4]
{Helgason} asserts 
that a Lie algebra is nilpotent if and only if all linear 
operators of the adjoint representation are nilpotent.
With our assumptions, the 
linear operators of the adjoint representation of $\gg$, 
which are defined by \eqref{eq:lie1} and \eqref{eq:lie2},   %\eqref{eq:lie3} 
are nilpotent. Moreover,  $H$ is always central in $\gg$. 
%Therefore, the proposition follows
%from Engel's theorem.
% and
%that, for all $j \geq 1$,
%$ C^{n- c_{j}} \gg \subseteq 
% C^{j} \bar \gg   \oplus  \langle H \rangle$, where $c_{j} = 
% \dim C^j \gg$, $n = \dim \bar \gg$. 
\epf
\paragraph{Symplectic oxidation.}
Now let $(\bar \gg, \bar \o)$ be a symplectic Lie algebra.
% (recall that,  for $u,v \in \bar \gg$,
%$\bar \o_{\varphi} (u,v)  = \bar \omega(\varphi u, v) + \bar \omega(u, \varphi v)$). 
%$\omega$ be the non-degenerate two-form  on the vector space $\gg$ 
%such that the decomposition \eqref{eq:li0} is isotropic and $\omega(\xi,H) = 1$.
%
% By Proposition \ref{prop:Lieox}, 
%According to Proposition \ref{prop:Lieox}, equations \eqref{eq:lie1} -  \eqref{eq:lie1}
%define a Lie algebra $\gg = \bar \gg_{\varphi, \lambda, \alpha_{\varphi}}$ if 
%and only if 
%% with respect to the data $\varphi, \alpha =  \alpha_{ \varphi}$ equations \eqref{eq:lie1} - \eqref{eq:lie3} 
%$\beta = \alpha_{\varphi}$ satisfies \begin{equation}  \label{eq:partiallambda}
%\beta = \beta_{\alpha_{\varphi}, \varphi} = \;  \, - \partial \lambda   \;  .  
%\end{equation} 
We define a non-degenerate two-form $\omega$ on 
the vector space $\gg$ defined in \eqref{eq:lie0} by requiring that \begin{enumerate}
\item $\omega$ restricts to $\bar \omega$
on $\bigwedge^2 \bar \gg$,
\item the decomposition \eqref{eq:lie0} of $\gg$ satisfies $\bar \gg^{\perp_{\omega}}  =  \langle \xi, H \rangle$, 
\item $\omega(\xi, H)  = 1$.
%$\xi \perp_{\omega} \bar \gg$,
%$H  \perp_{\omega} \bar \gg$.
\end{enumerate}
In particular, the decomposition \eqref{eq:lie0} of $\gg$ is isotropic. \\
% It turns out that $(\gg, \omega)$ is  a symplectic Lie algebra: 
%
%\bl  \label{lem:symplecticlie}
%Let $\alpha \in Z^2(\bar \gg)$ be a two-cocycle
%such that the coboundary condition \eqref{eq:partiallambda} is satisfied. Let $\omega \in \bigwedge^2 \gg$ be declared as above,
% if and only if $\alpha = \alpha_{\varphi}$.
%\el 
% 
%In particular, we note:  

Let $\varphi \in {\rm Der}(\bar \gg)$ and 
$  % \alpha \, = \;
 \bar \o_{\varphi} $ the  two-cocycle associated to
$\bar \o$ and  $\varphi$ (using notation as  in \eqref{eq:alphaphi}).

\bp \label{prop:symplecticox} 
Let $\alpha \in Z^2(\gg)$ be a two-cocycle such that 
$ \alpha_{\varphi} = - \partial \lambda$, for some $\lambda \in \bar \gg^*$.
Then $$ (\gg, \omega)  =  
( \bar \gg_{\varphi, \alpha, \lambda}, \omega)$$ is a symplectic Lie algebra
if and only if
\begin{equation}  \label{eq:alphabo}
 \alpha =  \bar \o_{\varphi}  \; . 
\end{equation}
\ep 
\pf The coboundary condition for $\alpha$ is required by
 Proposition \ref{prop:Lieox} and ensures that $ [ , ]$ as 
 defined in \eqref{eq:lie1} - \eqref{eq:lie2} is a Lie algebra
 product. Now, for $\omega$ to be closed it is required that, for 
 all $u,v \in \bar \gg$, 
 \begin{equation*}  \begin{split} 
  0  & = \omega([\xi, u], v) +  \omega([v, \xi], u) +  \omega([u, v], \xi ) \\ 
  & =   \bar  \omega(\varphi u, v) - \bar \omega( \varphi v, u) -  \alpha(u,v) ,
 \end{split}
 \end{equation*}
 which is equivalent to $\alpha =  \bar \omega_{\varphi}$. 
 It is straightforward to verify that 
 $ (\gg, \omega) $ is symplectic if and only if 
 the latter condition holds. 
 \epf

\noindent 
The symplectic Lie algebra $(\gg,\o) = ( \bar \gg_{\varphi, \bar \o_{\varphi}, \lambda}, \omega)$ is called \emph{symplectic oxidation 
of $(\bar \gg, \bar \o)$} with respect to the data $\varphi$ and $\lambda$. 
Observe that the  symplectic oxidation
reduces 
to $(\bar \gg, \bar \o)$ with respect to the one-\-dimensional
central ideal $\gj = \langle H \rangle$.

Given $\varphi  \in \Der(\bar \gg)$, the derived two-cocycle is $\alpha = \bar \o_{\varphi}$, and its second derivation is the two-cocycle $\bar \o_{\varphi,\varphi} =  \alpha_{\varphi}$. 
%which, for all $u,v \in \bar \gg$, thus satisfies,  
%\be \o_{\varphi,\varphi} (u,v) := \, \o (\varphi^2 u,v) +
%2\o (\varphi u,\varphi u) + \o (u,\varphi^2 v)   .
%\ee
According to the previous proposition the cohomology class
of $\alpha_{\varphi}$ is the obstruction for the existence of a symplectic oxidation to the data $(\bar \gg, \bar \o, \varphi)$: 

\bc Let $(\bar \gg, \bar \o)$ be a symplectic Lie algebra and
$\varphi \in \Der(\bar \gg)$ a derivation. Then there exists 
a symplectic oxidation $( \bar \gg_{\varphi, \bar \o_{\varphi}, \lambda}, \omega)$ if and only if the cohomology class
$$   [ \bar \o_{\varphi, \varphi} ] \, \in H^{2}(\bar \gg) $$
vanishes. 
\ec 

\paragraph{Oxidation reverses reduction}
Let  $(\gg, \omega)$ be a symplectic Lie algebra,  
$\gj =  \langle H \rangle$ a one dimensional central ideal
and $(\bar \gg, \bar \omega)$ the symplectic reduction with respect to $\gj$.
We may choose
$\xi$ in the complement of $\gj^{\perp_{\omega}}$ with 
$\omega(\xi, H) =1$ and an isotropic     
direct sum decomposition $$ 
\gg = \langle \xi \rangle \oplus \gj^{\perp_{\omega}}=  \langle \xi \rangle \oplus \bar \gg \oplus \gj
%   \; ,  \text{ where } \bar \gg =  \xi^{\perp_{\omega}} \cap H^{\perp_{\omega}}
   \; . $$
The adjoint operator  $\ad\!({\xi}) \in \Der(\gg)$ induces a derivation 
$$ \varphi =  \ad\!({\xi}){|_{\bar \gg}} \in \Der(\bar \gg) $$
of the reduced Lie algebra $\bar \gg = \gj^{\perp_{\omega}}/ \gj$.
With respect to the above decomposition we may write, as in \eqref{eq:lie1}, \eqref{eq:lie2}, 
$$  \lbrack \xi, v \rbrack  =  \varphi(v) + \lambda(v) H \; ,  \; [v,w]   =    \overline{[v , w]} + \alpha(v,w) H $$ 
where  $$ \lambda   \in  \bar \gg^* \; , \; \alpha \in Z^2(\bar \gg)$$
are determined by the bracket $[ \, , \, ]$ of $\gg$. \\ 

Symplectic oxidation as introduced in Proposition \ref{prop:symplecticox} above reverses one-\-dimension\-al 
central symplectic reduction:

\bp \label{prop:red_ox}
Let $(\gg, \omega)$ as above be a symplectic Lie algebra which reduces 
to $(\bar \gg, \bar \omega)$ with respect to a one dimensional central ideal 
$\gj =  \langle H \rangle$. Put $\lambda_\omega (v) = 
\omega(\xi, [\xi, v])$, for all $v \in \bar \gg$.
Then
\begin{enumerate}
\item $\lambda = \lambda_{\omega}$.
\item The data $\bar \omega$, $\varphi$, $\lambda_{\omega}$ 
for $\bar \gg$ satisfy the
coboundary relation \eqref{eq:cobound}.
\end{enumerate}
In particular,  
$( \gg, \omega)$ %  =  (\bar \gg_{\varphi, \lambda}, \omega) $$
is the symplectic oxidation of $(\bar \gg, \bar \omega)$ 
with respect to $\varphi$, $\lambda_{\omega}$. 
\ep
\pf  Since  $\lbrack \xi, v \rbrack  =  \varphi(v) + \lambda(v) H$, evidently 
$- \lambda(v) = \omega([\xi, v], \xi)$. Hence 1.\  holds. The remaining assertions
are a consequence of Proposition \ref{prop:Lieox} and of Proposition \ref{prop:symplecticox}.
\epf 

\subsection{Lifting and projection of isotropic subalgebras and ideals, corank of reductions} \label{sect:Lifting}
Let $\gj$ be an isotropic ideal in $(\gg, \o)$ 
and let $(\bar \gg, \bar \o)$ be the symplectic reduction 
with respect to $\gj$. Recall that $\bar \gg = \gj^{\perp_{\o}}/ \gj$.

\paragraph{Isotropic lifting} Let $\bar \ga$ be an isotropic subalgebra in $\bar \gg$. Then the 
preimage $\ga \subset  \gj^{\perp_{\o}}$ of $\bar \ga$ under the quotient homomorphism 
$\gj^{\perp_{\o}} \ra \bar \gg$ is an ideal of $\gj^{\perp_{\o}}$ and it is called the \emph{lifting of $\bar \ga$}. 
The lifting $\ga$ is an isotropic subspace 
of $(\gg, \omega)$; in fact it is a subalgebra of $\gg$. Furthermore if $\bar \ga$ is Lagrangian then $\ga$ is Lagrangian in $(\gg, \o)$.
If $\bar \ga$ is an ideal of $\bar \gg$, then, in general, the lifting
$\ga$ is not an ideal of $\gg$. However, in the context of normal reductions,  we have the following criterion:

\bp \label{prop:isotropic_lifting}
Let  $(\bar \gg, \bar \o)$ be a normal symplectic reduction with
representing cochain $\varphi: N \ra \Der(\bar \gg)$
defined with respect to an isotropic decomposition as in \eqref{eq:rep_map}.  Let $\bar \ga$ be an isotropic ideal in $(\bar \gg, \bar \o)$ and $\ga \subseteq \gg$ the lifting of $\ga$. Then:
\begin{enumerate}
\item The lifting $\ga$ is an isotropic ideal in $(\gg, \o)$ if and 
only if $\bar \ga$ is a $\hat \gq$-invariant ideal of $\bar \gg$.
\item The lifting $\ga$  is an isotropic ideal in $(\gg, \o)$ if and 
only if
$$ \varphi_{n} \, \bar \ga \subseteq \bar \ga \; \;  \text{,  for all $n \in N$}. $$ 
\item The lifting $\ga$ is a Lagrangian ideal in $(\gg, \o)$ if and 
only if $\bar \ga$ is a $\hat \gq$-invariant Lagrangian ideal of $(\bar \gg, \bar \o)$. 
\item If the ideal $\bar \ga$ is characteristic in $\bar \gg$ (that is, $\bar \ga$ is invariant under every derivation of $\bar \gg$) then $\ga$ is an isotropic ideal in $(\gg, \o)$. 
\end{enumerate}
\ep
\pf  Since the reduction is normal, $\gj^{\perp_{\o}}$ is an ideal in $\gg$.  
Recall that $\hat \gq \subset \Der(\bar \gg)$ is the homomorphic image 
of the adjoint representation of $\gg$ restricted to $\gj^{\perp_{\o}}$. 
Therefore, $\ga$ is an ideal of $\gg$ if and only if $\hat \gq \, \bar \ga \subseteq \bar \ga$. This shows 1. Now 2.\ follows also, since $\hat \gq = \varphi_{N} + \Inn(\bar \gg)$ consists of  $\varphi_{N}$ and inner derivations of $\bar \gg$. Finally,  3.\ holds since the lift of every Lagrangian subspace in $(\bar \gg, \bar \o)$ is a Lagrangian subspace in $(\gg, \o)$.
\epf 

\paragraph{Isotropic projection} 
Conversely, if $\ga$ is an isotropic subalgebra of $(\gg, \o)$ then $\bar \ga = \ga \cap \gj^{\perp_{\o}} / \, \gj$ is an isotropic subalgebra of $(\bar \gg, \bar \o)$, which is called the \emph{projection of 
$\ga$}. Note that if $\ga$ is an isotropic ideal in $\gj^{\perp_{\o}}$
then $\bar \ga$ is an isotropic ideal in $(\bar \gg, \bar \o)$. 

\bp \label{prop:iso_projection}
Let $\ga$ be  an isotropic subalgebra of $(\gg, \o)$. Then the following hold:
\begin{enumerate}
\item 
The subalgebra $\bar \ga$ is isotropic in $(\bar \gg, \bar \o)$ 
and $\corank_{\bar \omega} \bar \ga  \leq  \corank_{\omega} \ga$.
\item If $\ga$ is Lagrangian then $\bar \ga$ is Lagrangian. 
\item  Every isotropic subalgebra of $(\bar \gg, \bar \o)$ is the 
projection of an isotropic subalgebra of $(\gg, \o)$ which is contained in $\gj^{\perp_{\o}}$.
\item  Every Lagrangian subalgebra of $(\bar \gg, \bar \o)$ is the 
projection of a Lagrangian subalgebra of $(\gg, \o)$ which is contained in $\gj^{\perp_{\o}}$.
\end{enumerate}
\ep 
\pf
The first statement is a consequence of Lemma \ref{lem:corank}.
The second follows since $\ga$ is Lagrangian if and only if 
$\corank_{\omega} \ga = 0$. Since 
projection reverses the process of lifting, the next two statements are implied. 
\epf 

As an application, we observe that the corank of a symplectic Lie algebra as defined in Section \ref{sect:srank} behaves nicely with respect to symplectic reduction:

\bc \label{cor:corank}
Let $(\gg,\omega)$ be a symplectic Lie algebra and $(\bar \gg,\bar \omega)$ a symplectic reduction (with respect to some isotropic ideal of $(\gg, \o)$). Then 
$$ \corank (\bar \gg,\bar \omega)   \,  \leq \, \corank (\gg,\omega) \; . 
$$
\ec 

In particular, if $(\gg, \o)$ has a Lagrangian ideal then $(\bar \gg,\bar \omega)$ has a Lagrangian ideal. 
% \section{Nilpotent symplectic Lie groups}

%%%%%%%%%%%%%%%%%%%%%%%%%%%%%%%%%%%%%%%%%%%%%
\section{Complete  reduction of symplectic Lie algebras}
\label{sect:cr}
%\paragraph{Completely reducible symplectic  Lie algebras}
% Let us introduce here some new terminology. 
We say that a symplectic Lie algebra $(\gg, \o)$ can be 
\emph{symplectically reduced in $\ell$ steps} 
to a symplectic Lie algebra $(\bar \gg, \bar \o)$ if there exists a sequence  of subsequent symplectic reductions 
% \be \label{eq:rs}
$$
(\gg, \o), (\gg_{1}, \o_{1}), \, \cdots \,, 
(\gg_{\ell}, \o_{\ell}) = (\bar \gg, \bar \o) \; .
$$ 
% \ee 
The number $\ell$ is called the \emph{length} of the \emph{reduction
sequence} %  \eqref{eq:rs}  
and the symplectic Lie algebra  $(\bar \gg, \bar \o)$ is
called its \emph{base}.  The  reduction sequence is called \emph{complete} if its base is an irreducible symplectic Lie algebra, that is, if $(\bar \gg, \bar \o)$ does not have a non-trivial isotropic ideal. Every symplectic Lie algebra $(\bar \gg, \bar \o)$,
which occurs as the base of a \emph{complete} reduction sequence for $(\gg, \o)$, 
will be called an \emph{irreducible symplectic base} for $(\gg, \o)$. 

Our main result on symplectic reduction is Theorem \ref{thm:base_unique} below,  which asserts that all irreducible symplectic bases for $(\gg, \o)$ are isomorphic.  That is, we show that \emph{the irreducible base does not depend on the choice of complete reduction sequence}, but instead  $(\bar \gg, \bar \o)$ is an invariant associated with  $(\gg, \o)$. In particular, the theorem provides a rough classification of symplectic Lie algebras according to the type of their irreducible base. 

%Symplectic Lie algebras may be distinguished by the properties of their irreducible symplectic base. 
As an illustration of this principle, we will show that a symplectic Lie algebra admits a Lagrangian subalgebra if and only if its base has a Lagrangian subalgebra. 
This approach motivates a detailed discussion of the properties of irreducible symplectic Lie algebras. Somewhat surprisingly, as we show in Section \ref{sect:irreducible},  it turns out that irreducible symplectic Lie algebras are solvable of rather restricted type and a complete classification up to symplectomorphism is possible. 
% In Section \ref{sect:irr_Lag} 
This also leads us to the construction of an eight-dimensional irreducible symplectic Lie algebra, which does not have a Lagrangian subalgebra.\footnote{To our knowledge this is the first published example of a symplectic Lie algebra without a Lagrangian subalgebra. Compare also footnote 
\ref{fn:Lag_alg2}.} 
% \vspace{1ex}

\subsection{Complete reduction sequences}
For every reduction sequence of symplectic Lie algebras of the form 
\be \label{eq:rs2}
(\gg, \o), (\gg_{1}, \o_{1}), \, \cdots \,, 
(\gg_{\ell}, \o_{\ell}) = (\bar \gg, \bar \o) \;  
\ee 
let $\bar \gj_{i+1}$ denote the isotropic ideal in $(\gg_{i}, \o_{i})$, which determines the 
reduction to  $(\gg_{i+1}, \o_{i+1})$. 
To such a sequence \eqref{eq:rs2} there belongs an associated nested sequence of isotropic subalgebras $\gj_{i}$ 
of $(\gg, \o)$ such that: 
\begin{equation} \label{eq:rs3}
\gj = \gj_{1} \, \subseteq \, \gj_{2} \, \subseteq \, \ldots \, \subseteq \, \gj_{\ell}  \; 
\; \subseteq \; \gj_{\ell}^{\perp_{\o}} \, \subseteq \,  \ldots \, \subseteq   \, \gj_{2}^{\perp_{\o}} \, \subseteq \, \gj^{\perp_{\o}}  \; , 
\end{equation}
where $\gj_{i}^{\perp_{\o}}$, $i= 1, \ldots, \ell$,  is a subalgebra of $\gg$ and there are isomorphisms  
$$     (\gg_{i}, \omega_{i})  \, =  \, ( \gj_{i}^{\perp_{\o}} / \, \gj_{i}, \bar \omega_{i}) \, .  $$
(Here $\bar \omega_{i}$ is the symplectic form induced on the quotient by the restriction of $\omega$ to $\gj_{i}^{\perp_{\o}}$.) The  sequence 
\eqref{eq:rs3} is constructed inductively
in such a way that $\gj_{i+1}$ is the preimage in $\gj_{i}^{\perp_{\o}}$ of the isotropic ideal $\bar \gj_{i+1}$, where
we put $\gj_{0} = \{ 0 \}$.
Note in particular, that 
\begin{itemize}
\item[\rm{(*)}]
$\gj_{i+1}$ is  an ideal in $\gj_{i}^{\perp_{\o}}$, for all $i= 0, \ldots, \ell-1$.
\end{itemize}
Sequences \eqref{eq:rs3} with this property will be called reduction sequences of isotropic subalgebras.
%$\gj_{i+1}$ is an ideal in $\gj_{i}^{\perp_{\o}}$. In general, we have: 

\bp There is a one-to-one correspondence between reduction
sequences of symplectic  Lie algebras as in \eqref{eq:rs2} and reduction sequences of isotropic subalgebras  (that is, sequences \eqref{eq:rs3} which satisfy (*) ).
\ep  
\pf
Consider a nested sequence of isotropic subalgebras $\gj_{i}$ as in  \eqref{eq:rs3}, which satisfies (*).  Since $\gj_{i+1}$ is 
an ideal in $\gj_{i}^{\perp_{\o}}$, $\gj_{i+1}^{\perp_{\o}}$ is a subalgebra of $\gg$ and  $\gj_{i+1}$ projects to 
an isotropic ideal $\bar \gj_{i+1}$ in $$ (\gg_{i}, \omega_{i}) := ( \gj_{i}^{\perp_{\o}}/ \, \gj_{i}, \bar \omega) \; , $$ so that the reduction of $(\gg_{i}, \omega_{i})$ with respect to $\bar \gj_{i+1}$ is isomorphic to the quotient 
$( \gj_{i+1}^{\perp_{\o}}/ \, \gj_{i+1}, \bar \omega)$. This shows
that nested sequences \eqref{eq:rs3} which satisfy (*) give rise to reduction sequences of the form \eqref{eq:rs2}.
\epf 

Let $(\bar \gg, \bar \o)$ be a reduction of $(\gg, \o)$ 
% with  respect to the isotropic ideal $\gi$ 
and let $\gj_{i}$  be a reduction sequence of isotropic subalgebras for $(\gg, \o)$.  As the following result shows, there is an induced reduction sequence 
on  $(\bar \gg, \bar \o)$. 
\bp \label{prop:inducedreduction}
Given a reduction sequence $\gj_{i}$, $i= 1, \ldots, \ell$,  of isotropic subalgebras for $(\gg, \o)$ and $\gi$ an isotropic ideal of $(\gg, \o)$, the sequence
\begin{equation} \label{eq:rs4}
 \gi \; \subseteq \;  \gi +\gj_{1} \cap  \gi^{\perp_{\o}} \; \subseteq \;  \gi  +  \gj_{2} \cap \gi^{\perp_{\o}}  \; \subseteq \, \ldots \subseteq  \, \; \gi +  \gj_{\ell} \cap \gi^{\perp_{\o}}
\end{equation}
is a reduction sequence of isotropic subalgebras for $(\gg, \o)$. 
\ep 
\pf Clearly, the sums $ \gi +  \gj_{i} \cap \gi^{\perp_{\o}}$ are isotropic subalgebras of $(\gg, \o)$ and they form a nested sequence, contained in $ \gi^{\perp_{\o}}$. It remains to verify that the subalgebra $ \gi +  \gj_{i+1} \cap \gi^{\perp_{\o}}$ is an ideal of  
\begin{equation} \label{eq:jk1}
\left(  \gi +  \gj_{i} \cap \gi^{\perp_{\o}} \right)^{\perp_{\o}} =  \gi^{\perp_{\o}} \cap   ( \gj_{i}^{\perp_{\o}} + \gi) = \gi +  \gj_{i}^{\perp_{\o}} \cap   \gi^{\perp_{\o}}  \; .
\end{equation}
Taking into account that $\gj_{i+1}$ is  an ideal in $\gj_{i}^{\perp_{\o}}$, this follows, since:  
$$
[  \gi +  \gj_{i}^{\perp_{\o}} \cap   \gi^{\perp_{\o}}\, , \,  \gi +  \gj_{i+1} \cap \gi^{\perp_{\o}} ] \; \subseteq  \; 
\gi +  [ \gj_{i}^{\perp_{\o}},  \gj_{i+1}] \cap \gi^{\perp_{\o}} \; \subseteq  \; \gi +  \gj_{i+1} \cap \gi^{\perp_{\o}}  \; . 
$$
\epf 

Let $(\bar \gg, \bar \o)$ be 
the reduction of $(\gg, \o)$ with 
respect to $\gi$. Then the sequence $\bar \gj_{i} =  \gi +\gj_{i} \cap  \gi^{\perp_{\o}}$ defines the induced reduction sequence for $(\bar \gg, \bar \o)$. \\

Now the following observation on inheritance of irreducible bases is fundamental:

\bp \label{prop:baseinheritance}
Let $(\gb, \omega_{\gb})$ be an irreducible base for the
symplectic Lie algebra $(\gg, \o)$. Then $(\gb, \omega_{\gb})$
is an irreducible base for every reduction  $(\bar \gg, \bar \o)$
of $(\gg, \o)$.
\ep
\pf
Let $\gj_{i}$ be a reduction sequence of isotropic subalgebras of $(\gg, \o)$ such that  
 its associated reduction sequence $(\gg_{i}, \omega_{i})$ has 
base $$ (\gg_{\ell}, \omega_{\ell}) = (\gj_{\ell}^{\perp_{\o}}/ \, \gj_{\ell}, \bar \o_{\ell}) =  (\gb, \omega_{\gb}) \; . $$
According to Proposition \ref{prop:inducedreduction}, for any isotropic ideal  $\gi$ of $(\gg, \omega)$ the base of the induced sequence for the reduction $(\bar \gg, \bar \o)$ with respect to $\gi$ is 
\begin{eqnarray} 
(\bar \gg_{\ell}, \omega_{\bar \gg_{\ell}})  &=  &   
\left( \,  (\gi +  \gj_{\ell}^{\perp_{\o}} \cap   \gi^{\perp_{\o}})  \big/ \, (\gi +  \gj_{\ell} \cap \gi^{\perp_{\o}}) \, , \, \bar \omega \,  \right) \\   
& =  & \left(  \, (\gj_{\ell}^{\perp_{\o}} \cap   \gi^{\perp_{\o}})  \big/ \, (\gi \cap  \gj_{\ell}^{\perp_{\o}} +  \gj_{\ell} \cap \gi^{\perp_{\o}}) \,  , \, \bar \o \, \right) \; . 
\end{eqnarray}
The two bases are related by the diagram of symplectomorphic  natural maps 
\begin{equation} \label{eq:lrarrow}
  (\gj_{\ell}^{\perp_{\o}} / \, \gj_{\ell}, \bar \o) \;  \longleftarrow \; (\gj_{\ell}^{\perp_{\o}} \cap   \gi^{\perp_{\o}} /  \,   \gj_{\ell} \cap  \gi^{\perp_{\o}}, \bar \o)  \; \longrightarrow \; (\bar \gg_{\ell}, \omega_{\bar \gg_{\ell}}) \; . 
\end{equation}
Obviously,  the left arrow is injective, while the right arrow is surjective. 
% Now suppose that $(\gg_{\ell}, \omega_{\ell})$ is an irreducible symplectic Lie algebra. 
Then, since 
$\gi \cap \gj_{\ell}^{\perp_{\o}}$ projects to an isotropic ideal in 
$(\gg_{\ell}, \omega_{\ell})$ (which must be trivial, by the irreducibility of $(\gg_{\ell}, \omega_{\ell})$), we have 
\begin{equation} \label{eq:inclusions}
 \gi \cap \gj_{\ell}^{\perp_{\o}} \, \subseteq  \, \gj_{\ell} \; \text{ and } \; 
(\gi \cap \gj_{\ell}^{\perp_{\o}})^{\perp_{\o}}  \supseteq \,    \gj_{\ell}^{\perp_{\o}} \; .  
\end{equation}
The first inclusion implies 
that $ \gi \cap \gj_{\ell}^{\perp_{\o}} = \gi \cap \gj_{\ell} \subseteq \ \gj_{\ell} \cap  \gi^{\perp_{\o}}$ and, therefore,  
$$ (\bar \gg_{\ell}, \omega_{\bar \gg_{\ell}})   \, = \,  ( \gj_{\ell}^{\perp_{\o}} \cap   \gi^{\perp_{\o}}/ \,  \gj_{\ell} \cap \gi^{\perp_{\o}}, \bar \o) \; .  $$
In particular,  the right arrow in \eqref{eq:lrarrow} is an isomorphism. 
Furthermore, 
\begin{eqnarray} 
 \gj_{\ell}^{\perp_{\o}} \cap   \gi^{\perp_{\o}} + \gj_{\ell} & =  &\left(( \gj_{\ell} + \gi ) \cap \gj_{\ell}^{\perp_{\o}} \right)^{\perp_{\o}} \\
& = & \left(  \gj_{\ell} + \gi  \cap \gj_{\ell}^{\perp_{\o}} \right)^{\perp_{\o}}  \\
& =  &  \gj_{\ell}^{\perp_{\o}}  \cap  \left( \gi  \cap \gj_{\ell}^{\perp_{\o}}\right)^{\perp_{\o}} \; .
\end{eqnarray} 
From the second inclusion in \eqref{eq:inclusions}, we infer that
 $\gj_{\ell}^{\perp_{\o}} \cap   \gi^{\perp_{\o}} + \gj_{\ell} =    \gj_{\ell}^{\perp_{\o}}$.  
Therefore, 
the left arrow in \eqref{eq:lrarrow} is surjective, hence it is an  isomorphism of
symplectic Lie algebras. This shows that the base $(\bar \gg_{\ell}, \omega_{\bar \gg_{\ell}})$ 
for the induced reduction sequence on 
$(\bar \gg, \bar \o)$ is isomorphic to $(\gb, \omega_{\gb})$.
\epf 

We can now deduce the following Jordan-H\"older type uniqueness statement for symplectic bases:
\begin{Th}[Uniqueness of the base] \label{thm:base_unique}
Let $(\gg, \o)$ be a symplectic Lie algebra. Then all its irreducible symplectic bases are isomorphic. 
\end{Th}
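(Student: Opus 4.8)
The plan is to argue by induction on $\dim \gg$, with Proposition \ref{prop:baseinheritance} carrying essentially all of the weight; once that inheritance statement is available, the uniqueness theorem drops out from a short and entirely formal Jordan-H\"older type argument. So I would emphasize at the outset that the substantive work has already been done, and that the theorem itself is a clean consequence.

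For the base of the induction I would treat the case in which $(\gg, \o)$ is itself irreducible. Then $\gg$ possesses no non-trivial isotropic ideal, so no reduction step is possible, and the only reduction sequence is the one of length zero whose base is $(\gg, \o)$. Hence the unique irreducible base is $(\gg, \o)$ and there is nothing to prove. This also settles, once and for all, that a reducible $\gg$ admits no length-zero complete reduction sequence, a point I will use below.

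For the inductive step I assume the statement for all symplectic Lie algebras of dimension strictly less than $\dim \gg$, and I let $(\gb_{1}, \o_{\gb_{1}})$ and $(\gb_{2}, \o_{\gb_{2}})$ be two irreducible bases of $(\gg, \o)$. If $(\gg, \o)$ is irreducible we are in the base case, so I may assume it is reducible. I then choose a complete reduction sequence realizing $\gb_{2}$ as a base; by the base case this sequence has length at least one, so its first step is a symplectic reduction of $(\gg, \o)$ with respect to a non-trivial isotropic ideal $\gi$, producing a reduction $(\bar \gg, \bar \o)$ with $\dim \bar \gg = \dim \gg - 2\dim \gi < \dim \gg$, while the remainder of the sequence exhibits $(\gb_{2}, \o_{\gb_{2}})$ as an irreducible base of $(\bar \gg, \bar \o)$. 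On the other hand $(\gb_{1}, \o_{\gb_{1}})$ is an irreducible base of $(\gg, \o)$, so Proposition \ref{prop:baseinheritance} guarantees that it is an irreducible base of \emph{every} reduction of $(\gg, \o)$, in particular of $(\bar \gg, \bar \o)$. Thus $\gb_{1}$ and $\gb_{2}$ are both irreducible bases of the lower-dimensional symplectic Lie algebra $(\bar \gg, \bar \o)$, and the induction hypothesis yields $\gb_{1} \cong \gb_{2}$ as symplectic Lie algebras.

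The only genuinely delicate point in this scheme is Proposition \ref{prop:baseinheritance} (which itself rests on the compatibility of reduction with induced reduction sequences established in Proposition \ref{prop:inducedreduction}); the induction above is purely formal once that is granted. The two minor verifications I would be careful to spell out are that a reduction step always decreases dimension, since it uses a non-trivial isotropic ideal, and that a reducible $\gg$ cannot have a length-zero complete reduction sequence, so that the chosen sequence for $\gb_{2}$ really does begin with a dimension-decreasing step feeding the induction. No serious obstacle remains beyond the already-proved inheritance proposition.
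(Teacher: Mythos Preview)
Your argument is correct and follows essentially the same Jordan--H\"older scheme as the paper: both proofs reduce the statement to Proposition \ref{prop:baseinheritance} via a short induction, passing to the first reduction in a complete sequence for one of the two bases and invoking the inheritance property for the other. The only cosmetic difference is the induction parameter: the paper inducts on the \emph{reduction length} (the maximal length of a proper reduction sequence), whereas you induct on $\dim \gg$. The paper's choice has the slight advantage that it builds the dimension-drop into the induction variable automatically, while in your version you must argue separately that the first step of the chosen sequence can be taken to be non-trivial; your remark about this is not quite airtight as stated (nothing in the definitions forbids a trivial first step), but it is trivially repaired by discarding any initial trivial reductions, since $(\gg,\o)$ reducible and $(\gb_2,\o_{\gb_2})$ irreducible forces at least one genuine step.
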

\pf We call a reduction sequence $(\gg_{i}, \omega_{i})$ proper if in each step
the dimension of $\gg_{i}$ is reduced. For any symplectic Lie algebra $(\gg, \o)$ define its reduction length $\ell(\gg, \o)$ to be the maximal length of a \emph{proper} reduction sequence of $(\gg, \o)$. We prove the theorem by
induction over the reduction length. Observe that $\ell(\gg, \o) = 0$ if and only if $(\gg, \o)$ is irreducible. Therefore, the theorem holds  for $\ell(\gg, \o) = 0$.
Now let $(\gg, \o)$ have reduction length $\ell(\gg, \o) \geq 1$. We further assume that the theorem is satisfied for all symplectic Lie algebras of reduction length less than $\ell(\gg, \o)$. Let $(\gb, \omega_{\gb})$ be an irreducible base for the
symplectic Lie algebra $(\gg, \o)$.  Let $(\bar \gg, \bar \o)$
be any further irreducible base for $(\gg, \o)$ and 
$
(\gg, \o), (\gg_{1}, \o_{1}), \, \cdots \,, 
(\gg_{\ell}, \o_{\ell}) = (\bar \gg, \bar \o)  
$ a proper reduction sequence. Observe that $\ell(\gg_{1}, \o_{1}) < \ell(\gg, \o)$ 
and that $(\bar \gg, \bar \o)$ is an irreducible base for $(\gg_{1}, \o_{1})$. Moreover, by Proposition \ref{prop:baseinheritance}, 
$(\gb, \omega_{\gb})$ is another base for $(\gg_{1}, \o_{1})$. By our induction assumption it follows that $ (\bar \gg, \bar \o)  $ and $(\gb, \omega_{\gb})$ are isomorphic symplectic Lie algebras.
This shows that $(\gg, \o)$ has only one irreducible base. 
\epf

\subsection{Completely reducible symplectic Lie algebras}
\label{sect:cr_algebras}
We consider the important class of 
symplectic Lie algebras which admit the
trivial symplectic Lie algebra as a base. 
\bd
A symplectic Lie algebra $( \gg, \o)$ is called \emph{completely  
reducible} if it can be symplectically reduced (in several steps) to the trivial symplectic algebra
$ \bar \gg = \{ 0 \}$. 
\ed

Completely solvable symplectic Lie algebras are 
completely reducible:
\bt \label{thm:cs_cr}
Let $(\gg, \o)$ be a symplectic Lie algebra, where $\gg$ is completely solvable. Then $(\gg, \o)$ is completely reducible.
\et
\pf
By Example \ref{Ex:cs}, every completely solvable symplectic 
Lie algebra is completely  
reducible by a series of (not necessarily normal) codimension 
one reductions. 
\epf 

In particular, if $\gg$ is nilpotent then $(\gg, \o)$ is completely reducible.
As we shall remark in Proposition \ref{prop:nilpotent_reduction} if $\gg$ is nilpotent, the reduction steps are central. Note further, that the solvable four-dimensional symplectic Lie algebra in Example \ref{ex:fdim_metab} is completely reducible but not completely solvable. \\

However, in general, a completely reducible symplectic Lie algebra need not be solvable.
A trivial remark is the following: \emph{If $(\gg, \omega)$ has a Lagrangian ideal then 
it is completely reducible}. Therefore, starting with a non-solvable flat Lie algebra 
$(\gh, \nabla)$,  we can construct a non-solvable symplectic Lie algebra $(\gg, \omega)$
by Lagrangian extension. This  symplectic Lie algebra $(\gg, \omega)$ is then 
completely reducible. The following example is of different kind: 

\begin{Ex}[Affine Lie algebras are completely reducible]
\label{ex:affine} 

Let $$ \mathfrak{aff}(n) =  \mathfrak{gl}(n,\mathbb{R}) \oplus \mathbb{R}^n$$  be the Lie algebra of the real affine group.
The Lie product for $\mathfrak{aff}(n)$ is thus given by the formula $$[(A,u), (B, v)] = ( AB -BA, Av - Bu ) \; . $$
The affine Lie algebra $\mathfrak{aff}(n)$ admits symplectic structures and 
all of them are symplectically isomorphic \cite{Agaoka}. According to 
\cite[Th\'eor\`eme 3.7]{MR1}, a symplectic form $\omega$ on $\mathfrak{aff}(n)$ is obtained by declaring 
$$ \omega\left(\,  (A,u), (B, v) \, \right) = \lambda ( Av - Bu ) + \kappa(M,  AB -BA) \; , $$
where $\lambda: \mathbb{R}^n \ra  \mathbb{R}$ is a linear map, $\kappa$ denotes the Killing form on  $\mathfrak{gl}(n,\mathbb{R})$, and $M$ is a diagonal matrix with pairwise distinct eigenvalues, such that the standard basis vectors are not in the kernel of $\lambda$. Observe that the translation ideal $$ \gt = \{ (0, u ) \mid u \in \mathbb{R}^n \}$$ is  isotropic  in the symplectic Lie algebra $(\mathfrak{aff}(n), \omega)$.
Let  $$ \gg_{\lambda} =  \{ A \in  \mathfrak{gl}(n,\mathbb{R}) \mid
\lambda (A ( \mathbb{R}^n)) = \{ 0 \}\} \; . $$Then $\gg_{\lambda}$ is 
a subalgebra of $\mathfrak{gl}(n,\mathbb{R})$, which is easily seen to be isomorphic to $\mathfrak{aff}(n-1)$. 
Since $\gt^{\perp_{\o}} = \gg_{\lambda} \oplus \gt$, 
the Lie algebra $\gt^{\perp_{\o}} / \gt$ of the symplectic reduction of $\mathfrak{aff}(n)$ with respect to $\gt$ is isomorphic to 
$\mathfrak{aff}(n-1)$. Repeating the reduction inductively, we thus obtain a complete reduction sequence of length $n$, which reduces $\mathfrak{aff}(n)$ to the trivial symplectic Lie algebra. 
\end{Ex}

Chu \cite{Chu} observed that four-dimensional symplectic Lie algebras are 
always solvable. (A list of all real four-dimensional symplectic Lie algebras
was presented subsequently in \cite{MR1}, see also \cite{Ovando}.) Here we show (independently of the just mentioned classification results\footnote{Alternatively, a short proof can be given using Proposition \ref{prop:irreducible}.}): 

 \bp \label{prop:dim4reducible}
 Every symplectic Lie algebra of dimension four 
 over the reals is completely reducible. \ep
 
 \pf Let $(\gg ,\o )$ be a  four-dimensional symplectic Lie algebra of symplectic rank zero. 
 Then $\gg$ has, in particular,  
 no one-dimensional ideals. This implies that $\gg$ is solvable, since 
a four-dimensional Lie algebra with non-trivial Levi decomposition is
necessarily reductive with one-dimensional center. Next we conclude that  
that $\ga := [\gg , \gg ]$ is at least two-dimensional, since otherwise $\gg$  
would clearly contain a one-dimensional ideal. The case $\dim \ga =3$ is also
excluded, because $\ga$ would then be either a Heisenberg algebra and its center
is a one-dimensional ideal of $\gg$, so this is not possible. Otherwise  $\ga$ can be abelian, and (since we are working over the reals) contains a weight vector of $\gh=\gg/\ga$, which spans a  one-dimensional ideal. 

Thus the ideal $\ga$ is a two-dimensional nilpotent Lie algebra 
 and, therefore, abelian. Furthermore, the abelian Lie algebra $\gh := \gg/\ga$ acts on $\ga$ by 
 a faithful representation 
 \[ \rho : \gh \ra \mathfrak{gl}(\ga ) \cong \mathfrak{gl}_2,\]
 without eigenvectors. In fact, an eigenvector for $\rho$ would span a one-\-dimen\-sio\-nal (isotropic) ideal. 
 Similarly, a one-\-dimen\-sio\-nal kernel of $\rho$ gives rise to an ideal in $\gg$.  
 Considering the possible Jordan normal forms, this implies that we 
 can choose a basis $\{e_1,e_2 \}$ of $\ga$ and $\{e_3,e_4\}$ of $\gh$ such that
 \[ \r (e_3) =  \left(\begin{matrix}
1 & 0\\
0&1 
\end{matrix}\right),\quad \r (e_4) =  \left(\begin{array}{rr}
0 & -1\\
1&0 
\end{array}\right).\]
 We observe that the cohomology class $[\a] \in H_\r^2(\gh , \ga )$ of the cocycle $\a$, which is associated with the extension
 $0\ra \ga \ra \gg \ra \gh\ra 0$ is trivial. This follows from the fact that $Z^2_\r (\gh ,\ga )= \bigwedge^2\gh^*\ot \ga=
 B^2_\r (\gh ,\ga )$. Therefore, $\gg$ is a semi-direct sum.
 % It is also true that $H^1_\r (\gh , \ga )=0$. 
Since every ideal of $\gg$ is $\gh$-invariant, one can see that the only ideal 
of dimension less than or equal to two is $[\gg ,\gg ]= \ga$. Next we determine all 
closed 2-forms on $\gg$:
% and check that $\ga$ is isotropic for all of them. 
Using the notation introduced in Theorem \ref{thm:noLg84} we note the relations
\[ \partial e^1 =e^{13}-e^{24},\quad  \partial e^2=e^{23}+e^{14},\quad \partial e^3= \partial e^4=0,\]
where $e^{ij}:= e^i\wedge e^j$. Thus 
we obtain 
\[ Z^2(\gg ) = B^2(\gg ) \oplus \spanof{ e^{34}},\quad B^2(\gg) = 
\spanof{ e^{13}-e^{24},e^{23}+e^{14}}. \]
This shows that $\ga$ is isotropic for all closed forms on $\gg$. Thereby proving that there does not
exist a four-dimensional symplectic Lie algebra of symplectic rank zero (over the real numbers).
 \epf

\paragraph{Nilpotent symplectic Lie algebras}
We call a symplectic Lie algebra $(\gg, \o)$ nilpotent if the underlying Lie algebra $\gg$ is nilpotent. 
In the context of nilpotent symplectic Lie algebras, central reduction
provides an important tool, because of the following 
observation (cf.\ \cite[Th\'eor\`eme 2.5]{MR1}): 

\bp \label{prop:nilpotent_reduction}
Let $(\gg, \o)$ be a nilpotent symplectic Lie algebra. Then 
there exists a finite series of central reductions to the trivial 
symplectic Lie algebra. (In particular, a nilpotent symplectic Lie algebra
$(\gg, \o)$ is symplectically irreducible if and only if it is trivial.)%
\ep
\pf Indeed, the center of every nilpotent Lie algebra is a non-trivial ideal. Every one-dimensional subspace of the center is an isotropic ideal. Therefore, every nilpotent symplectic Lie algebra admits an (at least one-\-dimensional) central isotropic ideal. 
\epf

\subsection{Symplectic length}
Let $( \gg, \o)$ be a symplectic Lie algebra. The symplectic length of 
$( \gg, \o)$ measures how close $(\gg, \omega)$ is to its irreducible 
symplectic base. 

\bd
The \emph{symplectic length} $sl(\gg, \o)$ 
of $(\gg, \o)$ is defined to be the minimum number of
steps which is required for symplectic reduction of  
$(\gg, \o)$ to its irreducible base. (That is, $sl(\gg, \o)$ 
is the minimum length of a complete reduction sequence
for $(\gg, \o)$.)
\ed 

Note, if $\dim \gg = 2k$ we have 
$sl(\gg, \o) \leq k  $. Furthermore, $sl(\gg, \o)= 0$
if and only if $(\gg, \o)$ is irreducible. 

\paragraph{Isotropic ideals and corank}

The symplectic length and the base of $(\gg, \o)$ 
relate to the rank, resp.\  corank, of $(\gg, \o)$ of as follows: 

\bt Let $(\gg, \o)$ be a symplectic Lie 
algebra and $(\gb, \o)$ its irreducible base.
Then $\corank (\gg, \o) \geq {1 \over 2} \dim \gb$ with equality
if and only if $sl(\gg, \o) \leq  1$. 
\et 
\pf  Indeed, we have $\corank (\gb, \o) = {1 \over 2} \dim \gb$ 
for every irreducible symplectic Lie algebra $(\gb, \o)$. 
Moreover, by Corollary \ref{cor:corank},  $\corank (\gg, \o) \geq
\corank (\gb, \o)$ if $(\gg, \o)$ reduces to $(\gb, \o)$. 

Now assume that equality holds. Then $(\gg, \o)$ has an 
isotropic ideal $\gj$ with $\corank \gj = {1 \over 2} \dim \gb$. 
Let $(\bar \gg, \bar \o)$ be the reduction of $(\gg, \o)$ with respect 
to $\gj$. By the definition of corank , we have $\dim \bar \gg = \dim \gb$.
By Theorem \ref{thm:base_unique} we know that $(\gb, \o)$ is 
a base for $(\bar \gg, \bar \o)$. Thus, $(\bar \gg, \bar \o) = (\gb, \o)$.
We deduce that $sl(\gg, \o) \leq  1$. Conversely, if  $sl(\gg, \o) \leq  1$ then 
either $(\gg, \o)$ is irreducible or $(\gg, \o)$ has a complete reduction 
sequence of length one. In the latter case, $(\gg, \o)$ has a maximal isotropic 
ideal $\gj$ of corank $ {1 \over 2} \dim \gb$. 
\epf

In particular, if $(\gg,\o)$ is not completely reducible then 
$\corank (\gg, \o) \geq 3$. This follows by Proposition \ref{prop:dim4reducible} (compare also Corollary \ref{cor:irreducible}), which implies that the dimension of any irreducible symplectic Lie algebra is at least six. 
 Furthermore, if the non-trivial symplectic Lie algebra $(\gg,\o)$  is completely reducible then $sl(\gg, \o) = 1$ if and only if $(\gg, \o)$ has a Lagrangian ideal. 
\begin{remark}

%In the definition of symplectic length we do not require 
%that the reduction steps are central.
%%\footnote{\bf Oliver: Indeed, this could
%%define another invariant: central symplectic length.}
%Hence,  $ sl(\gg) =1 $ if and only if $(\gg, \o)$ 
%admits a Lagrangian ideal. 
In Section \ref{sect:c_examples}, we  construct various 
examples of completely reducible  symplectic Lie algebras, which satisfy $sl(\gg,\o) > 1$.
\end{remark}

%Compare, for example, Proposition \ref{prop:Lagr_subalgs} below, which ensures the existence of Lagragian subalgebras in completely reducible symplectic Lie algebras.

\paragraph{Existence of Lagrangian subalgebras}
Inductive arguments with respect to the symplectic length 
can be a useful tool. %  in the context of completely reducible symplectic Lie algebras:
For example, 
if  $\bar \ga$ is 
a Lagrangian subalgebra of the reduction $(\bar \gg, \bar \o)$ then
also the lift $\ga$ of $\bar \ga$ defined in Section \ref{sect:Lifting} is Lagrangian.
A simple consequence is:

\bp \label{prop:Lagr_subalgs1}
Let $(\gg, \o)$ be a symplectic Lie 
algebra and $(\gb, \bar \o)$ its irreducible base. 
Then $(\gg, \o)$ has a Lagrangian subalgebra if and only 
if its base $(\gb, \bar \o)$ has a Lagrangian subalgebra. 
\ep 
\begin{proof}  
As follows from Proposition \ref{prop:iso_projection},  we may project any 
Lagrangian subalgebra $\ga$ of $(\gg, \o)$  to a Lagrangian subalgebra of $(\gb, \bar \o)$ along a reduction sequence with base $(\gb, \bar \o)$. 

We now show that $(\gg, \o)$ has a Lagrangian subalgebra if 
$(\gb, \bar \o)$ has a Lagrangian subalgebra.
The proof is by induction over the sympletic length. The statement is obvious 
for every irreducible symplectic algebra. 
 Assume now that $sl(\gg,\o) = k \geq 1$ and that
the statement holds for all symplectic Lie algebras with symplectic length  $<k$.  
Considering a complete reduction sequence of minimal length, 
we see that $(\gg, \o)$ reduces to a  symplectic Lie algebra  $(\bar \gg, \bar \o)$ 
with $sl(\bar \gg, \bar \o ) < k$.  By the induction assumption, there exists a Lagrangian 
subalgebra $\bar \ga$ in $\bar \gg$. Then the lift 
 $\ga$ of $\bar \ga$ 
 is a Lagrangian subalgebra of $(\gg, \o)$.
\end{proof}

\bc   \label{cor:Lagr_subalgs2}
Let $(\gg, \o)$ be a completely reducible symplectic  Lie 
algebra. Then $(\gg, \o)$ has a Lagrangian subalgebra. 
In particular, Lagrangian subalgebras exist if $\gg$ is completely solvable (in particular if $\gg$ is nilpotent). 
\ec 
\pf It follows from Example \ref{Ex:cs} that every completely solvable symplectic Lie algebra is completely reducible.
\epf 

We further remark that the irreducible symplectic Lie algebras introduced in Example \ref{ex:irreducible} do admit Lagrangian subalgebras, see Lemma \ref{lem:Lag_irr}. This shows that  \emph{every symplectic Lie algebra of dimension less than or equal to six has a Lagrangian subalgebra}. In Proposition \ref{prop:irr_noLag}, we shall construct an eight-dimensional irreducible symplectic Lie algebra, which does not admit a Lagrangian subalgebra.     

\subsection{Lagrangian subalgebras in irreducible symplectic Lie algebras} \label{sect:irreducible}

%Recall % from Section \ref{sect:reduction} 
%that a symplectic Lie algebra is called {\em irreducible} if it
%does not contain a non-trivial isotropic ideal. 
In this subsection we describe the structure of irreducible symplectic Lie algebras and their Lagrangian subalgebras. 
The underlying Lie algebras which belong to irreducible symplectic Lie algebras are metabelian and solvable of imaginary type. 
We use this characterization to show that there exist eight-dimensional irreducible symplectic Lie algebras which do not have a Lagrangian subalgebra, whereas all irreducible symplectic Lie algebras of lower dimension admit a Lagrangian subalgebra. 
%Therefore, this is the lowest dimension in which  such a phenomenon occurs.

\subsubsection{Structure of irreducible symplectic Lie algebras}
We start by presenting a family of six-dimensional symplectic Lie algebras,  which are all irreducible over the real numbers. Indeed, remarkably, \emph{every} symplectic form on the Lie algebra, which is exhibited in the following example, gives rise to an irreducible symplectic Lie algebra.

 \begin{Ex}[six-dimensional, irreducible]
 \label{ex:irreducible}
 Let $\gg$ be the semi-direct sum of a two-dimensional abelian Lie 
 algebra $\gh = \spanof{ e_5,e_6}$ and a four-dimensional abelian ideal 
 $\ga=\spanof{ e_1,\ldots , e_4}$, 
where $\gh$ acts on 
$\ga$ by the following representation $\rho : \gh \ra \mathfrak{gl}(\ga ) = \mathfrak{gl}_4$ :
\begin{equation*} % \label{rhorep} 
\quad \r (e_5) =  \left(\begin{matrix}
J & 0\\
0&0 
\end{matrix}\right),\quad \r (e_6) =  \left(\begin{array}{rr}
0 & 0\\
0&J 
\end{array}\right),\quad  J:=\left(\begin{array}{rr}
0& 1\\
-1&0 
\end{array}\right).\end{equation*}
Using the fact that every ideal of $\gg$ is $\gh$-invariant, one can easily check that the only ideals  
of dimension less than or equal to three are $\ga_1=\spanof{ e_1,e_2}$ and
$\ga_2 = \spanof{ e_3,e_4}$. 
Using the relations
\[ \partial e^1 = e^{25},\quad \partial e^2 = -e^{15},\quad \partial e^3=e^{46},\quad \partial e^4 = -e^{36},\quad \partial e^5= \partial e^6=0,\]
we compute
\begin{eqnarray*} B^2(\gg )  &=& \spanof{  e^{15},e^{25},e^{36},e^{46}} = e^5\wedge \ga_1^* \oplus e^6\wedge \ga_2^*,\\ 
Z^2(\gg ) &=& B^2(\gg ) \oplus 
\spanof{ e^{12},e^{34},e^{56}} = B^2(\gg ) \oplus {\bigwedge}^{\! 2} \ga_1^* \oplus  {\bigwedge}^{\! 2} \ga_2^* \oplus
{ \bigwedge}^{\! 2} \gh^* .
\end{eqnarray*}
Since $e^{12}+e^{34}+e^{56}\in Z^2(\gg )$ is non-degenerate, we see that the 
the symplectic forms on $\gg$ form a \emph{non-empty} open subset of
the seven-dimensional vector space $Z^2(\gg )$. 
Furthermore, \emph{every}
symplectic form $\o$ has non-zero coefficients over the 
two elements $e^{12}$ and $e^{34}$ of the basis
$\{ e^{12},e^{34},e^{56},e^{15},e^{25},e^{36},e^{46} \}$ of $Z^2(\gg)$. Therefore, the ideals $\ga_1$, $\ga_2$ 
are non-degenerate with respect to $\o$. This proves that the symplectic rank 
of  $(\gg ,\o )$  is zero for all symplectic forms $\o$ on $\gg$. \end{Ex} 

It is not a coincidence that the above example splits has a semi-direct product 
with respect to a non-degenerate abelian ideal. Indeed, we note the following preliminary structure result for irreducible symplectic Lie algebras, which is a direct consequence of \cite[Th\'eor\`eme 1.3] {DM2}:

\bp \label{prop:irreducible} 
Let $(\gg, \omega)$ be a symplectic Lie algebra, which is irreducible over the reals.  Then  $\ga = [\gg, \gg]$ is a maximal abelian ideal of $\gg$, and it is non-degenerate with respect to the form $\omega$. The abelian subalgebra $\gh = \ga^\perp$ acts faithfully on $\ga$ and $\ga$ decomposes as an orthogonal direct sum of two-dimensional irreducible subspaces. 
\ep 
%\pf Since $(\gg, \omega)$ is symplectic, $\gg \neq [\gg, \gg]$ (see \cite[p.\ 250]{MR1}).
%Therefore, $\gg$ has a non-trivial abelian ideal $\ga$. As for any ideal, $\gh = \ga^{\perp_{\omega}}$ is a subalgebra. Moreover, since $\ga$ is abelian, 
%$\ga \cap \ga^{\perp_{\omega}}$ is an (isotropic) ideal in $\gg$. 
%Hence, by irreducibility of $(\gg, \omega)$, we have $\ga \cap \ga^{\perp_{\omega}} = \{0 \}$
%\epf 

Let $\gh$ be an abelian algebra and $V$ a
module for $\gh$ over the reals. 
We call $V$ a \emph{purely imaginary 
module}  if all eigenvalues for the action of $\gh$ are purely imaginary. The following gives a strengthening of Proposition \ref{prop:irreducible} 
and also a converse.  
The result essentially classifies all irreducible symplectic Lie algebras
by associating to an abelian algebra $\gh$, $\dim \gh = 2k$, a set of real characters $\lambda_{i}$, $i = 1, \ldots, m$, $m \geq 2k$,
which satisfy certain conditions (see Corollary \ref{cor:irreducible} below).

\bt \label{thm:irreducible} 
Let $(\gg, \omega)$ be a real symplectic Lie algebra and let
$\ga = [ \gg, \gg]$ denote the commutator ideal of $\gg$.
Then $(\gg, \omega)$ is irreducible over the reals if and only 
if the following conditions hold:
\begin{enumerate}
\item 
The commutator ideal $\ga$ is
a maximal abelian ideal of $\gg$, which is non-degenerate with respect to $\omega$.
\item   The symplectic Lie algebra $(\gg, \o)$ is an orthogonal semi-direct sum of an 
abelian symplectic subalgebra $(\gh, \omega) $ and the ideal 
$([\gg, \gg], \omega)$. %  where $\gh$ acts faithfully on $\ga$.
\item The abelian ideal $(\ga,\o)$ decomposes into an orthogonal sum of two-dimensional  purely imaginary irreducible 
submodules for $\gh$,  which are
pairwise non-iso\-mor\-phic. 
\end{enumerate}
\et 
\pf For the first part of the proof let us assume that the symplectic Lie 
algebra  $(\gg, \omega)$ satisfies 1.\ and 3. 
(Note that condition 2.\ is implied by 1.)
% the assertions of Proposition \ref{prop:irreducible}. 
%We now discuss the structure of 
%$(\gg, \omega)$ in more detail. 
Put $\dim \gh = 2k$ and let $\ga = \oplus_{i = 1}^{m} \ga_{i}$ be a decomposition of $\ga$ into $\gh$-invariant non-degenerate and mutually orthogonal irreducible subspaces. Such a decomposition exists, by condition 3. Since the irreducible subspaces $\ga_{i}$ are all two-dimensional, there exists a complex structure $J$ on the vector space $\ga$ such that $\gh$ acts complex linearly on $\ga$. 
With respect to $J$, 
the irreducible subspaces $\ga_{i}$ are complex subspaces 
of $\ga$ and the restriction of $J$ to $\ga_{i}$ is uniquely defined  up to sign.
Since $\gh$ acts by purely imaginary transformations  on $\ga_{i}$, there exist non-zero characters $\lambda_{i}: \gh \ra \mathbb{R}$, $i = 1, \ldots, m$, such that, for all $v = \sum_{i=1}^{m} v_{i}$, $v_{i} \in \ga_{i}$ and, for all $H \in \gh$, we have 
$$[H, v ] \, =  \, \sum_{i=1}^{m} \lambda_{i}(H) J v_{i} \; . $$
Since $\omega$ is symplectic for $\gg$ and $\ga$ is abelian, $\gh$ necessarily acts by transformations on $\ga$,  which are skew with respect to $\o$.
In particular, we  see that the symplectic form $\omega$ on $\ga$ is preserved by the complex structure $J$. Since $\ga$ is maximal abelian,  the action of $\gh$ on $\ga$ is faithful. We also deduce that $m \geq 2k$. 

%Now we show that every Lie algebra $(\gg, \omega)$, which  satisfies the conclusion of  Proposition \ref{prop:irreducible} and
%has mutually distinct characters $\lambda_{i}$, 
%is symplectically irreducible. 

Now we can show that $(\gg, \omega)$ is irreducible.
Indeed, let
$\gj$ be an ideal of such a $\gg$. Then $[\gg, \gj]$ is an 
$\gh$-invariant subspace of $\ga$, which decomposes as a direct sum of irreducible subspaces for $\gh$. These are also irreducible subspaces for the action of $\gh$ on $\ga$. Since these are mutually non-isomorphic,  they coincide with some of the spaces $\ga_{i}$, which are non-degenerate and mutually
orthogonal. Assuming that $\gj$ is isotropic this shows that $[\gg, \gj] = \{ 0 \}$. That is, $\gj$ must be central. However, since $\gh$ acts faithfully, the center of $\gg$ is trivial. Hence, $(\gg, \o)$ is symplectically irreducible. 

For the second part of the proof consider now $(\gg, \omega)$,  which is irreducible. By Proposition \ref{prop:irreducible}, $(\gg, \omega)$ satisfies 1., 2.\ and, as is easily seen, also the first part of 3., which asserts that $\gh$ acts purely imaginarily on its irreducible subspaces.  Our next claim is that the characters $\lambda_{i}$, which as above belong to the action of $\gh$, are mutually distinct. Suppose to the contrary that $\lambda= \lambda_{i} = \lambda_{j}$, where $i \neq j$. Then 
$(\ga_{i} \oplus \ga_{j}, \omega)$ is a non-degenerate subspace and $H \in \gh$ acts by imaginary scalar-multiplication $v \mapsto \lambda(H) J v$ on $\ga_{i} \oplus \ga_{j}$. 
Evidently (compare the proof of Lemma \ref{lem:Lag_irr}), since $\omega$ is $J$-invariant, $\ga_{i} \oplus \ga_{j}$ contains a two-dimensional isotropic and $J$-invariant subspace. A fortiori, this subspace is also $\gh$-invariant and therefore is a non-trivial isotropic ideal of $(\gg, \omega)$. Since this contradicts the irreducibility of $(\gg, \omega)$, all the characters $\lambda_{i}$ must be mutually distinct. Hence, 3.\ is satisfied. Moreover, it is implied that every $\gh$-invariant subspace of $\ga$ that is irreducible coincides with one of the $\ga_{i}$, and, therefore, is non-degenerate with respect to $\omega$. 

\epf 

Let $\gh = \gh_{k}$ be an abelian Lie algebra of dimension
 $ \dim \gh = 2k \geq 2$ over the reals.
Let $\lambda_{i}: \gh \ra \mathbb{R}$, $i =1, \ldots, m$, be a set
of characters. 
% $m \geq 2k$ be a set of mutually distinct characters, which span $\gh^*$.
Let $V$ be a complex vector space of dimension $m$ and denote with $J : V \ra V$ the complex structure. Let 
$e_{i}$, $i =1, \ldots, m$, be a complex basis for $V$ 
with $\ga_{i} = \spanof{e_{i}}_{\mathbb{C}}$. 
Then,   for all $H \in \gh$ and $v = \sum_{i=1}^{m}  v_{i}$, $v_{i} \in \ga_{i}$, define 
\begin{equation} \label{eq:HJ}
  [H, v ]  \,  =  \,  \sum_{i=1}^{m} \lambda_{i}(H) J v_{i} \; . \end{equation}
Furthermore, we let 
$$ \gg_{k,\lambda_{1}, \ldots, \lambda_{m}}  = \; \, \gh_{k} \oplus V$$  be the real Lie algebra of dimension $2k + 2m$, which is the semi-direct product of the abelian Lie algebras $\gh_{k}$ and $V$ such that  $H \in \gh_{k}$ acts on $V$ by the above bracket \eqref{eq:HJ}. 
\begin{Cor}[Classification of irrreducible symplectic Lie algebras]
\label{cor:irreducible}
Let $(\gg, \omega)$ be an irreducible symplectic Lie algebra.
Then $\gg$ is isomorphic to a Lie algebra $\gg_{k, \lambda_{1}, \ldots , \lambda_{m}}$, where $k \geq 1$, $m \geq 2k$ and $\lambda_{1}, \ldots, \lambda_{m}$ is a set of mutually distinct non-zero characters that  span $\gh^*$. 
\end{Cor}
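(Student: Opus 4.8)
The plan is to read off the asserted normal form directly from Theorem \ref{thm:irreducible}, so that the genuine structural content is already in hand and what remains is to organize the data into the model $\gg_{k,\lambda_{1},\ldots,\lambda_{m}}$ and to verify the numerical constraints on the characters. Assume $(\gg,\omega)$ is irreducible and nontrivial. By Theorem \ref{thm:irreducible}, $\ga = [\gg,\gg]$ is a maximal abelian ideal which is non-degenerate for $\omega$, the algebra is an orthogonal semidirect sum $\gg = \gh \oplus \ga$ with $\gh = \ga^{\perp_{\omega}}$ abelian, and $\ga$ decomposes as an orthogonal sum of two-dimensional purely imaginary irreducible $\gh$-submodules $\ga_{1},\ldots,\ga_{m}$ which are pairwise non-isomorphic.

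First I would reconstruct, exactly as in the proof of Theorem \ref{thm:irreducible}, the complex structure $J$ on $\ga$ for which each $\ga_{i}$ is a complex line and $\gh$ acts complex-linearly, together with the characters $\lambda_{i}\colon \gh \to \mathbb{R}$ satisfying $[H,v] = \sum_{i=1}^{m}\lambda_{i}(H)\, J v_{i}$ for $v = \sum_{i} v_{i}$, $v_{i}\in\ga_{i}$. Viewing $(\ga,J)$ as a complex vector space $V$ of complex dimension $m$ and choosing a nonzero $e_{i}\in\ga_{i}$ for each $i$ yields a complex basis with $\ga_{i} = \langle e_{i}\rangle_{\mathbb{C}}$. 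Taking $\gh_{k} = \gh$ and retaining these same characters, the complex-linear identification $\ga \to V$ extended by the identity on $\gh$ is then a Lie algebra isomorphism $\gg \to \gg_{k,\lambda_{1},\ldots,\lambda_{m}}$, since both factors are abelian and the semidirect action matches \eqref{eq:HJ} by construction. Note that only a Lie algebra isomorphism is claimed, so the symplectic form need not be transported.

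It remains to check the conditions on $k$ and on the $\lambda_{i}$. Since $\gh$ is a symplectic (hence even-dimensional) subspace, write $\dim\gh = 2k$; the factor $\gh$ cannot be zero, for otherwise $\gg = \ga$ would be abelian and equal to its own commutator, hence trivial, whence $k\ge 1$. Each $\ga_{i}$ is a nontrivial irreducible module, so $\lambda_{i}\neq 0$, and the $\lambda_{i}$ are mutually distinct precisely because the $\ga_{i}$ are pairwise non-isomorphic. Finally, faithfulness of the $\gh$-action on $\ga$ (established in Theorem \ref{thm:irreducible}) shows that $H\in\bigcap_{i}\ker\lambda_{i}$ forces $[H,\ga]=\{0\}$ and hence $H=0$; thus $\bigcap_{i}\ker\lambda_{i}=\{0\}$, which is exactly the statement that the $\lambda_{i}$ span $\gh^{*}$. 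Since $\dim\gh^{*}=2k$, spanning requires $m\ge 2k$, giving the last constraint.

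The corollary carries essentially no new difficulty beyond Theorem \ref{thm:irreducible}: the one point requiring genuine care is the passage from the intrinsic purely imaginary action to the explicit complex model, namely verifying that a single compatible $J$ can be chosen on all of $\ga$ and that the resulting bracket is of the precise form \eqref{eq:HJ}. Once that bookkeeping is in place, the numerical conditions follow formally from faithfulness and the non-isomorphism of the submodules, and no further argument is needed.
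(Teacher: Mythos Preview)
Your proposal is correct and follows essentially the same approach as the paper: both derive the corollary directly from Theorem \ref{thm:irreducible} and its proof, extracting the complex structure, the characters $\lambda_i$, and then reading off the numerical constraints (distinctness from pairwise non-isomorphism, spanning of $\gh^*$ from faithfulness, and $m\ge 2k$ as a consequence). The paper's own proof is in fact a two-line reference back to Theorem \ref{thm:irreducible}, so your version simply spells out the details that the paper leaves implicit.
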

\pf Observe that since $\gh$ acts faithfully on $\ga$, the
set of characters   $\lambda_{1}, \ldots , \lambda_{m}$ spans $\gh^*$. Therefore, the result is a consequence of Theorem \ref{thm:irreducible} and its proof.
\epf 

Of course, up to isomorphism of Lie algebras, the characters $\lambda_{1}, \ldots, \lambda_{2k}$ may be chosen to form a any given standard basis for the dual space $\gh_{k}^*$. 

\subsubsection{Lagrangian subalgebras} \label{sect:irr_Lag}
In the following results we analyze the structure of Lagrangian subalgebras in irreducible symplectic Lie algebras. We start with an important example, which shows that, in general,  irreducible symplectic Lie algebras may have Lagrangian subalgebras.\footnote{\label{fn:Lag_alg2} Note (compare the first part of the proof of Proposition \ref{prop:Lag_irreducible}) that the Lie algebras $(\gg, \omega)$ in Example \ref{ex:irreducible} admit the structure of a (flat) K\"ahler Lie algebra in the sense of \cite{DM2}. The existence of the  Lagrangian subalgebra $\gb$ thus furnishes a counterexample to the assertion of \cite[Th\'eor\`eme 3.8]{DM2},  where it is mistakenly claimed that $\gg$ has no Lagrangian subalgebra.}

\bl \label{lem:Lag_irr} 
Let $\gg$ be the Lie algebra defined in Example \ref{ex:irreducible}.  Then, for every symplectic form $\omega$ on $\gg$, there exists a subalgebra $\gb$ of $\gg$ which is Lagrangian with respect to $\omega$.\el 
\pf  
 By the above $(\ga = [\gg, \gg], \omega) = (\ga_{1}, \omega) \oplus (\ga_{2}, \omega)$ is a non-degenerate ideal of $(\gg, \omega)$. Therefore, we
 have an orthogonal direct sum $(\gg, \o) = (\gh, \o) \oplus (\ga, \o)$, where $\gh$ is an abelian subalgebra of $\gg$. We find a basis as in Example \ref{ex:irreducible} such that $\gh = \spanof{e_{5}, e_{6}}$, $\ga_1=\spanof{ e_1,e_2}$ and
$\ga_2 = \spanof{ e_3,e_4}$. We can assume that
$\omega_{12}, \omega_{34} \in \{ \pm 1\} $.

Define  $H =  \omega_{34} \, e_{5} - \omega_{12} \, e_{6}$, 
$X = e_{1} + e_{3}$ and $Y = -  \omega_{34}\,  e_{2} +  \omega_{12} \, e_{4}$.  Then $\ga = \spanof{H,X,Y}$
is a subalgebra of $\gg$, which is Lagrangian with respect to $\omega$.
Indeed, $[X, Y ] = 0$, $[H, X ] = -  \omega_{34} \, e_{2} + \omega_{12}  \, e_{4} = Y$, $[H, Y ] = 
[H, [H, X ]] = - X$. Therefore, $\ga$ is a three-dimensional subalgebra. It remains to show that $\ga$ is isotropic with respect to $\omega$. Since the ideals $\ga_{1}$ and $\ga_{2}$ are orthogonal, we obtain 
$\omega(X,Y) = -\omega_{12}  \omega_{34} +  \omega_{34} \omega_{12} = 0$. Since $\gh$ is
orthogonal to $\ga$, $\omega(H,X) = \omega(H,Y)  = 0$. This shows that $\ga$ is isotropic. 
\epf 

The following proposition sheds some light on the general structure of Lagrangian subalgebras in irreducible 
symplectic Lie algebras.

\bp \label{prop:Lag_irreducible} 
Let $\gl$ be a Lagrangian subalgebra of an irreducible symplectic Lie algebra $(\gg, \omega)  =  (\gh, \omega) \oplus (\ga = [\gg, \gg], \omega)$. Let $\gb = \gl \cap \ga$ and 
$\gi \cong \gl/ \gb$  the subalgebra of $\gh$, which is the image of $\gl$ under projection to the first direct summand $\gh$ of $\gg$. Write $2 m = \dim \ga$, $ 2k = \dim \gh$. Then the following relations are satisfied: 
\begin{enumerate}
\item $ \dim \gl = m + k$.
\item $ m \, \geq \; \dim \gb \; \geq \, 2k$.
\item $  2 m - {\dim \gb} \, \geq \; 2 \dim \gi \, = \, 2 \, (m + k - \dim \gb)$.
\item $ k \,  \geq \; \dim \gh \cap \gl  \; \geq \, k + \dim \gb - m$.
\item $ 2k - \dim \gh \cap \gl \geq \dim \gi$.
\end{enumerate}
\ep 

\pf 
We use the notation as in the proof of Theorem \ref{thm:irreducible}. Let us further put
$\g(u,v) = \omega(Ju, v)$, for all $u, v \in \ga$. Then $\g$ defines a \emph{positive definite} 
$J$-hermitian inner product on $\ga$, such that 
$\gh$ acts by $\g$-skew maps. Indeed, it follows 
that the image of $\gh$ is a purely imaginary torus in the unitary Lie algebra $\gu(\g,J)$. This torus is diagonalized with respect to
the decomposition $\ga = \oplus_{i = 1}^{m} \ga_{i}$. 

With this remark in place we can turn to the proof of the proposition.
Since the kernel $\gb = \gl \cap \ga$ of the natural map $\gl \ra \gi$ acts trivially on $\gb$, the subalgebra $\gi$ of $\gh$ acts on the isotropic subspace $\gb$ of $\ga$. Since 
$\gamma$ is positive definite and $\gb$ is isotropic, we 
easily deduce that $\gb \cap J \gb = \{ 0 \}$ and, moreover, $\gb \oplus J \gb$ 
is a non-degenerate subspace with respect to $\o$. 
It follows that there is an $\gi$-invariant isotropic decomposition 
\begin{equation} \label{eq:bWjb}
\ga  = \gb \oplus W \oplus J \gb \, , \, \; \text{where } W =  \gb^{\perp_{\o}} \cap  J \gb^{\perp_{\o}} \; , 
\end{equation} 
 % where $W =  \gb^{\perp_{\o}} \cap  J \gb^{\perp_{\o}}$.
and  $\dim W = \dim \ga - 2 \dim \gb = 2m -  2 \dim \gb $. 
We further have $\dim \gg = 2k + 2m$, and,
by Corollary \ref{cor:irreducible}, 
$m \geq 2k$. Since $\gl$ is Lagrangian, this implies 
$$ \dim \gl = k + m \; \text{ and } \;  \dim \gi =  k + m - \dim \gb \; .  $$
Since $\gi$ acts purely imaginarily on 
$\gb$ and faithfully on $\ga$, we deduce from the 
decomposition \eqref{eq:bWjb} that 
$$  \dim \gi =  k + m - \dim \gb   \; \leq  \; {\dim \gb \over 2} + {\dim W \over 2}  =  m -  {\dim \gb \over 2}  \; .$$
We infer that 
$\dim \gb \geq  2k$, as well as $\dim W \leq 2 m- 4k$. 
Note that up to now we have established relations 1.-3.

Every complementary subspace $\bar \gi$ of $\gb$ in $\gl$ must be orthogonal to  $\gb$, since $\gl$ is isotropic. Therefore, we may choose such  $\bar \gi$ of the form $$ \bar \gi = \{ H + \tau(H) \mid H \in \gi \} \; , $$ for some
$\tau: \gi \ra W$. Moreover,
 $$ \ker \tau = \gh \cap \gl = \gi \cap \gl $$ is an isotropic
subalgebra of $\gh$. 
Now we also have $$ k \geq \dim \ker \tau \geq \dim \gi - \dim W = 
k + m - \dim \gb - \dim W = k -m + \dim \gb \; . $$ 
This proves the fourth relation. 

Finally, observe that $\gi$ is contained in the orthogonal of $\gh \cap \gl$ in $\gh$, since $\gl$ is isotropic. This implies $\dim \gi \leq 2k - \dim \gh \cap \gl$.  Therefore, the last part of the proposition follows. 
\epf

In particular, if $m$ is minimal with respect to $k = \dim \gh$, that is,  if $m = 2k$ then Lagrangian subalgebras of $(\gg, \omega)$ decompose into Lagrangian subalgebras of $\gh$ and $\ga$ respectively.

\bc \label{cor:Lag_irreducible} 
Let $\gl$ be a Lagrangian subalgebra of an irreducible symplectic Lie algebra $(\gg, \omega)  =  (\gh, \omega) \oplus (\ga = [\gg, \gg], \omega)$, which satisfies $\dim \ga = 2 \dim \gh$. Then there exist Lagrangian subspaces $\gi \subset (\gh, \o)$ and $\gb \subset (\ga, \o)$ such that 
$[\gi , \gb] \subseteq \gb$ and, moreover,
$ \gl  = \gi \oplus \gb $ is a 
semi-direct sum of subalgebras.
\ec 
\pf  The result follows directly from Proposition \ref{prop:Lag_irreducible}, where we have $m= 2k$.
Indeed, from 2.\ we infer that $\dim \gb = m = 2k$, and therefore
$\gb$ is a Lagrangian subspace of $(\ga, \omega)$. 
The third relation then implies $\dim \gi = k$, and by 4.\ 
also $\dim  \gi \cap \gl = k$. Therefore, $\gi = \gi \cap \gl$ is
contained in $\gl$ and $\gi$ is a Lagrangian subalgebra of $\gh$.
The corollary now follows. 
\epf 

Finally, we use Proposition \ref{prop:Lag_irreducible} 
to show that there exist certain irreducible symplectic Lie algebras that can not have Lagrangian subalgebras. 

\begin{Prop}[Irreducible with no Lagrangian subalgebra] \label{prop:irr_noLag}
Let $\gh_{1}$ denote a two-dimensional abelian Lie algebra and let $$ \Lambda = (\lambda_{1}, \lambda_{2}, \lambda_{3}) : \;  \gh_{1}  \, \longrightarrow  \,  \mathbb{R}^3 $$
be a character map, which satisfies the conditions: \begin{enumerate}
\item[i)] the two-dimensional subspace $\im \Lambda$
does not meet the six points whose coordinates are 
permutations of $( 0, 1, -1)$. 
\item[ii)] the characters satisfy $\lambda_{i} \neq \pm \lambda_{j}$, for $i \neq j$. 
\end{enumerate}
Let $(\gg, \o)$ be an eight-dimensional irreducible symplectic Lie algebra with $\gg$ isomorphic to $\gg_{1, \lambda_{1}, \lambda_{2}, \lambda_{3}}$, where $\lambda_{1}, \lambda_{2}, \lambda_{3}$ are as above. Then $(\gg, \o)$ has no Lagrangian subalgebra. \end{Prop}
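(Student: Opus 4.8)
The plan is to assume a Lagrangian subalgebra $\gl\subset(\gg,\omega)$ exists and to derive a contradiction from the numerical constraints of Proposition~\ref{prop:Lag_irreducible} together with hypotheses (i) and (ii). Here $k=\tfrac12\dim\gh=1$ and $m=\tfrac12\dim\ga=3$, so relation~1 gives $\dim\gl=4$ and relation~2 gives $\dim\gb\in\{2,3\}$, where $\gb=\gl\cap\ga$ and $\gi$ is the image of $\gl$ under projection to $\gh$; moreover $\dim\gi=\dim\gl-\dim\gb$. As in the proof of Proposition~\ref{prop:Lag_irreducible} one checks $[\gi,\gb]\subseteq\gb$, so $\gb$ is an isotropic subspace of $\ga$ invariant under $\gi$. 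Throughout I fix the $\gh$-invariant complex structure $J$ with $\gamma(u,v)=\omega(Ju,v)$ positive definite; then every $J$-invariant $2$-plane $P=\spanof{v,Jv}$ is non-isotropic, since $\omega(v,Jv)=-\gamma(v,v)\neq0$, and each weight space $\ga_i$ (a complex line) is non-degenerate.

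First I would dispose of the case $\dim\gb=2$, where $\dim\gi=2$ forces $\gi=\gh$, so $\gb$ is a $2$-dimensional $\gh$-invariant isotropic subspace of $\ga$. Each $\ga_i$ is a real-irreducible $\gh$-module of complex type, and by (ii) (so $\lambda_i\neq\pm\lambda_j$) these modules are pairwise non-isomorphic; hence $\ga$ is multiplicity-free and its only $2$-dimensional submodules are the $\ga_i$ themselves. Since each $\ga_i$ is non-degenerate, no such $\gb$ is isotropic, a contradiction.

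The main case is $\dim\gb=3$, where $\gi=\spanof{H}$ for some $H\neq0$ and $\gb$ is a $3$-dimensional $H$-invariant isotropic subspace, $H$ acting as $\lambda_i(H)J$ on $\ga_i$. If all $\lambda_i(H)\neq0$, then $H$ is semisimple with purely imaginary nonzero spectrum, so every real $H$-invariant subspace is even-dimensional, excluding $\dim\gb=3$. Hence some $\lambda_i(H)=0$; set $Z=\{i:\lambda_i(H)=0\}$. Because $\ker H=\bigoplus_{i\in Z}\ga_i$ and $\im H=\bigoplus_{i\notin Z}\ga_i$, any invariant subspace splits as $\gb=(\gb\cap\ker H)\oplus P$ with $P\subseteq\im H$ invariant, hence even-dimensional. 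If $|Z|=2$ then $\im H$ is a single non-degenerate $\ga_j$, forcing either $\gb\supseteq\ga_j$ (non-isotropic) or $\gb\subseteq\ker H$, a $3$-dimensional isotropic subspace of the $4$-dimensional non-degenerate space $\ker H$ (impossible). If $|Z|=1$, say $Z=\{1\}$, then $P\subseteq\ga_2\oplus\ga_3$ is even-dimensional and must be a $2$-plane (it cannot be $0$, lest $\gb\subseteq\ga_1$, nor all of $\ga_2\oplus\ga_3$), while $\gb\cap\ga_1$ is a line $\ell$; since $\ell\perp(\ga_2\oplus\ga_3)$ and $\ell$ is isotropic, $\gb=\ell\oplus P$ is isotropic if and only if $P$ is.

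The crux is thus to show $P$ cannot be isotropic. Condition (i) enters precisely here: if $\lambda_1(H)=0$ and $\lambda_2(H)=-\lambda_3(H)=\mu$, then $\Lambda(H)=\mu(0,1,-1)$, which for $\mu\neq0$ would put the point $(0,1,-1)$ into the subspace $\im\Lambda$, contradicting (i) (and $\mu=0$ forces $H=0$). So under (i) we necessarily have $\lambda_2(H)\neq-\lambda_3(H)$ with both nonzero, which means either $|\lambda_2(H)|\neq|\lambda_3(H)|$, whence the only invariant $2$-planes are $\ga_2,\ga_3$, or $\lambda_2(H)=\lambda_3(H)$, whence $H=\mu J$ on $\ga_2\oplus\ga_3$; in both situations every $H$-invariant $2$-plane is a complex line for $J$, hence non-isotropic by the opening remark. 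This contradicts the isotropy of $\gb$. I expect this last step to be the main obstacle: recognizing that the genuinely dangerous configuration is exactly an $H$ whose weights form the pattern $(0,\mu,-\mu)$ up to permutation --- where a ``conjugate-diagonal'' isotropic invariant plane appears in $\ga_2\oplus\ga_3$ --- and that forbidding precisely the six permutation points of $(0,1,-1)$ is what rules it out. Assembling the three cases proves that no Lagrangian subalgebra $\gl$ exists.
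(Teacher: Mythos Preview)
Your proof is correct and follows the same overall scheme as the paper: invoke the numerical constraints of Proposition~\ref{prop:Lag_irreducible} to reduce to the two cases $\dim\gb\in\{2,3\}$, then derive a contradiction from hypothesis~(ii) in the first case and hypothesis~(i) in the second.

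The execution differs in a noteworthy way in the case $\dim\gb=3$. The paper observes (via relation~4) that $\gi=\gh\cap\gl$, so $\gl=\gi\oplus\gb$ and $\ga=\gb\oplus J\gb$; hence $H$ acting on $(\ga,J)$ is the $\bC$-linear extension of the real operator $H|_\gb$, and its spectrum $\{i\lambda_1(H),i\lambda_2(H),i\lambda_3(H)\}$ must be conjugation-stable. This forces directly (since the set has odd cardinality) the pattern $(0,\mu,-\mu)$, contradicting~(i). Your argument instead avoids the real-structure observation and proceeds by a case analysis on $|Z|=|\{i:\lambda_i(H)=0\}|$, using semisimplicity of $H$ to split invariant subspaces along $\ker H\oplus\im H$ and then ruling out each possibility for the isotropic invariant plane $P$. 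Both routes land on the same obstruction, but the paper's complexification argument is shorter and more conceptual, while yours is more elementary and makes the role of the forbidden configuration $(0,1,-1)$ transparent from the outset. One small point worth stating explicitly in your write-up: the case $|Z|=3$ is excluded because the $\lambda_i$ span $\gh_1^*$ (Corollary~\ref{cor:irreducible}), so $\Lambda$ is injective and $\Lambda(H)=0$ forces $H=0$.
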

\pf 
It is easy to see that there exist characters $\lambda_{1}, \lambda_{2}, \lambda_{3}$ that satisfy 
the assumptions i) and ii). Also by Theorem \ref{thm:irreducible} there do exist irreducible symplectic Lie algebras $(\gg,\o)$ with $\gg = \gg_{1, \lambda_{1}, \lambda_{2}, \lambda_{3}}$. 
In the notation of Proposition \ref{prop:irreducible} these are of type $k= 1$ and $m = 3$.  

Assume now that there exists a Lagrangian subalgebra $\gl$ of 
$(\gg, \omega)$. Then $\dim \gl =4$. Consider the subalgebras 
$\gb = \gl \cap \ga$ and $\gi \cong \gl / \gb$ as in Proposition \ref{prop:Lag_irreducible}. We infer from Proposition \ref{prop:Lag_irreducible} that $3 \geq \dim \gb \geq 2$.

In the case $\dim \gb = 3$, we have $\dim \gi = 1$ and we deduce from 4.\ of Proposition \ref{prop:Lag_irreducible} that also $\dim \gi \cap \gl = 1$. Therefore, $\gi =  \gh \cap \gl$ is a Lagrangian 
subspace of $(\gh,\o)$ and $\gl= \gi \oplus \gb$. Since $\gb$ is Lagrangian in $(\ga, \o)$, there is an $\gi$-invariant 
isotropic decomposition 
% \begin{equation} \label{eq:bWjb}
$$
\ga  = \gb \oplus J \gb \, . 
$$ 
% \end{equation} 
Let $\gi = \spanof{H}$. Since $H$ preserves $\gb$, its action on
$\ga$ is the complexification of its restriction to $\gb$. Also, since 
$H$ has only imaginary eigenvalues except $0$, its complex eigenvalues on $\ga$ are $0$ and $\pm \lambda J$, for some $\lambda \in \mathbb{R}$. After scaling $H$ and rearranging the characters if necessary, we may thus 
assume that $\lambda_{1}(H) = 1 = -\lambda_{2}(H)$ and
$\lambda_{3}(H) = 0$. However, by assumption i) on
the $\lambda_{i}$, this is not possible. Therefore, $\dim \gb \neq 3$.

In the remaining case $\dim \gb = 2$, we necessarily have $\dim \gi = 2 = \dim \gh$  and $\gh \cap \gl = \{ 0 \}$. Since $\dim \gb = 2$, there is an $\gh$-invariant isotropic decomposition 
% \begin{equation} \label{eq:bWjb}
$$
\ga  = \gb \oplus W \oplus J \gb \, , \, \; \text{where } W =  \gb^{\perp_{\o}} \cap  J \gb^{\perp_{\o}} \; , 
$$ 
% \end{equation} 
 % where $W =  \gb^{\perp_{\o}} \cap  J \gb^{\perp_{\o}}$.
and $\dim W = 2$. Since $W$ is $J$-invariant, it is one of the complex irreducible subspaces $\ga_{i}$ of $\ga$, say $W = \ga_{1}$. Moreover, since $\gb$ defines a real structure for
the complex vector space $\gb \oplus J \gb$, it follows as above that $\lambda_{2} = - \lambda_{3}$. This contradicts 
assumption ii) on the characters $\lambda_{i}$. 
Therefore, $(\gg, \o)$ has no Lagrangian subalgebra.
 \epf  

%%%%%%%%%%%%%%%%%%%%%%%%%%%%%%%%%

\part{Symplectic Lie groups of cotangent type}

%%%%%%%%%%%%%%%%%%%%%%%%%%%%%%%%%%
\section{Symplectic geometry of cotangent Lie groups} 
\label{sect:cotangent_groups}
Before developing the Lagrangian symplectic extension theory  in a systematic way on the Lie algebra level in the following Section 
\ref{sect:Lagrange_ext}, we clarify the relation of Lagrangian extensions of Lie groups with symplectic Lie group structures on the cotangent bundle. We show that all extensions of Lie groups to symplectic Lie group structures on the cotangent bundle (which are assumed to be symplectic with respect to the standard symplectic geometry on the cotangent bundle) arise from Lagrangian extensions of flat Lie groups. Special cases of this problem have already been considered in \cite{Chu} and \cite{Lic}. 
As an application of the extension construction we also derive that 
every flat Lie group may be realized as a Lagrangian subgroup 
in a symplectic Lie group. 

\subsection{Cotangent Lie groups} \label{sect:cotangentLGs}
Let $G$ be a Lie group with Lie algebra $\gg$ and its
dual vector space $\gg^*$. The elements of $\gg^*$ are 
henceforth considered to be left-invariant one-forms on $G$. Let
$T G$ be the tangent bundle of $G$ and  let $T^*G$ denote the
cotangent bundle. If $\alpha$ is a one-form on $G$,  we denote with $$ \alpha_{g} \in \, T^{*}_{g} G = (T_{g} G)^*$$  its value 
% at the cotangent space of $G$ 
at $g \in G$. 
Since $G$ is a Lie group, we have the following natural trivialization of the cotangent bundle.

\bl \label{lem:gs1}
The map $G \times \gg^* \ra T^* G$ such that
 $(g, \lambda) \mapsto \lambda_{g}$ is a diffeomorphism
which commutes with the natural projections onto $G$. 
\el 

Let $\varrho: G \ra \GL(\gg^*)$ be a representation. We form the
semi-direct product Lie group 
$$ G \ltimes_{\varrho} \gg^*$$  by defining
the group product on the manifold $G \times \gg^*$ as 
\begin{equation}  \label{eq:ct1}
(g, \alpha) \cdot (h,\beta)  =  \; (g h \, , \, \varrho(h^{-1}) (\alpha)  + \beta )  \;. 
\end{equation}
There is thus a natural exact sequence of Lie groups 
$$  0 \ra \gg^* \ra G \ltimes_{\varrho} \gg^* \ra G \ra 0 $$
which splits by the homomorphism $g \mapsto (g, 0)$. 
Then the following holds (compare \cite{Lic}): 

\bl \label{lem:gs2} 
 Let $\alpha, \beta \in \gg^*$ be left-invariant one-forms. 
Under the identification map $G \times \gg^* \ra T^* G$ the
group product \eqref{eq:ct1} transported to $T^* G$ satisfies 
\begin{equation}   \label{eq:ct2}
\alpha_{g} \cdot \beta_{h} \; =  \;  (\varrho(h^{-1})  (\alpha)  + \beta)_{gh} \; . 
\end{equation}
The splitting $G \ra G \times \gg^* \ra T^* G$ corresponds to the zero-section $G \ra T^*G$ of the cotangent bundle. Moreover, for all $g \in G$, the left-multiplication $$ L_{g}: \, G \times \gg^* \, \ra  \, G \times \gg^*$$  
corresponds to the natural extension of the left-multiplication diffeomorphism $L_{g}: G \ra G$ on the cotangent bundle. 
Finally, the right-multiplication of elements in $G$ on $T^*G$  arises as the natural extension from $G$ if and only if $\varrho$ is the coadjoint representation. 
\el 
\pf 
Let $\Phi: G \ra G$ be a diffeomorphism.  For any one-form $\alpha$, the natural cotangent action $\Phi^*: T^* G \ra T^* G$ of $\Phi$ is expressed by the relation $$ \Phi^*  (\alpha_{h}) \,  {(d\Phi^{-1}})_{|_{h}}   \, = \,  \alpha_{h}  \, . $$ 
Equivalently, $\Phi^* \, (\alpha_{h} )  =  (\Phi^* \alpha)_{\Phi^{-1} h}$.
If $\alpha$ is left-invariant, and $L_{g}: G \ra G$ is the left-multiplication diffeomorphism, we thus have 
$$L_{g}^* \, (\alpha_{h} )  =  \alpha_{g^{-1}h} . $$ 
Correspondingly, by \eqref{eq:ct2}, left-multiplication with $0_{g} \in T^* G$ 
satisfies $$ 0_{g} \cdot \alpha_{h} = \alpha_{gh} . $$ 
Let $R_{g}: G \ra G$ be right-multiplication then similarly we deduce 
$$R_{g}^* \, (\alpha_{h} )  =   ((\mathrm{Ad}(g^{-1}))^* \alpha)_{h g^{-1}} . $$ 
Which compares to 
$$  \alpha_{h} \cdot   0_{g}  =  (\varrho(g)^{-1}  \alpha)_{hg} \; . $$
\epf
 
\paragraph{Natural symplectic structure on $T^*G$.}
As for any cotangent bundle there is a canonical one-form on $T^*G$, which, for all vector fields $X^*$ on $T^*G$  
satisfies $$ \theta_{\alpha_{g}} (X^*_{\alpha_{g}}) \, = \; 
 \alpha_{g}\left(\pi_{*}\left(X^*_{\alpha_{g}}\right)\right) . $$
Its derivative $$ \Omega = d Ê\theta $$
defines the natural symplectic structure on $T^*G$, which
is thus computed by the formula 
\be   \label{eq:omega1}
\Omega(X^*, Y^*) \; = \;   X^* \theta(Y^*) - Y^* \theta(X^*) - \theta([X^*, Y^*])  \; . 
\ee 

\paragraph{Evaluation on left-invariant vector fields.}
Note that left-invariant vector fields on the 
Lie group $G \ltimes_{\varrho} \gg^*$ project to left-invariant
vector fields on $G$, since the natural bundle projection $\pi: T^*G \ra G$ is a homomorphism of Lie groups.  
If $X^*$ is left-invariant, we denote 
with $X = \pi_{*}(X^*)$ its projection to a vector field 
on $G$. Let $\rho: \gg \ra \End(\gg^*)$ be the derivative of
the representation $\varrho: G \ra \GL(\gg^*)$ at the identity.
Since $\theta$ and $\Omega$ are preserved by the
cotangent action of diffeomorphisms of $G$, it follows from
Lemma \ref{lem:gs2} that both forms
are invariant by the left-multiplications with elements $g = (g, 0) 
\in G \ltimes_{\varrho} \gg^*$. A precise formula is contained in the following proposition: 

\bp \label{prop:omega}
For all  left-invariant vector 
fields $X^*,Y^*$ on the Lie
group $G \ltimes_{\varrho} \gg^*$,
there exist linear maps 
$\lambda, \mu: \gg \ra \bR$ such that, we have 
\begin{equation}   \label{eq:omega2}
\begin{split}   
\Omega(X^*,Y^*) (\alpha_{g}) \, = \, & \left(\rho(-X) \alpha\right)\, (Y) \;  -   \, (\rho(-Y) \alpha) \, (X) \,  - \, \alpha([X,Y]) \, \\  &  +      
 \lambda(Y) - \mu(X) \; .  
\end{split}
\end{equation}
\ep 
Let $\tau: \gg \ra \End(\gg)$ be the dual representation for $\rho$, so that,  for all $X, Y \in \gg$, we have 
$$  \alpha \left( \, \tau(X) (Y) \, \right)  \, =  \, (\rho(-X) \alpha) \, (Y) \, .$$   
Then $\tau$ defines a left-invariant flat connection $\nabla = \nabla^{\rho}$ on the Lie group $G$ by declaring 
\begin{equation} \label{eq:cotangentcon}
 \nabla^{\rho}_{X} Y =  \tau(X) (Y) \; ,
\end{equation} 
for all left-invariant vector fields $X, Y \in \gg$.\\ 

The following result can be seen as a generalization of a theorem of Chu % 's theorem
 \cite[Theorem 5]{Chu}, which asserts that for the coadjoint representation $\rho = {\mathrm{Ad^*}}$, $\Omega$
defines a left-invariant symplectic form on $G \ltimes_{\mathrm{Ad^*}} \gg^*$
if and only if $G$ is abelian. 
\bp \label{prop:omegali}
The canonical symplectic form $\Omega$ on $T^*G$ is left-invariant for the Lie group structure defined by the group  law \eqref{eq:ct2} if and only if the flat connection $\nabla^{\rho}$ is torsion-free.
\ep  
\pf Indeed, $\Omega$ is left-invariant, if and only if, for all $\alpha_{g} \in T^*G$, 
$$ \Omega(X^*,Y^*) (\alpha_{g})  \, = \, \Omega(X^*,Y^*) (0_{1}) , 
$$
for all
left-invariant vector fields $X^*, Y^*$. By formula \eqref{eq:omega2} this is the case if and only if, for all $\alpha \in \gg^*$, we have
$$   \left(\rho(-X) \alpha\right)\, (Y) \;  -   \, (\rho(-Y) \alpha) \, (X) \,  - \, \alpha([X,Y]) \, =  \, 0 .$$
Equivalently,  $\alpha\left( \, \tau(X) (Y) \,- \, \tau(Y) (X) \, -  [X,Y] \, \right) \, =  \, 0 $. 
\epf 

\begin{proof}[Proof of Proposition \mbox{\ref{prop:omega}}]
Let $\varphi$ be the one-parameter group on $T^*G$ tangent to the left-invariant vector field $X^*$. Using the coordinates introduced in Lemma \ref{lem:gs1}, we can write $\varphi(s) = a(s)_{\exp s X}$, where $a$ is a curve in $\gg^*$, with $a(0) = 0$. 
Similarly,  we can write $\psi(s) = b(s)_{\exp s Y}$, for the 
one-parameter group tangent to $Y^*$. Observe next that 
$$\theta(\alpha_{g})(X^*_{\alpha_{g}}) = \alpha_{g}(X_{g}) = \alpha(X) \; , $$
 for all $ \alpha_{g} \in T^* G $, where $\alpha \in \gg^*$. 
The curve $$ \alpha_g \cdot \varphi(s) = (\varrho( (\exp sX)^{-1})\, \alpha + a(s))_{g \exp sX}$$  is tangent to $X^*$ at $\alpha_{g}$.
Therefore, 
\begin{equation*} 
\begin{split}
 \left(X^* \theta(Y^*) \right)_{\alpha_{g}}  & = 
 \left.{\partial \over \partial s}\right|_{s=0}  \theta(Y^*)_{\alpha_g \cdot \varphi(s)}  \\
 &  =  \left.{\partial \over \partial s}\right|_{s=0} \left( \varrho( (\exp sX)^{-1} \alpha) (Y) +  a(s) (Y) \right) \\
 & =  \left(\rho(-X) \alpha\right)\, (Y) +  \lambda(Y) \; .
\end{split}
\end{equation*}
Analogously, we compute  $ \left(Y^* \theta(X^*)\right)_{\alpha_{g}} = 
  (\rho(-Y) \alpha) (X) + \mu(X)$. Thus \eqref{eq:omega2} follows by inserting into \eqref{eq:omega1}. 
\end{proof}

\bp\label{prop:Omega} The Lie algebra of the Lie group 
$T^*_{\varrho} G$ 
with the group structure \re{eq:ct2} identifies with 
$\gg \ltimes_\rho \gg^*$, where $\gg$ is the Lie algebra of $\gg$ and $\rho$ is the derivative of $\varrho$. 
The evaluation of the canonical
symplectic form $\Omega$ of $T^*G$ at the tangent space of the 
neutral element of the Lie group $T^*_{\varrho}G$ % with the group structure \re{eq:ct2}
coincides with the (skew-symmetric extension of) the canonical pairing between $\gg$ and $\gg^*$. In particular, if $\Omega$ is
left-invariant for the Lie group $T^*G$ then $G$ is a Lagrangian subgroup in the symplectic Lie group $(T^*G, \Omega)$. 
\ep

\begin{proof}   Evaluating \re{eq:omega2} at the neutral element we have $\Omega (X^*,Y^*)(0_1)= \l (Y) -\mu (X)$. 
This is zero if $X^*, Y^*\in \gg^* \subset  \mathrm{Lie}\, T^*G = \gg \ltimes_\rho \gg^*$, since then $X=Y=0$.
The description of $\l$ and $\mu$ in the proof of Proposition \re{prop:omega} shows that
$\l=0$ if $X^*\in \gg$ and $\mu=0$ if $Y^*\in \gg$. In fact, the one-parameter group $\varphi(s) = a(s)_{\exp s X}$
generated by $X^*=X\in \gg$ is given by the constant curve $a=0$ in $\gg^*$.  (Notice that the zero section 
$G \subset T^*G$ is a subgroup.) 
Thus $\l = a'(0)=0$, in this case. 
So we are left with the case $X^*=\a \in \gg^*$ and $Y^*=Y\in \gg$. Here we have  $\Omega (X^*,Y^*)(0_1)= \l (Y)=\a (Y)$, since 
$\l = a'(0)=\a$ is now computed from $a(s) = s \a$.  (Notice that $\gg^*= T_{0_1}^*G \subset T^*G$
is a subgroup.) 
\end{proof}

Recall that a Lie group $G$ with a flat and torsion-free left-invariant connection $\nabla$ is called a \emph{flat Lie group}. The \emph{affine holonomy} of $(G, \nabla)$ is the representation of the fundamental group of $G$, which comes from affine parallel transport along closed curves. The linear part of this representation is called the \emph{linear holonomy}. Every flat Lie group arises from an \'etale affine representation $\tilde G \ra \mathrm{Aff}(\mathbb{R}^{n})$, $n= \dim G$, of the universal covering group $\tilde G$ of $G$, see for example \cite[Section 5]{Ba}. The affine holonomy of $(G, \nabla)$ is trivial if and only if the associated \'etale affine representation of $\tilde G$ factors over $G$. The linear holonomy is trivial if and only if the linear part of the associated \'etale affine representation factors over $G$.

\bc \label{cor:cotangent_symplectic} 
There exists a Lie group structure on $T^*G$ with multiplication law of the form \eqref{eq:ct2} with respect to a representation $\varrho: G \ra \GL(\gg^*)$, which is symplectic for the canonical symplectic form $\Omega$ if and only if $G$ admits the structure of a flat Lie group with trivial linear holonomy. 
\ec 
\pf Indeed, if $(T^*_{\varrho }G, \Omega)$ is symplectic then Proposition \ref{prop:omegali} shows that $(G, \nabla^{\varrho})$ is a flat Lie group. Moreover, since $\nabla^{\varrho}$ is defined by \eqref{eq:cotangentcon}, we see that the linear part of the derivative of the \'etale affine representation belonging to $\nabla^{\varrho}$ is $\tau$. Since $\tau$ is the dual of the representation $\rho: \gg \ra \End(\gg^{*})$ which integrates to $\varrho$, the representation $\tau$ integrates to the dual representation for $\varrho$. This shows that 
$(G, \nabla^{\varrho})$ has trivial linear holonomy. We postpone the proof of the converse statement to the proof of Theorem \ref{thm:WeinsteinLG}. There we show that a flat Lie group with trivial holonomy has a cotangent extension $(T^*_{\varrho }G, \Omega)$.
\epf 

We call symplectic Lie groups of the form $(T^*G, \Omega)$ as above \emph{cotangent symplectic Lie groups}. Symplectic Lie groups of this type % Lagrangian extensions of flat Lie algebras 
provide  a rich
and easily accessible source of examples for symplectic Lie groups. This is illustrated by the following example, in which we construct nilpotent symplectic Lie groups of arbitrary large solvability degree:\footnote{As explained in the introduction, Example \ref{ex:uppertriang} disproves a conjecture (cf.\ \cite{Guan,Ovando}) that the solvability degree of a compact symplectic solvmanifold is bounded by three (or, more generally,
we disprove that there exists a  uniform bound at all).}

\begin{Ex}[Symplectic solvmanifolds with unbounded derived length] \label{ex:uppertriangquot}

Let $T_{n}$ be the nilpotent group of upper triangular 
$n\times n$ matrices with $1$ on the diagonal and 
$\gt_{n}$ its Lie algebra ($\gt_{n}$ is the Lie algebra 
of upper triangular matrices with zero diagonal). The Lie group 
$T_{n}$ acts simply transitively by left-multiplication on the affine 
subspace $T_{n }\subset \mathrm{Mat}(n,\bR)$, and therefore inherits a left-invariant torsion-free connection $\n$, since
the action is by affine transformations. Let us 
denote by $\varrho : T_{n} \ra \mathrm{GL}(\gt_{n}^*)$ 
the representation which is dual to the left-multiplication 
action of $T_{n}$ on $\gt_{n}$. 
Then we have $\nabla = \nabla^{\varrho}$, 
as defined in \eqref{eq:cotangentcon}, that is, $\varrho$
is the associated representation for the flat connection $\nabla$. (Compare also Example \ref{ex:uppertriang}.)
Let  $(G_{n}= T_{n }\ltimes_\varrho \gt_{n}^* \, , \Omega)$ be the associated  nilpotent cotangent symplectic Lie group. 
The nilpotent Lie group $G_{n}$ has a cocompact lattice $\Gamma$, since the structure constants of it Lie algebra are rational.  (For example, the semi-direct product of the subgroup $\Gamma_{n}$
of matrices in $T_{n}$ with integral entries with a suitable lattice in $ \gt_{n}^*$ will do.) The quotient manifold $$(\G \, \backslash \, G_{n} \, , \,  \Omega) $$ endowed with the induced symplectic structure is a compact symplectic nilmanifold (in particular, it is also a symplectic solvmanifold). Observe that the length $s_{n}$ of the derived series for $G_{n}$ is unbounded as a function of $n$ (in fact, $s_{2^m+1} = m+1$, cf.\ Example \ref{ex:uppertriang}.).  
%this shows that  there does not exist a uniform bound for the derived length of a symplectic solvmanifold. 
%This gives 
%a negative answer to Guan's question as soon as $n\ge 9$, that is,  if $\dim G \ge 18$. 
\end{Ex}

\subsection{Lagrangian subgroups and induced flat connection}
%  of symplectic Lie groups}
\label{sect:Lag_subgps}
Let $\mathcal L$ denote a Lagrangian foliation in a symplectic manifold $(M, \Omega)$. As for any foliation, the normal bundle for $\mathcal L$ carries a natural connection (Bott connection, see for example  \cite[Lemma 6.1.7]{CanCon}), 
which is flat in the directions tangential to $\mathcal L$. In case of a Lagrangian foliation in a symplectic manifold, this induces a torsion-free flat connection on any leaf $L$ of the foliation $\mathcal L$, cf.\  \cite[Theorem 7.7]{Weinstein1} and \cite{Dazord}. This  connection is called \emph{Weinstein connection} and it is defined by the formula 
\begin{equation} \label{eq:W_connection}
   \Omega(\nabla^{\mathcal L}_{X} Y , Z ) \; =  \; -  \Omega(Y, [X, Z]) + L_{X} (\Omega(Y,Z))  \; , 
\end{equation}
for all tangent vector fields $X,Y$ on $L$, and all sections 
$Z$ of $TM$. Weinstein \cite[Theorem 7.7]{Weinstein1} proved  
that every flat manifold, that is,  every manifold with a torsion-free flat connection $\nabla$,  appears as a closed leaf of a Lagrangian foliation of some symplectic manifold such that $\nabla$ is equal to the induced Weinstein connection.  
In the context of symplectic Lie groups we look at Lagrangian subgroups $L$ of symplectic Lie groups $(G, \Omega)$. Clearly,  the left-cosets of $L$ in $G$ define a left-invariant Lagrangian foliation ${\mathcal L}_{L}$ on $G$. We remark:

\bl Let $L$ be a Lagrangian subgroup of a symplectic Lie group $(G, \Omega)$. Then the Weinstein connection $\nabla^{{\mathcal L}_{L}}$ on $L$, which is induced by the foliation ${\mathcal L}_{L}$, is left-invariant and gives $L$ the structure of a flat Lie group.
\el 
\pf We may evaluate above \eqref{eq:W_connection} for left-invariant vector fields $X,Y$ on $G$, which are tangent to $L$. That is, $X,Y$ are contained in the Lie subalgebra $\gl$ of $\gg$ which is the Lie algebra of $L$. Then, for all $Z \in \gg$,  \eqref{eq:W_connection} reduces to 
\begin{equation}  \label{eq:W_connectionli}
   \Omega(\nabla^{\mathcal L}_{X} Y , Z ) \; =  \; -  \Omega(Y, [X, Z])  \; .
\end{equation}
Since $\Omega$ is left-invariant, this shows that $\nabla^{\mathcal L}_{X} Y$ is a left-invariant vector field and tangent to $L$. In particular, this implies that the connection $\nabla^{\mathcal L}$ is left-invariant. 
\epf 

Note that by  \eqref{eq:W_connectionli}, the restriction of the Weinstein connection $\nabla^{\mathcal L}$ to the Lie algebra $\gl$ defines a connection on $\gl$, which coincides with the canonical flat induced connection $\nabla^{\o}$ on the Lagrangian subalgebra $\gl$ of $(\gg, \o)$, as introduced in Proposition \ref{prop:tg}. In particular, the Weinstein connection on $L$ coincides with the left-invariant connection on $L$,  which is defined by $\nabla^{\o}$. \\

The following result is the analogue of Weinstein's theorem in the category of symplectic Lie groups.

\bt  \label{thm:WeinsteinLG}
Let $(H, \nabla)$ be a flat Lie group which has trivial {linear}  holonomy representation. Then there exists a symplectic Lie group $(G, \Omega)$  such that $(H, \nabla)$ is a Lagrangian (closed) subgroup of $(G, \Omega)$ and $\nabla$ coincides with the
induced flat Weinstein connection. In particular,
a simply connected Lie group $H$ admits the structure of a flat Lie group if and only if it is a simply connected Lagrangian subgroup of a symplectic Lie group.  
\et 
\pf  
Let $\rho: \gh \ra \End(\gh^{*})$ be the dual representation of $\tau: X \mapsto \nabla_{X}$. Since the linear holonomy representation for $\nabla$ is trivial, $\tau$ integrates to a representation of $H$. Therefore, $\rho$ integrates to a representation $\varrho: H \ra  \GL(\gh^{*})$. Let $(G= T_{\varrho}^{*} H, \Omega)$ be the cotangent symplectic Lie group with respect to $\varrho$  (that is, $G = H \ltimes_{\varrho} \gh^*$ is satisfying the group law \eqref{eq:ct2}, and it  is equipped with the canonical cotangent symplectic form $\Omega$). By Lemma \ref{lem:gs2}, 
$H$ is the zero-section in the cotangent space $T^{*} H$ 
and it is therefore a closed Lagrangian subgroup of $T^{*}_{\varrho} H$. Let $\nabla^{\rho}$ be the canonical left-invariant flat connection on $H$, which is attached to $\varrho$ by \eqref{eq:cotangentcon}. By duality, we observe that $\nabla^{\rho} = \nabla$. Therefore, $\nabla^{\rho}$ is torsion-free and, by Proposition \ref{prop:omegali}, $\Omega$ is a left-invariant symplectic form on $T^{*}_{\varrho} H$. 
Hence, $H$ is a closed Lagrangian subgroup of the cotangent symplectic  Lie  group $(T^{*}_{\varrho} H, \Omega)$. According to \eqref{eq:W_connectionli}, 
we have
$$   \Omega(\nabla^{\mathcal L}_{X} Y , \alpha ) \; =  \; -  \Omega(Y, [X, \alpha])  \; = \;  -  \Omega(Y, \rho(X) \alpha)   \; $$
for left-invariant vector fields $X, Y, \alpha$ on $T^{*}_{\varrho} H$, where  $X, Y \in \gh$ and $\alpha \in \gh^{*}$. By Proposition \ref{prop:Omega}, $\Omega$ induces the duality pairing between $\gh$ and $\gh^{*}$. Hence, we can deduce that $\nabla = \nabla^{\mathcal L}$ is the Weinstein connection.
\epf 

%%%%%%%%%%%%%%%%%%%%%%%%%%%%%%%%%
\section{Lagrangian extensions of flat Lie algebras} 
\label{sect:Lagrange_ext} 
In this section we describe in detail the theory of Lagrangian extensions of flat Lie algebras and its relation to Lagrangian reduction. Lagrangian extensions of flat Lie algebras generalize symplectic cotangent Lie groups as introduced in Section \ref{sect:cotangent_groups} on the infinitesimal level. 
% appear as a special case of these constructions. 
Our main observation is that the isomorphism classes of Lagrangian symplectic extensions of a flat Lie algebra 
$(\gh, \nabla)$ are parametrized 
by  a suitable restricted cohomology group $H^2_{L, \nabla}(\gh, \gh^*)$. This
describes the Lagrangian symplectic extension theory in a manner analogous to the Lie algebra case. 

\subsection{Lagrangian extensions and strongly polarized symplectic Lie algebras}
Let $(\gh ,\n )$ be a flat Lie algebra, that is, a 
Lie algebra endowed with a flat torsion-free connection $\n$. 
We explain how to construct symplectic Lie algebras $(\gg, \omega)$,
which have $\gh$ as quotient algebra arising in a Lagrangian reduction and $\nabla$ as the induced quotient flat connection, as described in Section~\ref{NormalSubsect}. These Lie algebras are called \emph{Lagrangian extensions}  of $(\gh ,\n) $. \\

Since $\n$ is a flat connection, the association $u \mapsto \nabla_{u}$ defines a representation $\gh \ra \End(\gh)$.  
We denote by $\rho: \gh \ra \End(\gh^*)$ the dual representation, which satisfies 
\begin{equation} \label{rhoEqu} \rho (u)\xi := -\n_u^*\xi = -\xi \circ \n_u , \quad  u \in \gh,\,  
\xi \in \gh^*.
\end{equation}
Define $Z^2_{\n} (\gh ,\gh^*) = Z^2_{\rho} (\gh ,\gh^*)$. Every cocycle $\a \in Z^2_{\n} (\gh ,\gh^*)$ thus gives rise 
to a Lie algebra extension 
$$ 0  \ra \gh^*   \ra \gg_{\n,\alpha} \ra \gh \ra 0 \, \; , $$
where the non-zero Lie brackets $ [ \, , \, ]_{\gg}$ for $\gg = \gg_{\n,\alpha}$ are defined 
on the vector space direct sum $\gg = \gh \oplus \gh^*$ by the formulas 
\begin{eqnarray} \label{eq:f1}
 [u,v]_\gg  & = & [u,v]_\gh + \a (u,v), \hfill  \quad\text{for all $u,v \in \gh$},  \\
 \label{eq:f2}
 \lbrack u,\xi  \rbrack_\gg & = &  \rho (u)\xi,  \hfill  \quad\text{for all $u \in \gh, \, \xi  \in \gh^*$}.
\end{eqnarray}
We let $\omega$ be the non-degenerate alternating two-form on 
$\gg$, which is defined by the dual pairing of $\gh$ and $\gh^*$. (Namely 
$\gh$ and $\gh^*$ are $\omega$-isotropic subspaces of $\gg$, and 
$ \o (\xi ,u) = - \o(u, \xi) = \xi (u)$, for all 
$ u  \in \gh$, $\xi \in\gh^*$.) 

\bp \label{prop:ext_condition}
The form $\o$ is symplectic for the Lie-algebra  $\gg_{\n,\alpha}$
if and only if 
\begin{equation} \label{aEqu} 
% \o (\a (u,v),w) + \o (\a (u,v),w) + \o (\a (u,v),w)=0 \; ,
\a (u,v) (w) +  \a (w,u) (v) +  \a (v,w) (u)=0 \; ,
\end{equation}
for all $ u,v,w \in \gh$. 
\ep
\pf It suffices to check that $\o$ is closed. Clearly, 
the equation \re{aEqu}
is equivalent to the vanishing of  $ \partial \o$ on $\gh \times \gh \times \gh$. Moreover,  $ \partial \o$ 
vanishes on $\gh^* \times \gh^* \times \gg$, since the subspace 
$ \gh^*$ is an abelian ideal in $\gg$, which is isotropic with respect to $\o$.  Finally, we obtain for
$u,v\in \gh$ and $\xi \in \gh^*$: 
\begin{eqnarray*} \o ([u,v],\xi ) +  \o ([v,\xi ],u) + \o ([\xi,u],v)
&=& -\xi ([u,v])- \xi (\n_v u) + \xi (\n_u v)\\ 
&=& \xi (T^\n (u,v)) \; \, .
\end{eqnarray*} 
Now the latter term vanishes, since $\nabla$ is torsion-free.
\epf 

It is customary (cf.\ \cite{Weinstein2}) to call a choice of a Lagrangian foliation in 
a symplectic manifold a \emph{polarization} of the manifold. This motivates the following terminology:   
\bd A polarization for a  symplectic Lie algebra $(\gg ,\o )$ 
is a choice of a Lagrangian subalgebra $\gl$ of $(\gg ,\o )$. 
A \emph{strong polarization} of a symplectic Lie algebra $(\gg ,\o )$ is a pair $(\ga , N)$ consisting 
of a   Lagrangian ideal $\ga \subset \gg$  
and a complementary Lagrangian subspace $N\subset \gg$.
The quadruple $(\gg ,\o , \ga , N)$ is then called a
\emph{strongly polarized symplectic Lie algebra}. An {isomorphism} 
of strongly polarized symplectic Lie algebras $(\gg ,\o , \ga , N)\ra 
( \gg' ,\o' , \ga' , N')$ is an isomorphism  
of symplectic Lie algebras $(\gg ,\o )\ra (\gg' , \o')$ which maps the
strong polarization $(\ga , N)$ to the strong polarization  $(\ga' , N')$. 
\ed  

With this language in place, we can summarize the above 
as follows:

\bt \label{thm:framed_symplectic}
Let $(\gh ,\n )$ be a \emph{flat Lie algebra}. 
To every  two-cocycle $\a \in Z^2_{\nabla} (\gh ,\gh^*)$ 
which satisfies  \eqref{aEqu} 
one can canonically
associate a strongly polarized symplectic Lie algebra 
$ F(\gh , \n ,\a ) := (\gg_{\nabla,\alpha} ,\o , \gh^* , \gh)$, whose
Lagrangian reduction has associated quotient flat Lie algebra  $(\gh ,\n )$.
%The Lie algebra  $\gg = \gg_{\rho ,\a }$ is defined as direct sum 
%of vector spaces $\gg = \gh +\gh^*$, where $\ga = \gh^*$ is an abelian ideal
%with the action of $\gg/\ga \cong \gh$ defined by $\rho=\rho_\n$, see \re{rhoEqu}, 
%and the Lie bracket on the subspace $U=\gh$ is defined by 
%\[ [X,Y]_\gg = [X,Y]_\gh + \a (X,Y),\quad\mbox{for all}\quad X,Y\in \gh.\]  
%The symplectic form $\o$ is defined by the natural pairing
%of $\gh^*$ with $\gh$:
%\[ \o (\xi ,X) = \xi (X),\quad \o (X,Y) = \o (\xi ,\eta )=0,\quad
%\mbox{for all}\quad X,Y\in \gh,\quad \xi, \eta\in\gh^*.\]   
%
\et  

\vspace*{1ex}
\noindent 
We call the symplectic Lie algebra $(\gg_{\nabla,\alpha} ,\o)$ 
the \emph{Lagrangian extension} of the flat Lie algebra $(\gh , \n)$ with respect to $\alpha$. 

\begin{Ex}
Note that every flat Lie algebra has at least one Lagrangian extension, using the zero-cocycle $\alpha \equiv 0$. This is the 
\emph{semi-direct product} Lagrangian extension 
$(\gh \oplus_{\nabla} \gh^*, \omega)$ or \emph{cotangent} Lagrangian extension. 
In particular, this 
shows that \emph{every flat Lie algebra arises as a quotient from a Lagrangian reduction}. Theorem \ref{thm:WeinsteinLG} proves that the simply connected symplectic Lie group $G$ with symplectic Lie algebra $(\gh \oplus_{\nabla} \gh^*, \omega)$ 
is indeed a cotangent symplectic Lie group $(T^*H, \Omega)$ as introduced in Section \ref{sect:cotangentLGs}.
\end{Ex} 

%Lagrangian extensions of flat Lie algebras provide  a rich
%and easily accessible source of examples for symplectic Lie algebras. This is illustrated by the following example, in which we construct nilpotent symplectic Lie algebras of arbitrary large solvability degree:\footnote{As explained in the introduction, Example \ref{ex:uppertriang} disproves a conjecture (cf.\ \cite{Guan,Ovando}) that the solvability degree of a compact symplectic 
%nilmanifold is uniformly bounded.}
%
We discuss now the series of nilpotent symplectic Lie groups $(G_{n}, \Omega_{n})$ constructed in Example \ref{ex:uppertriangquot} on the infinitesimal level. %  and give some more details: 

\begin{Ex}[Nilpotent with unbounded derived length] \label{ex:uppertriang}
Let $\gt_{n}$ denote the Lie algebra of upper triangular matrices 
with zero diagonal. This Lie algebra is nilpotent of class $n-1$. 
Its derived length (or solvability length) $s_{n}$ satisfies $s_{n} = \min\{ k \mid 2^k \geq n \}$. 
In particular, $s_{2^m} = 
s_{2^m-1} = m$, for $m \geq 2$,  and $s_{2^m+1} = m+1$. 
Hence, the family of Lie algebras $\gt_{n}$, $n \in \mathbb{N}$, has unbounded solvability degree.
Recall that the Lie algebra $\gt_{n}$ carries a natural torsion-free flat connection $\nabla$,  which arises from matrix multiplication. Indeed, for $A,B \in \gt_{n}$, this connection is 
defined by $$  \nabla_{A} B  \, = \, A \cdot B    \; .  $$
% Clearly, $\nabla$ is a torsion-free flat connection. 
Now let  $$ (\gg_n, \omega) =  (\gt_{n} \oplus_{\nabla} \gt_{n}^*, \omega)$$ 
be  the semidirect-product Lagrangian extension of the flat Lie algebra $(\gt_{n}, \nabla)$. This extension therefore is a nilpotent symplectic Lie algebra which has nilpotency class $n-1$. 
The surjective homomorphism $\gg_{n} \ra \gt_{n}$
shows that the solvability degree $s(\gg_{n})$ of $\gg_{n}$
is bounded from below by $s_{n}$. Also, it is easy to see that 
$s(\gg_{2^m+1}) = m+1$. 
\end{Ex}

\subsection{Extension triples associated to Lagrangian reduction}
\label{sect:Lfunctoriality}
We shall prove now that 
every symplectic Lie algebra $(\gg ,\o )$, which has a Lagrangian ideal $\ga$ arises as a Lagrangian extension of a flat Lie algebra. \\ 
%Since $\ga$ is an abelian ideal, the quotient Lie algebra  $\gh=\gg /\ga$ acts
%on $\ga$ by the restriction $\rho$ of the adjoint representation of $\gg$. 
%Let  $(\gg ,\o )$ be a symplectic Lie algebra which 
%admits a Lagrangian ideal $\ga$. 

\noindent 
Recall from Proposition 
\ref{prop:induced_fc} and \eqref{eq:nablao} that the associated flat torsion-free connection $\n=\bar \n^\o$ on the quotient Lie algebra $\gh=\gg /\ga$ satisfies the relation 
\begin{equation} \label{nEqu} \o_{\gh} (\n_u v, a) = -\o ( \tilde v, [ \tilde u, a]),
\; \text{ for all $u, v\in \gh, \, a \in \ga$.}\end{equation}
(Here, $\tilde u, \tilde v \in \gg$  denote respective lifts of $u,v$.)
The pair $(\gh, \nabla)$ is called the \emph{quotient flat Lie algebra} associated to the Lagrangian ideal $\ga$ of $(\gg, \omega)$.

\bt  \label{thm:ext_cocycle}
Let $(\gg ,\o, \ga , N)$ be a strongly polarized symplectic Lie algebra
and  $(\gh, \nabla)$ its associated quotient flat 
Lie algebra. Then there exists 
$$\alpha  = \alpha_{(\gg ,\o, \ga , N)} \in  Z^2_{\nabla}(\gh, \gh^*)
$$ satisfying \eqref{aEqu}, 
such that $(\gg ,\o ,\ga , N)$ is isomorphic to $F(\gh , \n ,\a )$. 
\et

\pf 
Observe first that, since $\ga$ is abelian, there is a well defined adjoint representation $\ad_{\gh,\ga}$ of $\gh$ 
on $\ga$, which satisfies   
\begin{equation}  \ad_{\gh,\ga} (u)   a   =   [u, a],  \; 
\text{ for all $u \in \gh,\, 
 a \in \ga$.}\end{equation}

Let $\pi_{\ga}: \gg \ra \ga$ be the 
projection map which is
induced by the strong polarization $\gg = \ga \oplus N$.
For $u,v \in \gh$,  let $\tilde u, \tilde v \in N$ denote their
lifts in $N$ with respect to the quotient homomorphism $\gg \ra \gh$. 
The expression  \begin{equation} \label{eq:talpha}
\tilde \alpha(u,v) = \pi_{\ga}( [ \tilde u, \tilde v ] ) 
\end{equation} 
then defines a two-cocycle
$\tilde \alpha \in Z^2_{\ad}(\gh ,\ga )$ 
for the representation $\ad_{\gh,\ga}$.

Let $\iota_{\o}: \ga \ra \gh^*$, $a \mapsto \omega(a, \cdot)$ be 
the identification of $\ga$ with $\gh^*$, which is induced by
$\omega$. With these definitions,
the  equation \re{nEqu} is equivalent to the relation 
\begin{equation} \label{eq:rho_omeg}
 \rho(u) \circ \iota_\omega =  \iota_\omega \circ  \ad_{\gh,\ga} (u) \; . 
\end{equation}
In particular, this shows that the representation $\rho$ of $\gh$ on $\gh^*$, which belongs to the flat connection $\nabla$ by \eqref{rhoEqu} 
is equivalent to $\ad_{\gh,\ga}$. 
%And $\phi_\o$ intertwines the two representations. 

Let $\pi_{\gh}: N \ra \gh$ be the isomorphism of vector spaces
induced by the quotient map $\gg \ra \gh$. The isomorphisms  
$\pi_\gh$ and $\iota_\o$ 
assemble to an isomorphism $$ 
\pi_\gh \oplus \iota_\o: \;
\gg = N \oplus \ga \,  \longrightarrow  \, \gh \oplus  \gh^* \; . $$
Define \begin{equation} \label{eq:alpha}
\alpha= \iota_{\o} \circ \tilde \alpha \; \, \in Z^2_{\rho}(\gh, \gh^*)
\end{equation} to be the push-forward of $\tilde \alpha$. 
It is now easily verified that the map  
$$ \pi_\gh \oplus \iota_\o: \;
(\gg, \omega) \ra (\gg_{\nabla,\alpha} , \o) \;  $$
defines an isomorphism of symplectic Lie algebras. 
As a consequence, $\alpha$ satisfies \eqref{aEqu},  by Proposition \ref{prop:ext_condition}.
\epf 

%whose cohomology class $[\alpha] \in H^2_{\rho}(\gh, \ga)$
%determines the Lie algebra extension $\gg \ra \gh$. 
We call the triple $(\gh, \nabla, \alpha)$ constructed in Theorem \ref{thm:ext_cocycle}
the extension triple associated to $(\gg,\omega, \ga , N)$. In general, triples 
$(\gh, \nabla, \alpha)$, 
where $\alpha \in  Z^2_{\nabla}(\gh, \gh^*)$ satisfies the \emph{symplectic extension condition} \eqref{aEqu}, will be called  \emph{a flat Lie algebra 
with symplectic extension cocycle}.  \\

\subsection{Functoriality of the correspondence}
We consider briefly the functorial properties of our constructions. The following
Lemma is an easy consequence of \eqref{nEqu}:
\bl  \label{lem:nabla_funct}
Let $(\gg, \o)$ be a symplectic Lie algebra with Lagrangian ideal $\ga$ and $(\gh =  \gg/\ga, \nabla)$ the associated quotient flat 
Lie algebra.  Let $\Phi: (\gg, \o) \ra (\gg',\omega')$ be an isomorphism of symplectic 
Lie algebras, and $\Phi_{\gh}: \gh = \gg/\ga \ra \gh' =  \gg'/\ga'$ the 
induced map on quotients, where $\ga' = \Phi(\ga)$. 
Then $\nabla' = (\Phi_{\gh})_{*} \nabla$ (push-forward of $\nabla$) is the 
associated quotient flat connection on $\gh'$.
\el 

Similarly, we can state:
\bl  \label{lem:alpha_funct}
Let $\Phi: (\gg, \o, \ga, N) \ra (\gg',\omega', \ga', N')$ be an isomorphism of strongly polarized symplectic Lie algebras. Then $\alpha_{(\gg',\omega', \ga', N')} = (\Phi_{\gh})_{*} \alpha_{(\gg, \o, \ga, N) }$. 
\el 
\pf Put $\alpha = \alpha_{(\gg, \o, \ga, N) }$ and $\alpha' =  \alpha_{(\gg',\omega', \ga', N')}$. By equations \eqref{eq:talpha} and 
\eqref{eq:alpha}, we have (using repeatedly that $\Phi$ is an isomorphism of strongly polarized symplectic Lie algebras) that \begin{eqnarray*}
\alpha' (\Phi_{\gh} u, \Phi_{\gh} v) ( \Phi_{\gh} w) & =& 
\omega'( \pi_{\ga'} ([\widetilde {\Phi_{\gh} u}, \widetilde{\Phi_{\gh} v]}'  ),  \Phi_{\gh} w) \\
 & = & \omega'( \pi_{\ga'} ([ {\Phi  \tilde u}, \Phi {\tilde v} ]' ),  \Phi_{\gh} w) \\
% & = & \omega'( \pi_{\ga'} ([ {\Phi \tilde u}, {\Phi \tilde v}]'  ),  \Phi_{\gh}  w) \\
& = & \omega'( \pi_{\ga'}( \Phi [ \tilde u, \tilde v]  ),  \Phi_{\gh}  w)  \\
& = &  \omega'( \Phi \,  \pi_{\ga} ([ \tilde u, \tilde v]  ),  \Phi_{\gh}  w) \\
& = & \omega( \pi_{\ga} ([ \tilde u, \tilde v]  ),  w) \\ & = & \alpha(u,v) (w)  \; 
\text{, for all $u,v, w \in \gh$.}
\end{eqnarray*} 
\vspace*{-1ex} \epf

Let  $(\gh, \nabla, \alpha)$ and $(\gh', \nabla,' \alpha')$ be flat Lie algebras with extension cocycles. An isomorphism of Lie algebras $\varphi: 
\gh \ra \gh«$ is an \emph{isomorphism of flat Lie algebras with extension cocycle}
if $\varphi^* \nabla' = \nabla$ and $\varphi^* \alpha ' = \alpha$. \\

In the view of Lemma \ref{lem:nabla_funct} and Lemma \ref{lem:alpha_funct},  isomorphic strongly polarized symplectic Lie algebras give rise to isomorphic flat Lie algebras with extension cocycle. 
Together with Theorem \ref{thm:framed_symplectic}, we therefore have:
 
\bc \label{cor:flangrangian_corresp}
The correspondence which associates to a strongly polarized symplectic Lie algebra $(\gg,\omega, \ga , N)$ its extension triple $(\gh,\nabla, \alpha)$ induces a bijection
between isomorphism classes of  strongly polarized symplectic Lie algebras
and isomorphism classes of flat Lie algebras with symplectic extension cocycle. 
\ec

\paragraph{Change of strong polarization}
Let $(\gg, \omega, \ga, N)$ and $(\gg, \omega, \ga, N')$ be two
strong polarizations belonging to the same Lagrangian extension. Then the
corresponding two-cocycles $\alpha =  \alpha_{(\gg, \omega, \ga, N)}$ and
$\alpha' =  \alpha_{(\gg, \omega, \ga, N')}$ differ by a coboundary: 

\bl \label{lem:changeofframe}
There exists $\sigma \in \Hom(\gh, \gh^*)$, satisfying
\begin{equation} \label{eq:sigmaL}
\sigma(u) (v) -  \sigma(v) (u) = 0 \; \text{, for all $u,v \in \gh$, } 
\end{equation}
such that $\alpha' = \alpha + \partial_{\rho} \sigma$.
\el 
\pf 
Let $\pi_{\ga}$, 
$\pi_{N}$, as well as $\pi'_{\ga}$, $\pi'_{N'}$ be the corresponding projection 
operators on the factors of $\gg$. Then there exists $\tau \in \Hom(\gg,\ga)$ with
$\ga \subseteq \ker \tau$ such that $\pi'_{N'} =  \pi_{N} + \tau$ and $\pi'_{\ga} =  \pi_{\ga} -  \tau$. Since, both $N$ and
$N'$, $\ga$ are Lagrangian, the homomorphism $\tau$ satisfies the condition  
\begin{equation} \label{eq:tauL}
\omega( \tau(n),m) + \omega( n, \tau(m)) = 0 \; 
\text{, for all $n,m \in N$.}
\end{equation}
Let $u,v \in \gh$ and $\tilde u , \tilde v \in N$ their respective lifts to $N$.
We compute $  [\pi'_{N'} \tilde u, \pi'_{N'} \tilde v] =[\tilde u +\tau \tilde u, \tilde v + \tau \tilde v] =  [\tilde u, \tilde v] + [\tilde u, \tau \tilde v] + [\tau \tilde u, \tilde v] $, since $\ga$ is abelian. 
By \eqref{eq:alpha}, we thus have 
\begin{eqnarray*} \alpha_{(\gg, \omega, \ga, N')}(u,v)  & = & \omega(\pi'_{\ga} ( [\pi'_{N'} \tilde u, \pi'_{N'} \tilde v]) , \, \cdot \, ) \\ 
& = & \omega(\pi_{\ga} ([\tilde u +\tau \tilde u, \tilde v + \tau \tilde v])
- \tau( [\tilde u +\tau \tilde u, \tilde v + \tau \tilde v]),  \, \cdot \, ) \\
& = &  \alpha_{(\gg, \omega, \ga, N)}(u,v) +  \omega([\tau \tilde u, \tilde v] + 
[ \tilde u, \tau \tilde v] - \tau( [ \tilde u,  \tilde v]),  \, \cdot \,) 
\end{eqnarray*}
Since $\ga$ is contained in $\ker \tau$, $\tau$ defines an element 
$\bar \tau \in \Hom(\gh, \ga)$ and therefore $\sigma = \iota_{\omega} \circ \bar \tau \in \Hom(\gh, \gh^*)$. Using \eqref{eq:rho_omeg}, we deduce from the above that $\alpha' = \alpha + \partial_{\rho} \sigma$. Moreover, \eqref{eq:sigmaL} is implied
by \eqref{eq:tauL}
\epf

\subsection{Equivalence classes of Lagrangian extensions} 
Let $\gh$ be a Lie algebra. %  and $\ga$ an abelian Lie algebra. 
A \emph{Lagrangian symplectic} extension $(\gg, \omega, \ga)$ 
over  $\gh$ % by $\ga$  
is a symplectic Lie algebra $(\gg, \omega)$ 
together with an extension of Lie algebras  
$$ 0 \ra \ga \ra \gg \ra \gh \ra 0 \; , $$ such that 
the image of $\ga$ in $\gg$ is a Lagrangian ideal of $(\gg, \omega)$.
 
\bd An \emph{isomorphism of Lagrangian extensions} over $\gh$ is an isomorphism 
of symplectic Lie algebras $\Phi: (\gg,\o) \ra (\gg', \o')$ such that the diagram 
$$
\begin{CD} \minCDarrowwidth45pt
 0 @>>>  \ga @>>>  \gg  @>>> \gh @>>> 0 \\
@.  @VV\Phi|_{\ga}V  @VV\Phi V @| \\
0 @>>>\ga' @>>>\gg'  @>>> \gh @>>> 0
\end{CD} 
$$ 
is commutative. 
\ed 
It is important to note that \emph{isomorphic Lagrangian extensions over $\gh$ give rise to the same associated quotient  flat Lie algebra $(\gh, \nabla)$}. This is an easy consequence of Lemma \ref{lem:nabla_funct}.\\
%So strictly speaking we are
%considering here isomorphisms of extensions over flat Lie algebras $(\gh,\nabla)$. 
%By slight abuse of notation we shall denote a Lagrangian extension with 
%associated flat Lie algebra $(\gh,\nabla)$ as an exact sequence 
% $$  0 \ra \ga \ra (\gg, \omega) \ra (\gh, \nabla) \ra 0 \; . $$

We now construct for any flat Lie algebra $(\gh,\nabla)$ 
a cohomology group, %  $H^2_{L,\nabla}(\gh, \gh^*)$, 
which describes all Lagrangian extensions of $\gh$ with
associated flat Lie algebra $(\gh,\nabla)$: \\

First, we  define Lagrangian one- and two-cochains on $\gh$ as 
\begin{eqnarray*}
\cC^1_{L}(\gh, \gh^*)  &= & \{ \varphi \in \cC^1 (\gh,\gh^*) \mid \varphi(u) (v) - \varphi(v) (u) = 0 \text{, for all $u,v \in \gh$} \} 
% \; \; \quad \quad 
\\
\cC^2_{L}(\gh, \gh^*)  &= & \{ \alpha \in \cC^2 (\gh,\gh^*) \mid \alpha \text{ satisfies \eqref{aEqu} } \}  
\end{eqnarray*}
Furthermore,  let $\rho = \rho^\nabla$ be the representation of $\gh$ on $\gh^*$
associated to $\nabla$, as defined in \eqref{rhoEqu}. 
Denote by $\partial_{\nabla} = \partial^{i}_{\rho}$
the corresponding coboundary operators for cohomology with 
$\rho$-coefficients. 
\bl  \label{lem:d1nabla}
The coboundary operator $\partial_{\nabla}: \cC^1(\gh, \gh^*) \ra  \cC^2 (\gh,\gh^*)$
maps the subspace $\cC^1_{L}(\gh, \gh^*)$ into $\cC^2_{L}(\gh, \gh^*) \cap Z^2_{\rho}(\gh, \gh^*)$. 
\el
\pf
Let $\f\in \cC^1(\gh, \gh^*)$, and $u,v,w\in \gh$. Then 
\begin{eqnarray*} (\partial_{\nabla}  \f ) (u,v) &=& \r (u)\f(v) -\r (v)\f (u) -\f ([u,v])\\
&=&-\f (v)\circ \n_u+\f (u)\circ \n_v-\f ([u,v])
\end{eqnarray*}
and, hence,
\begin{eqnarray} (\partial_{\nabla} \f ) (u,v)(w) &=& 
-\f (v)( \n_uw)+\f (u)( \n_vw)-\f ([u,v])(w). \label{eq:coboundrho}
\end{eqnarray}
Taking the sum over all cyclic permutations of $(u,v,w)$, we obtain:
 \begin{eqnarray*} \sum _{cycl}(\partial_{\nabla}  \f ) (u,v)(w)  &=&  \sum_{cycl}(
-\f (v)( \n_uw)+\f (u)( \n_vw)-\f ([u,v])(w)) \\ 
&=&  \sum _{cycl}(
-\f (w)( \n_v u)+\f (w)( \n_uv)-\f ([u,v])(w))\\
&=&  \sum _{cycl}(
\f (w)([u,v])-\f ([u,v])(w)).
\end{eqnarray*}
For  $\f\in \cC^1_{L}(\gh, \gh^*)$, all summands in the latter 
sum are zero. Hence, in this case $\partial_{\nabla}  \f  \in \cC^2_{L}(\gh, \gh^*)$.
\epf

Let $Z^2_{L,\nabla}(\gh,\gh^*) = \cC^2_{L}(\gh, \gh^*) \cap Z^2_{\rho}(\gh, \gh^*)$ denote the space of Lagrangian cocycles. 
We now define 
the \emph{Lagrangian extension cohomology group} for the flat Lie algebra $(\gh,\nabla)$
as $$ H^2_{L,\nabla}(\gh, \gh^*) \, = \; {{Z^2_{L,\nabla}(\gh,\gh^*)} \over {\partial_{\nabla} \, \cC^1_{L}(\gh, \gh^*)}} \; . $$

\begin{remark} By construction there is a natural map from $H^2_{L,\nabla}(\gh, \gh^*)$
to the ordinary Lie algebra cohomology group $H^2_{\rho}(\gh, \gh^*)$. Note that this map need not be injective, in general, see Example \ref{ex:Lag_cohom} below. 
\end{remark}

Together with Corollary \ref{cor:flangrangian_corresp}, the following shows that the isomorphism classes of Lagrangian extensions over 
a flat Lie algebra are in one-to-one correspondence with
the  group $H^2_{L,\nabla}(\gh, \gh^*)$: 
\bt  \label{thm:Lagrangian_corresp} Every symplectic Lagrangian extension $(\gg, \o, \ga)$ over the flat Lie algebra $(\gh, \nabla)$ gives rise to a \emph{characteristic extension class}
$$ [\alpha_{\gg,\omega, \ga}] \in \, H^2_{L,\nabla}(\gh, \gh^*) \; .$$
Two extensions  $(\gg, \o, \ga)$ and $(\gg', \o', \ga')$ over $(\gh, \nabla)$ are isomorphic 
if and only if they have the same extension class in $H^2_{L,\nabla}(\gh, \gh^*)$. 
\et
\pf Let $(\gg, \omega, \ga)$ be a Lagrangian extension over $(\gh, \nabla)$. 
Choose a strong polarization $(\gg, \omega, \ga, N)$, and put  $[\alpha_{\gg,\omega, \ga}] 
= [\alpha_{\gg,\omega, \ga,N}]$, where $\alpha_{\gg,\omega, \ga,N} \in Z^2_{L,\nabla}(\gh,\gh^*)$ is defined as in Theorem \ref{thm:ext_cocycle}. By Lemma \ref{lem:changeofframe},  this cohomology class is independent of the choice of strong polarization 
$N$.

Now suppose $\Phi: (\gg, \o, \ga) \ra (\gg', \o', \ga')$ is an isomorphism 
over $\gh$. Choose a complementary Lagrangian subspace 
 $N$ for $(\gg, \o, \ga)$. Then 
$\Phi: (\gg, \o, \ga, N) \ra (\gg', \o', \ga', N' = \Phi(N))$ is an isomorphism of
strongly polarized symplectic Lie algebras. 
By Lemma  \ref{lem:alpha_funct},  $\alpha_{(\gg',\omega', \ga', N')} = 
(\Phi_{\gh})_{*} \alpha_{(\gg, \o, \ga, N) } = \alpha_{(\gg, \o, \ga, N) }$, since,
by assumption, $\Phi_{\gh} = \mathrm{id}_{\gh}$. This shows that isomorphic
extensions over $\gh$ have the same cohomology class.

It remains to show that two extensions over $\gh$ with the same class are isomorphic. By Theorem \ref{thm:ext_cocycle}, it is enough to show that 
any two strongly polarized symplectic Lie algebras $F(\gh, \nabla, \alpha)$ and
$F(\gh,\nabla, \alpha')$ give rise to isomorphic extensions over $\gh$
if $ [\alpha] = [\alpha'] \in H^2_{L,\nabla}(\gh, \gh^*)$; that is, if $\alpha' 
= \alpha - \partial_{\nabla} \sigma$, for some $\sigma \in \cC^1_{L}(\gh, \gh^*)$.
Using \eqref{eq:f1} and  \eqref{eq:f2} it is easily verified that then the  map
$$ (\gg_{\nabla,\alpha}, \omega) \ra (\gg_{\nabla,\alpha'}, \omega) \; , \; (u, \xi ) \mapsto (u, \xi + \sigma(u) )$$  is the required
isomorphism of Lagrangian extensions over $\gh$.
\epf 

Similarly, we obtain the following refinement of Corollary \ref{cor:flangrangian_corresp}:

\bc \label{cor:Lagrangian_corresp}
The correspondence which associates to a  symplectic Lie algebra  with Lagrangian 
ideal $(\gg,\omega, \ga)$ the  extension triple $(\gh,\nabla, [\alpha])$ induces a bijection
between isomorphism classes of symplectic Lie algebras with Lagrangian 
ideal and isomorphism classes of flat Lie algebras with symplectic extension cohomology class. 
\ec

In certain situations the preceding corollary can be used  to classify also 
isomorphism classes of symplectic Lie algebras. See, for instance, Corollary
 \ref{cor:filiform_corresp} on filiform symplectic Lie algebras. 
 
\subsection{Comparison of Lagrangian extension cohomology and ordinary cohomology} \label{sect:Lcohomo_comp}
In this subsection, we shall briefly describe the kernel $\kappa_{L}$  of the natural map  
\begin{equation} \label{eq:Lextmap}
H^2_{L,\nabla}(\gg, \gg^*) \, \longrightarrow \, H^2_{\rho}(\gg, \gg^*) \; . 
\end{equation}
It is clear that the kernel of this map % \eqref{eq:Lextmap} 
is 
$$  \kappa_{L} =  \; { B^{2}_{\rho}(\gg,\gg^*) \cap Z^{2}_{L,\nabla}(\gg,\gg^*) \over  B^{2}_{L,\nabla}(\gg,\gg^*) }  \; ,  $$
where  $B^{2}_{\rho}(\gg,\gg^*) = \{ \partial^{1}_{\rho} \, \lambda  \mid  \lambda \in \Hom(\gg, \gg^*) \}$ is the set of ordinary two-coboun\-da\-ries with $\rho$-coefficients and
$B^{2}_{L,\nabla}(\gg,\gg^*) = \{ \partial^{1}_{\rho} \, \lambda  \mid  \lambda \in \mathcal{C}^1_{L}(\gg, \gg^*) \}$  is the set of two-coboundaries for Lagrangian extension cohomology. 

\begin{remark}
In the following computations we will use the natural identification of 
the module $\mathcal{C}^1(\gg, \gg^*)= \Hom(\gg, \gg^*) $  with bilinear forms, as well as the inclusion of $ \mathcal{C}^2(\gg, \gg^*) = \Hom( \bigwedge^2 \gg, \gg^*)$ 
into the vector space of trilinear forms. We thus have a  decomposition 
$$ \mathcal{C}^1(\gg, \gg^*) = S^2 \gg^* \oplus  \bigwedge^2 \gg^* $$ of  
$\gg^*$-valued one-cochains into symmetric and alternating forms, where  $$ \mathcal{C}^1_{L}(\gg, \gg^*)  =  S^2 \gg^* \; . $$ In particular, we may consider the module of two-cocycles  $Z^2(\gg) \subset  \bigwedge^2 \gg^*$ %$$ Z^2(\gg) \subset  \bigwedge^2 \gg^* 
%$$ 
as a subspace of  $ \mathcal{C}^1(\gg, \gg^*)$, and the coboundary map (for Lie algebra cohomology with trivial coefficients) 
$ \partial^2:   \bigwedge^2 \gg^*  \;   \ra  \;  \bigwedge^3 \gg^*$ % \subset \mathcal{C}^2_{\rho}(\gg, \gg^*)$ 
maps to a subspace of  $\mathcal{C}^2_{\rho}(\gg, \gg^*)$. 
\end{remark}
\par

\bp \label{prop:Lkernel} 
We have \begin{enumerate}
\item  For all $\lambda \in \bigwedge^2 \gg^* $,  $\; \sum _{cycl}(\partial^1_{\rho} \lambda)  =\,  2 \, \partial^2 \lambda \; . $
\item 
$   B^{2}_{\rho}(\gg,\gg^*) \cap Z^{2}_{L,\nabla}(\gg,\gg^*) =  \, \partial^1_{\rho} \left( S^2 \gg^* \oplus  Z^2(\gg) \right) = \, 
B^{2}_{L,\nabla}(\gg,\gg^*) + \partial^1_{\rho} \left( Z^2(\gg) \right) \; . 
$
\end{enumerate}
\ep 
\pf Recall from the proof of Lemma \ref{lem:d1nabla} that 
$$ \sum _{cycl}(\partial^1_{\rho}  \lambda) (u,v,w) = 
 \sum _{cycl} (
\lambda (w, [u,v])- \lambda ([u,v], w)) \; . $$
 For alternating $\lambda$,
this implies $\sum _{cycl} (\partial^1_{\rho}  \lambda) (u,v,w) = 
2  \sum _{cycl} \lambda (w, [u,v])$. Therefore, $\sum _{cycl} (\partial^1_{\rho}  \lambda)  = 2 \, \partial^2 \lambda$, 
thus proving the first formula.

By definition of $Z^{2}_{L,\nabla}(\gg,\gg^*)$, we have 
$$ B^{2}_{\rho}(\gg,\gg^*) \cap Z^{2}_{L,\nabla}(\gg,\gg^*) \, = \,
 \{ \partial^1_{\rho}  \lambda \mid  \sum _{cycl} (\partial^1_{\rho}  \lambda) =0 \; , \;  \lambda \in \mathcal{C}^1_{\rho}(\gg, \gg^*) \} \; . 
$$ 
Decomposing, $\mu \in  \mathcal{C}^1_{\rho}(\gg, \gg^*)$ as $\mu = \mu_{L} + \lambda$, where $\mu_{L} \in S^2 \gg^*$ and $\lambda \in \bigwedge^2 \gg^*$,  we deduce (using Lemma 
\ref{lem:d1nabla}) 
that $\sum _{cycl} (\partial^1_{\rho} \mu) =  \sum _{cycl} (\partial^1_{\rho} \lambda)$. By 1., this implies  $\sum _{cycl} (\partial^1_{\rho} \mu)  = 0$ if and only if $\lambda \in Z^2(\gg)$. 
Hence, the second part of the proposition holds. 
\epf

For an illustration, we compute two examples. 
\bp Let $(\gh,\nabla)$ be a two-step nilpotent flat Lie algebra,  where $\nabla$ is the canonical flat connection on $\gh$, satisfying  $\nabla_{u} v = {1 \over 2} [u, v]$, for all $u,v \in \gh$. Then 
the natural map $H^2_{L,\nabla}(\gh, \gh^*) \, \longrightarrow \, H^2_{\rho}(\gh, \gh^*)$ is injective. 
\ep 
\pf By Proposition \ref{prop:Lkernel},  it is enough to show that  $\partial^1_{\rho} \left( Z^2(\gh) \right)$ is contained in
$B^{2}_{L,\nabla}(\gh,\gh^*)$. Let $\varphi \in \cC^1(\gh, \gh^*)$ then by 
\eqref{eq:coboundrho} we have 
$$ (\partial_{\nabla} \varphi) \, (u,v,w) =  -  {1 \over 2} \f (v,  [u, w] )+ {1 \over 2} \f (u, [v,w]) -\f ([u,v], w) \; .
$$
If $\lambda \in Z^2(\gh)$ is a two-cocycle for $\gh$ we infer that 
\begin{equation} \label{eq:plambda}
(\partial_{\nabla} \lambda) \, (u,v,w) =    {1 \over 2} \lambda (w, [u,v]) \; . 
\end{equation}   
Note that $\lambda$ defines a map $\mathcal{E}(\lambda): \gh \ra [\gh, \gh]^*$, 
$w \mapsto \lambda(w, \cdot)$, such that 
$$  (\partial_{\nabla} \lambda) \, (u,v,w) =    {1 \over 2} \mathcal{E}(\lambda)(w) ([u,v])   \; . $$
Note further that $\mathcal{E}(\lambda)$ vanishes on the center $Z(\gh)$ of $\gh$,
which contains the commutator $[\gh, \gh]$. Therefore, there exists a symmetric form $\mu \in S^2(\gh^*)$  
which satisfies, for all $u,v,w \in \gh$,   
$$   \mu(w, [u,v ] ) =  \mathcal{E}(\lambda)(w) ([u,v]) =  \lambda (w, [u,v]) \; . $$
Then we compute 
\begin{eqnarray*}
 (\partial_{\nabla}  \mu) \, (u,v,w)  & = &   -  {1 \over 2} \mu (v,  [u, w] )+ {1 \over 2} \mu (u, [v,w]) -\mu  ([u,v], w) \\
& = &  -  {1 \over 2} \mu (v,  [u, w] )+ {1 \over 2} \mu (u, [v,w]) -\mu (w, [u,v])  \\
& = &  -  {1 \over 2} \lambda (v,  [u, w] )+ {1 \over 2} \lambda (u, [v,w]) -\lambda (w, [u,v]) \\
& = &  -  {1 \over 2}   \lambda (w, [u,v]) -  \lambda (w, [u,v]) \\
& = &  - { 3 \over 2}    \lambda (w, [u,v])
\end{eqnarray*}
Comparing with \eqref{eq:plambda}, we deduce that $\partial_{\nabla} ( -  {1 \over 3}   \mu) = \partial_{\nabla} \lambda$. 
This shows that $\partial^1_{\rho} \left( Z^2(\gh) \right)$ is contained in
$B^{2}_{L,\nabla}(\gh,\gh^*)$. 
\epf 

In view of the previous proposition it seems important to remark
that there do exist simple examples of flat Lie algebras where the
homomorphism  \eqref{eq:Lextmap} is not injective: 

\begin{Ex}[Natural cohomology map is not injective]   
\label{ex:Lag_cohom} 
Let $V = \spanof{e_{1}, e_{2}}$ be a two-dimensional vector space
viewed as an abelian Lie algebra $\gg$. Let $\nabla$ be the
% torsion-free flat 
connection on $\gg$ whose non-zero products in the basis are 
$$\nabla_{e_{1}} e_{1} = e_{1} \,  \text{ and } \,  
\nabla_{e_{2}} e_{1} = \nabla_{e_{1}} e_{2} = e_{2}\; .$$  It is straightforward to verify that $\nabla$ is a torsion-free flat connection on the abelian Lie algebra $\gg$. Let $\lambda \in \cC^1_{\rho}(\gg,\gg^*)$. Using \eqref{eq:coboundrho}  we compute 
\begin{eqnarray*} \partial^1_{\rho} \lambda (e_{1}, e_{2}, e_{1}) & = & 
- \lambda( e_{2}, e_{1} ) + \lambda(e_{1}, e_{2}) \\
 \partial^1_{\rho} \lambda (e_{1}, e_{2}, e_{2}) & = & 
- \lambda( e_{2}, e_{2} )  
\end{eqnarray*}
This shows that $B^{2}_{L,\nabla}(\gg,\gg^*) = \{ \mu \in \Hom(\bigwedge^2 \gg, \gg^*) \mid \mu( e_{1}, e_{2}, e_{1}) = 0 \}$ and 
\begin{equation*} \begin{split}
B^{2}_{\rho}(\gg,\gg^*) \cap Z^{2}_{L,\nabla}(\gg,\gg^*) & = B^{2}_{L,\nabla}(\gg,\gg^*) + 
\partial^1_{\rho} \left( Z^2(\gg) \right) \\ & =  B^{2}_{L,\nabla}(\gg,\gg^*)
\oplus \{ \mu \in \Hom(\bigwedge^2 \gg, \gg^*) \mid \mu( e_{1}, e_{2}, e_{2}) = 0 \}. 
\end{split}
\end{equation*}
In particular, $\kappa_{L} \cong   \{ \mu \in \Hom(\bigwedge^2 \gg, \gg^*) \mid \mu( e_{1}, e_{2}, e_{2}) = 0 \}$ is non-zero.  
\end{Ex}

%%%%%%%%%%%%%%%%%%%%%%%%%%%%%%%%

\part{Existence of Lagrangian normal subgroups}

%%%%%%%%%%%%%%%%%%%%%%%%%%%%%%%%

\section{Existence of Lagrangian ideals: basic counterexamples}  
\label{sect:c_examples}

In this section we present several fundamental examples of real symplectic 
Lie algebras which show that, even for nilpotent symplectic Lie algebras, Lagrangian ideals do not necessarily exist. It seems worthwhile to mention that all Lie algebras occurring in these examples do have a \emph{unique} maximal abelian ideal,  which is non-degenerate for the symplectic structure, and, in particular, the examples decompose as proper symplectic  semi-direct products. 
%  in a symplectic Lie algebra. 

\subsection{Solvable symplectic Lie algebras without Lag\-ran\-gian ideal}
Every two-dimensional symplectic Lie algebra $(\gg, \o)$ has a one-dimensional ideal, which is Lagrangian for dimension reasons. The following four-dimen\-sio\-nal solvable symplectic Lie algebra
$(\gg ,\o )$ does not admit a  Lagrangian ideal.

\begin{Ex}[four-dimensional, metabelian, symplectic rank one] 
\label{ex:fdim_metab}

\hspace{1ex} \\
\noindent The Lie algebra 
$\gg$ is defined with respect to the basis $\{ X,Y,Z,H \}$ by the following
non-trivial Lie brackets
\[ [H,X]=-Y,\; [H,Y]=X,\; [H,Z] = -Z.\] 
Therefore $\gg$ is metabelian with three-dimensional 
commutator ideal $[ \gg, \gg ] = \spanof{X,Y,Z}$.  
The only two-dimensional 
subalgebras of $\gg$ which are invariant
under the derivation $\ad(H)$ are
$\spanof{ X,Y}$ and $\spanof{ H,Z}$.
Since every ideal is invariant by $\ad(H)$, 
$\spanof{ X,Y}$ is the only two-dimensional
ideal of $\gg$.

Let $\{ X^*,Y^*,Z^*,H^* \}$ be the dual basis. Using the differentials
\[ \partial X^* = -H^*\wedge Y^*,\; \partial Y^* = H^*\wedge X^*,\; 
\partial H^* = 0,\; \partial Z^* = H^*\wedge Z^* \] 
it is easily checked that the two-form \[ \o = X^*\wedge Y^* + H^*\wedge Z^* \] 
is closed. 
Neither of the above two subalgebras is isotropic for 
$\omega$. 
Indeed, $\spanof{ X,Y}$ is an abelian \emph{non-degenerate} ideal of $(\gg, \o)$ and there is a semi-direct 
orthogonal decomposition 
 $$   (\gg, \o) =  (\spanof{ H,Z} \, , \,  H^*\wedge Z^*) \oplus ( \spanof{ X,Y} \, , \,  X^*\wedge Y^*) \; . $$  
 In particular,  \emph{the symplectic Lie algebra $(\gg,\omega)$ has no Lagrangian ideal}.
Note, however, that $\gj = \spanof{Z}$ is a one-dimensional
normal isotropic ideal, with $\gj^{\perp_{\omega}} = [\gg, \gg ]$.
In particular, \emph{$(\gg,\omega)$ is symplectically reducible and has a two-dimensional abelian reduction}. 
\end{Ex} 
The above example is neither completely solvable nor unimodular. 
The next example in dimension six 
shows that complete solvability and unimodularity
are not sufficient to ensure the existence of a Lagrangian ideal.

\begin{Ex}[six-dimensional, completely solvable, no 
La\-gran\-gian ideal] \label{ex:noLag_cs6}

% \hspace{1ex} \\  
Let $\gg = \spanof{ d_1, d_2} \oplus_{D_{1}, D_{2}} V_{4}$
be the semi-direct sum of the abelian ideal $V_{4}$  and the plane spanned by $d_{1} , d_{2}$.  With respect to 
the basis $\{ e_1,\ldots ,e_4 \}$ of $V_{4}$, we let $d_{1}, d_{2}$ 
act on $V_{4}$ via the
two derivations $D_1=\mathrm{diag}(\mu_1,-\mu_1,0,0)$ and $D_2
=\mathrm{diag}(0,0,\mu_2,-\mu_2)$, where $\mu_1, \mu_2\in \bR$.  
Putting $\{d^1, d^2, e^1,\ldots ,e^4 \}$ for the dual basis, one can check that
\[ \o = e^1\wedge e^2 + e^3\wedge e^4 + d^1\wedge d^2\]
is a sum of closed forms. Moreover, the two-form $\o$ is
non-degenerate. This shows that $\o$ is a symplectic form. 

{}From now on we assume
that $\mu_1\mu_2\neq 0$. We claim that, in this case,  
$(\gg ,\o )$ does not admit any Lagrangian ideal. To see this
we consider the root decomposition of $\gg$ with respect to the 
Cartan subalgebra $\gh= \spanof{ d_1, d_2}\subset \gg$, which 
is of the form 
\[ \gg = \gh \oplus \bigoplus_{i=1}^{4} \mathbb{R} e_{i} \; .\]
Using this decomposition, 
it follows that every abelian ideal $\ga \subset \gg$ is 
contained in the \emph{maximal abelian ideal} 
$V_{4}= [\gg, \gg] \subset \gg$. 
Since the subspace $V_{4}$ is non-degenerate with respect to 
$\o$, $V_{4}$ is also a \emph{non-degenerate ideal}  and 
$$ (\gg, \o) =   (\spanof{ d_1, d_2} \, , \, d^{1} \wedge d^{2})  \oplus_{D_{1}, D_{2}} (V_{4}, \o)$$  is a symplectic semi-direct sum.
Since, by 1.\ of Lemma \ref{lem:normal_red}, every isotropic ideal is abelian, it is contained in $V_{4}$.
This implies that every isotropic abelian ideal 
has dimension at most two. In particular, 
this proves that there exists no Lagrangian ideal 
$\ga \subset \gg$. 
\end{Ex}

\subsection{An eight-dimensional nilpotent symplectic Lie algebra of symplectic rank three}
We shall establish in Theorem \ref{thm:dimlt8} that 
every nilpotent symplectic Lie algebra 
of dimension less than or equal to six admits a Lagrangian
ideal. The next example shows that this result can not be extended to higher dimensions. 
\begin{Ex}[eight-dimensional,  nilpotent, no  Lagrangian ideal]
\label{ex:noLag_n8}

\hspace{1ex} \\  
Let $\bar \gg = \gh_3 \oplus \gh_3$ be the direct sum of two
Heisenberg Lie algebras. Choose a basis $ \{ X,Y,Z,X',Y',Z' \}$, such that non-zero 
commutators are $[ X, Y] = Z$ and  $[X', Y'] = Z'$. 
Let  $\{ X^*,Y^*,Z^*,X'^*,Y'^*,Z'^* \}$ be the corresponding dual basis.
We endow the nilpotent Lie algebra $\bar \gg$ with the symplectic form
\[ \bar \o = X^*\wedge Z^* + X'^*\wedge Z'^* + Y^*\wedge Y'^*.\]
Let $\varphi$ be the derivation of $\bar \gg$ which maps $Y$ to
$X$, $Y'$ to $X'$ and the other basis vectors to zero. 
The derived  two-cocycle 
$\a  = \bar \o (\varphi \, \cdot \,  , \cdot \, ) +
\bar \o(\cdot \,  ,\, \varphi \, \cdot  )$ % on $\bar \gg$ 
then is 
\be \label{a1Equ} \a = Y^*\wedge Z^* + Y'^*\wedge Z'^*.\ee
This is easily computed using the fact that the dual map 
$\varphi^*: \bar \gg^*\ra \bar \gg^*$ maps 
$X^*$ to $Y^*$, $X'^*$ to $Y'^*$
and the remaining vectors of the dual basis to zero. The same fact
immediately implies that $\a_{\varphi} = \a (\varphi \,\cdot \,  , \, \cdot \,  ) + \a (\cdot \,  , \, \varphi  \,  \cdot \, )=0$. 
Let 
\be \label{gg8Equ} ( \, \gg = \spanof{\xi} + \bar \gg + \spanof{ H} \, , \, \o = \xi^*\wedge H^* + \bar \o \, )
\ee
be the symplectic oxidation of $(\bar \gg, \bar \o)$ associated with 
the derivation $\varphi$ and the one form $\l=0$, as constructed in  Proposition \ref{prop:symplecticox}.
\end{Ex}

\vspace{1ex}
\bp \label{Ex3Prop} The eight-dimensional symplectic Lie algebra $(\gg , \o )$ defined in \eqref{gg8Equ} of
Example \ref{ex:noLag_n8} is four-step nilpotent and is of symplectic rank three. In particular, $(\gg , \o )$ has no Lagrangian ideal.
\ep 
\pf As is easily deduced from the above definitions and the defining equations \eqref{eq:lie1}, \eqref{eq:lie2},  
the descending central series of $\gg$ is given by 
\begin{eqnarray*} 
C^1(\gg ) &=& [\gg ,\gg ] =  \langle X,Z,X',Z', H\rangle \\
C^2(\gg)  &=& \langle Z,Z',H\rangle\\
C^3(\gg)  &=& \langle H\rangle \\
C^4(\gg)  &=& 0 .
\end{eqnarray*}
Therefore, $\gg$ is of nilpotency class four. 
Observe that \be  W:=  \spanof{ \xi, X,Z,X',Z', H } =  \spanof{ \xi, 
 [\gg ,\gg ] } 
= \spanof{ Y, Y' }^{\perp_\o} \ee
is a \emph{non-degenerate abelian ideal}  in $(\gg, \o)$ and 
$$  (\gg, \o) = (  \spanof{ Y, Y' } \, , \, Y^{*} \wedge Y^{'*}) \oplus (W, \omega) \; .  $$ 
Suppose that $\ga \subset \gg$ is an abelian ideal. 
We claim that 
\be \label{claim1Equ} \ga \subset  W.
\ee
Suppose otherwise. Then there exists $U=aY+a'Y'+ \bar{U}\in \ga$, $\bar{U}\in  W$, 
such that $a\neq 0$ or $a'\neq 0$. Without 
restriction we can then assume that $a=1$.   
Using \re{a1Equ}, as well as \eqref{eq:lie1}, \eqref{eq:lie2}, we derive that 
\[  [\xi , U]= X + a' X' \, \in \ga\]
and 
\[ [[\xi , U],U] = Z+(a')^2Z'\neq 0 \, ,\]
which contradicts the fact that $\ga$ is abelian.  
This proves \re{claim1Equ}. 
Now let $\ga\subset \gg$ be an isotropic ideal. It is abelian by 
Lemma \ref{lem:normal_red} and, hence, contained in $W$.  
Since  $\o$ is non-degenerate on $W$,  
the dimension of the isotropic subspace $\ga\subset W$ is at most three. Therefore, $\ga$ is not Lagrangian in $(\gg, \omega)$.

On the other hand $\gj = \spanof{H,Z,Z'}$ is clearly an isotropic ideal, showing that the symplectic rank of $(\gg, \omega)$ is three. 
\epf

As a side-remark we state the following corollary which shows that Theorem \ref{thm:Lagrange1} in Section \ref{InvsubspacesSection} cannot be generalized from endomorphisms of symplectic vector spaces to derivations of 
symplectic Lie algebras: 

\bc The six-dimensional symplectic Lie algebra $(\bar 
\gg =\gh_3\oplus \gh_3, 
\bar \o )$ occuring in Example \ref{ex:noLag_n8}  has no $\varphi$-invariant Lagrangian ideal. 
\ec 
One can ask whether the non-existence of Lagrangian ideals
in Example \ref{ex:noLag_n8} is due to the particular choice of
symplectic form. The following result answers this
question. 
\bt  \label{thm:noLg84}
For every choice of symplectic form $\o$ on the  
eight-dimen\-sio\-nal Lie algebra $\gg$ of 
Example \ref{ex:noLag_n8},  the symplectic Lie algebra 
$(\gg ,\o )$ has no Lagrangian ideal.
\et 

\pf Let us first determine the space $Z^2(\gg )$ of closed
$2$-forms. Its dimension can be read off from \[ \dim  Z^2(\gg ) = \dim \bigwedge^2\gg^* - \dim \partial  \left( \bigwedge^2\gg^*  \right) = 28-17=11,\]
where 
$$ \partial = \partial^2: \; \bigwedge^2\gg^* \ra \bigwedge^3\gg^*$$ is the boundary operator on forms. 
In fact, 
\begin{eqnarray*}   \partial \left( \bigwedge^2\gg^* \right)  &=& 
\langle \,  e^{123}, e^{134}, e^{136}, e^{156}, e^{167},   e^{236}, e^{246}, 
e^{356} ,e^{367},   -e^{137}+e^{256},
\\
&& -e^{237}+e^{456}, e^{235}+e^{146}, 
-e^{238} + e^{467}, -e^{568}+e^{347}, e^1\wedge ( e^{35}+e^{26}),\\ 
&&
 -e^{138} + e^2\wedge (e^{34}+ e^{67}),
-e^{168}+e^5\wedge (e^{34} + e^{67}) \, \rangle,
\end{eqnarray*}
where we are using  the basis $\{ e^1,\ldots ,e^8 \} $ of $\gg^*$, which is dual
to 
\[ \{ e_1,\ldots ,e_8 \} = \{ \xi , X, Y, Z, X' , Y', Z', H \} \] 
and the notation 
$e^{ij} = e^i\wedge e^j$, $e^{ijk}= e^i\wedge e^j\wedge e^k$.  
One can easily check that the following eleven $2$-forms are 
closed and linearly independent:
\be \label{basisEqu}  \left\{ \,  e^{12}, e^{13},  e^{15}, e^{16}, e^{23},  e^{34}, e^{36},  e^{56}, e^{67}, 
   e^{26}-e^{35},e^{24}+e^{57}+e^{18} \, \right\} .\ee
Therefore, they form a basis of $Z^2(\gg )$.\\

\begin{claim} \label{claim1}
The subspace  
\[ W : = \spanof{ e_1, e_2, e_4, e_5, e_7 , e_8 } = 
\spanof{ \xi, X,Z,X',Z', H}\subset \gg \, ,\]
cf.\  \re{claim1Equ},  
is an abelian ideal,  which is  \emph{non-degenerate} for every symplectic form $\o$ on $\gg$. 
\end{claim}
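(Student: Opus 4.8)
The plan is to separate the two assertions, since being an abelian ideal is a purely Lie-theoretic fact that does not involve $\o$ at all, whereas non-degeneracy will be extracted from the explicit basis \eqref{basisEqu} of $Z^2(\gg)$. First I would record that $W = \spanof{\xi} + [\gg,\gg]$ and that every non-zero bracket of $\gg$ — namely $[X,Y]=Z$, $[X',Y']=Z'$, $[Y,Z]=H$, $[Y',Z']=H$, $[\xi,Y]=X$, $[\xi,Y']=X'$ — involves one of the two vectors $Y=e_3$ or $Y'=e_6$, neither of which lies in $W$. Hence all brackets of elements of $W$ vanish and $W$ is abelian. Moreover $[\gg,\xi]\subseteq \spanof{X,X'}\subseteq W$ and $[\gg,[\gg,\gg]]\subseteq [\gg,\gg]\subseteq W$, so $W$ is an ideal. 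This already settles the Lie-theoretic part for \emph{every} $\o$.

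For non-degeneracy the key observation is that, of the eleven basis cocycles listed in \eqref{basisEqu}, only $e^{12}$, $e^{15}$ and $\eta := e^{24}+e^{57}+e^{18}$ have all of their indices inside the index set $\{1,2,4,5,7,8\}$ of $W$; each of the remaining eight carries a factor $e^3$ or $e^6$ and therefore restricts to zero on $\bigwedge^2 W$. Writing a general closed two-form in this basis as $\o=\sum_i a_i\,\o_i$, I would thus obtain the restriction $\o|_W = a_1\,e^{12} + a_3\,e^{15} + a_{11}\,\eta$, a two-form living entirely on $W$.

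Then I would compute the Pfaffian of $\o|_W$ in the ordered basis $(\xi,X,Z,X',Z',H)$ of $W$. Its only non-zero entries are $\o(\xi,X)=a_1$, $\o(\xi,X')=a_3$, $\o(\xi,H)=a_{11}$, $\o(X,Z)=a_{11}$, and $\o(X',Z')=a_{11}$; since $Z$ can only be matched with $X$ and $Z'$ only with $X'$, the unique perfect matching of the support is $\{X,Z\},\{X',Z'\},\{\xi,H\}$, giving $\mathrm{Pf}(\o|_W)=\pm a_{11}^{3}$. Hence $W$ is non-degenerate if and only if $a_{11}\neq 0$. Finally, I would note that $e^8=H^*$ appears in \eqref{basisEqu} only inside $\eta$ (through the summand $e^{18}$), so any closed form with $a_{11}=0$ annihilates $H$, i.e. $H$ lies in its radical; a symplectic form on $\gg$ must therefore satisfy $a_{11}\neq 0$, and consequently $\o|_W$ is non-degenerate.

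The argument is essentially a bookkeeping exercise once the basis of $Z^2(\gg)$ is in hand, and I do not expect any serious obstacle in the linear algebra. The one conceptual point worth isolating — and the crux of the proof — is that the \emph{single} coefficient $a_{11}$ governs both the non-degeneracy of $\o$ on all of $\gg$ (through the vector $H$) and the non-degeneracy of its restriction to $W$; once this coincidence is identified, the conclusion that $W$ is non-degenerate for every symplectic form is immediate.
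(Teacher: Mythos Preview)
Your proposal is correct and follows essentially the same approach as the paper. Both arguments identify that only $e^{12}$, $e^{15}$, and $\eta=e^{24}+e^{57}+e^{18}$ from the basis \eqref{basisEqu} contribute to $\o|_W$, and that the coefficient of $\eta$ must be non-zero since it is the only basis cocycle involving $e^8$; the paper then says ``one can easily check'' non-degeneracy on $W$, whereas you make this explicit via the Pfaffian computation, which is a welcome sharpening.
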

\noindent 
\begin{proof}[Proof of Claim \ref{claim1}]   Since the last element  
of the basis \re{basisEqu} is the only which does not vanish
on $e_8=H$, we see that every symplectic form $\o$ on $\gg$ has 
non-zero coefficient over $e^{24}+e^{57}+e^{18}$.
Up to scaling, we can assume that the coefficient is $1$, so that 
\be \label{oEqu} \o = e^{24}+e^{57}+e^{18} + ae^{12} +  be^{15} + \o_{1},\ee 
where $a,b\in \bR$,  and $\o_{1}$ is a linear combination of the remaining eight 
basis vectors of $Z^2(\gg)$. Note  that the restriction 
of $\o_{1}$ to $\bigwedge^2 W$ vanishes. Now one can easily check that $\o$
is non-degenerate on $W$, for any choice of $a,b\in \bR$.  
\end{proof} 

\noindent 
Claim \ref{claim1}  above implies that $W$ does not contain a Lagrangian
ideal of $\gg$. On the other hand, we saw in the proof of
Proposition \ref{Ex3Prop} that every abelian ideal
of $\gg$ is contained in $W$. Since a Lagrangian ideal
is necessarily abelian, this proves the theorem. 
\epf

\begin{remark} \label{RemarkCP} The symplectic form $\o$ considered in Example \ref{ex:noLag_n8}
is obtained by putting $a=b=0$ and $\bar \o = e^{36}$ in \re{oEqu}. As in Example \ref{ex:noLag_n8},
the above proof shows that 
the ideal $\gj = \spanof{H,Z,Z'}$ is isotropic for all 
symplectic forms $\omega$ on the Lie algebra $\gg$. 

Since  $\gj^{\perp_\o} = \spanof{H,Z,Z',Y,Y'}$, $\ga = \spanof{H,Z,Z',Y}$ is an abelian
Lagrangian subalgebra. This shows that the existence of Lagrangian abelian
subalgebra of a symplectic Lie algebra $(\gg ,\o )$ does not imply the existence of an abelian ideal 
of $(\gg ,\o )$ 
and not even the 
existence of a symplectic form 
$\o'$ on $\gg$ such that $(\gg, \o')$ admits an abelian ideal, contrary to the case of commutative
polarizations considered in \cite[Thm.\ 4.1]{EO}. 
\end{remark}

%%%%%%%%%%%%%%%%%%%%%%%%%%%%%%%%

\section{Endomorphism algebras on symplectic vector spaces}
\label{sect:qalgebras}

Let $\o$ be an alternating form on a vector space $V$. (At the moment 
we do not assume that $\o$ is non-degenerate.)
For any endomorphism $\varphi$ of $V$ we define alternating forms $\a = \omega_\varphi$ and $\b  =
\o_{\varphi,\varphi}$ on $V$ by
\be \a (u,v) = \omega_\varphi(u,v)\,  := \; \, \o (\varphi u, v) + \o(u, \varphi v) \ee
and 
\be \b (u,v) := \, \o (\varphi^2 u,v) +
2\o (\varphi u,\varphi u) + \o (u,\varphi^2 v).\ee
We are interested to study solutions $\varphi$ of the 
quadratic equation
\be \label{betaaEqu} \beta = \o_{\varphi,\varphi} =0. \ee
More specifically, we will be interested to study Lie algebras of endomorphisms % on a symplectic vector space, 
which satisfy this condition.

\bd  \label{def:symplecticalgs} 
Let $(V, \omega)$ be a symplectic vector space. 
An abelian Lie subalgebra $\gn$ of $\End(V)$ will be called 
\emph{symplectic} for $\o$ if, for all $\varphi, \psi \in \gn$, 
$u,v \in V$,  the 
relation 
\begin{equation} 0 = 
 \o (\varphi \psi u,v) +
\o (\varphi u,\psi v)  + \o (\psi u,\varphi v)+ \o (u,\varphi \psi v )
\end{equation} 
is satisfied. 
\ed 

The analysis of abelian symplectic subalgebras of endomorphisms is 
motivated by Proposition \ref{prop:abelian_red}, which shows that 
such algebras arise from central symplectic reduction to abelian Lie algebras. 
They can also be viewed as
generalizing abelian subalgebras of the symplectic Lie algebra
$\gs\gp(\omega) \subset \End(V)$. Our results on such algebras 
will be a principal tool for our investigations in Section \ref{sect:threestep}.
A first application is discussed in Theorem \ref{thm:onedimabelian_reduction}
below. 

%\subsection{Symplectic subalgebras generated by one endomorphism}
\subsection{Invariant Lagrangian subspaces for nilpotent endomorphisms}
\label{InvsubspacesSection}

Let 
$\varphi  \in \End V$ an endomorphism which is a solution of \eqref{betaaEqu}.  A basic and straightforward observation is:

\bl \label{lem:basics} Let $\alpha = \o_{\varphi}$. Then:
\begin{enumerate} 
\item 
$\varphi$ is skew with respect to $\alpha$.
\item $\ker \alpha$ is a 
$\varphi$-invariant subspace of $V$.
\item $\im \varphi  \perp_{\omega} (\ker  \alpha \cap \ker 
\varphi)$.
\item For all $Z \in \ker \alpha \cap \ker \varphi$, $Z^{\perp_{\omega}}$  (the ortohogonal of $Z$ with respect to $\omega$) 
is a $\varphi$-invariant subspace of $V$.
\item  $(\im \varphi)^{\perp_{\omega}} \cap \ker \varphi$ is contained in $\ker \alpha$. 
\end{enumerate}
\el  

\bl \label{lem:imagesperp} Let $\varphi  \in \End V$ be an  endomorphism which satisfies $\varphi^k = 0$, 
for some $k \geq 1$, and 
which is a solution of the equation \eqref{betaaEqu}. 
Then %,  with  respect to $\o$,  
\[ \im  \varphi^{j} \perp_{\omega} \im   \varphi^{k-j} ,\]
for all $j \in \{ 0, 1, \ldots  , k \}$. 
\el  

\pf We may assume $k \geq 2$.
Clearly, the statement is true for  $j \in \{0,k\}$.
Let $u,v \in V$ and define $$ \tau_{j} = \; \o(\varphi^j(u),\varphi^{k-j}(v)) \; , \;  \; 1 \leq j \leq k-1 \; . $$
Since $\varphi$ satisfies  \eqref{betaaEqu}, we have equations 
\begin{eqnarray} \tau_{2} + 2 \tau_{1}  & = & 0 \\
\tau_{j+1} + 2 \tau_{j} + \tau_{j-1}  & = & 0  \;  ,  \;  j \in \{2, \ldots, k-2\} \\
2 \tau_{k-1} + \tau_{k-2} & =  & 0
\end{eqnarray}
These define a linear system 
$$   A  \cdot \; \left(\begin{matrix} \tau_{1} \\ \vdots \\ \tau_{k-1} \end{matrix}\right)   \;  = \;  \;
\left(\begin{matrix} 0 \\ \vdots \\ 0 \end{matrix}\right) \; , $$
where $A \in {\rm Mat}(k-1,k-1)$ is a matrix of the following form
(non-zero entries shown): 
$$ A \;  = \;   \left(\begin{matrix} 2 & 1 & & \\ 1 & 2 &1 & \\ 
  &     \ddots    & \ddots & \ddots & \\
 &   &  1 & 2 & 1  \\
 &    &     &  1 & 2 \\
  \end{matrix}\right)    \; \; \; . $$ 
Since $A$ is nonsingular, we conclude that $\tau_{j} = 0$, for all $j$.
This shows that $\im \varphi^j$ is perpendicular to $\im  \varphi^{k-j}$ with respect to $\o$.
\epf 

We deduce: 
\bp \label{prop:zperp}
 Let $\varphi  \in \End V$ be an  endomorphism which satisfies $\varphi^k = 0$, 
for some $k \geq 1$, and 
which is a solution of the equation \eqref{betaaEqu}. 
Then 
\begin{enumerate}
\item $\im \varphi^{k-1} \subset \ker \a$. In particular, 
the alternating form $\a$ is degenerate.
\item For all $Z \in \im \varphi^{k-1}$, the orthogonal
complement $Z^{\perp_{\omega}}$ is invariant by $\varphi$. 
\end{enumerate} 
\ep 
\pf By Lemma \ref{lem:imagesperp}, $\im  \varphi^{k-1} \perp_{\omega} \im \varphi$.
Since $\im  \varphi^{k-1}$ is contained in $\ker \varphi$, 5.\ and 4.\ of Lemma \ref{lem:basics}
imply the statements of the proposition. 
\epf 

We call a subspace of $(V,\omega)$ \emph{Lagrangian} if it is isotropic for $\o$ and of maximal dimension with this property. Our main observation is:

\bt  \label{thm:Lagrange1}
Let $(V,\o )$ be a vector space with alternating bilinear form.  
Let $\varphi  \in \End V$ an  endomorphism which satisfies $\varphi^k = 0$, 
for some $k \geq 1$, and 
which is a solution of the quadratic equation 
\eqref{betaaEqu}. Then
there exists a $\varphi$-invariant Lagrangian 
subspace of $(V,\o)$.
\et

\pf We may assume that $\varphi^{k-1} \neq 0$.
The proof is now by induction on the dimension of $V$.
Let $\dim V = n$, and assume that the result is shown
for all vector spaces with alternating form, which have dimension less than $n$.
Let $0 \neq Z \in \im \varphi^{k-1}$. By Proposition \ref{prop:zperp}, the subspace $Z^{\perp_{\omega}}$ is invariant by $\varphi$.  Consider the  vector space $W = Z^{\perp_{\omega}}/ \, \langle Z \rangle$ with alternating form $\bar \omega$ 
induced by $\omega$. Since $\varphi(Z) = 0$, $W$ has a nilpotent endomorphism $\bar \varphi$, which is induced by  $\varphi$, and also satisfies $\bar \beta = \bar \omega_{\bar \varphi, \bar \varphi} = 0$. The  induction hypothesis implies that  $\bar \varphi$ has a Lagrangian $\bar \varphi$-invariant subspace. 
Its preimage in $Z^{\perp_{\omega}}$ is a Lagrangian 
subspace of $(V,\o)$, which by construction 
is invariant by $\varphi$.
\epf

%%%%%%%%%%%%%%%%%%%%%%%%%%%%%%%%%%%%

The theorem is certainly well known under the \emph{much stronger} assumption that $\varphi$
is \emph{skew} with respect to a non-degenerate form 
$\omega$, that is, $\varphi$ is contained in
in the Lie algebra $\gsp(\omega) \subset \End V$ of the symplectic group $\Sp(\omega)$.
Indeed, we recall:

\bp Let $(V,\o )$ be a symplectic vector space,  
and ${\mathfrak D}  \subset \gsp(\omega)$ a 
Lie subalgebra which consists of  
nilpotent endomorphisms. Then
there exists a $\mathfrak D$-invariant Lagrangian 
subspace of $V$.
\ep
\pf  Since ${\mathfrak D}$ is nilpotent,  it is contained in a maximal triangular  subalgebra of $\gsp(\omega)$. By the conjugacy theorem, every such subalgebra stabilizes a Lagrangian subspace of $V$.
\epf 
 
\subsection{Symplectic quadratic algebras of endomorphisms}
\label{sect:abqsa}
A Lie subalgebra $\gn$ of $\End(V)$ will be called \emph{quadratic}
if for all $\varphi \in \gn$, $\varphi^2 = 0$. In such $\gn$, all elements $\varphi, \psi  \in \gn$ anti-commute,
that is  $\varphi \, \psi = - \psi \, \varphi$. By Engel's 
theorem, the Lie algebra $\gn$ is nilpotent. Notice 
that a quadratic subalgebra $\gn$ is  \emph{abelian}  
if and only if 
$\varphi \, \psi = 0$, for all $\varphi, \psi  \in \gn$. 
A quadratic abelian algebra $\gn$
is \emph{symplectic} for $\omega$ in the sense of Definition \ref{def:symplecticalgs} if and only if  
\begin{equation} \label{eq:im_isotropic}
\omega( \varphi u, \varphi v ) = 0 \; , \end{equation} 
for all $\varphi \in \gn$ and all $u,v \in V$. 
That is,  a quadratic abelian subalgebra $\gn$ is symplectic if 
the image of every $\varphi \in  \gn$ is 
isotropic. 
The main motivation to study symplectic abelian quadratic algebras is the fact that they arise from central reductions of  three-step nilpotent symplectic Lie algebras to abelian Lie algebras (compare Section \ref{sect:reduction}).\\
% and the proof of Proposition \ref{prop:red3to2}). \\

In this section we prove that in low dimensions every abelian symplectic quadratic algebra admits invariant Lagrangian subspaces.  However, quite to the contrary, we also construct an abelian quadratic symplectic algebra (acting on a six-dimensional vector space), which does not have an invariant Lagrangian subspace, see Section \ref{sect:q6}. 

\subsubsection{Invariant Lagrangian subspaces in dimensions less than six}
We first remark that in low dimensions Theorem \ref{thm:Lagrange1} holds for quadratic endomorphisms even without the additional assumption that the condition \eqref{eq:im_isotropic} is satisfied:

\bl \label{lem:philow}
Let $(V,\omega)$ be a symplectic vector space of dimension 
less than or equal to four, and let $\varphi \in \End(V)$ satisfy $\varphi^2=0$.
Then there exists a $\varphi$-invariant Lagrangian subspace $\ga$ in $V$.
\el 
\pf If $\im \varphi$ is isotropic for $\omega$, every Lagrangian subspace
$\ga$ which contains  $\im \varphi$ is invariant. Otherwise, we may assume
that $\dim V = 4$ and that $\im \varphi$ is a non-degenerate subspace. 
Let $U =( \im \varphi)^{\perp_{\omega}}$ be the orthogonal complement,
and let $0 \neq u \in U$. Then $\ga = \langle u, \varphi u \rangle$ is a 
$\varphi$-invariant Lagrangian subspace. 
\epf 

The following shows that symplectic abelian 
quadratic subalgebras in low dimensions
always admit invariant Lagrangian subspaces: 

\bp \label{prop:quadraticlow}
Let $(V,\omega)$ be a symplectic vector space of dimension
less than or equal to four. Then every abelian quadratic symplectic subalgebra $\gq \subset \End(V)$ has an invariant Lagrangian subspace in $(V,\omega)$.
\ep
\pf  Let us consider $\im \gq = \gq V$. If $\im \gq$ is one-dimensional then every Lagrangian subspace $\ga$ of $(V,\omega)$ which contains $ \im \gq$ is invariant. In particular, if $\dim V =2$ then there exists an invariant Lagrangian subspace. 

Assume that $V$ is four-dimensional and has no invariant Lagrangian subspace for $\gq$. Because $\gq$ is abelian, $\gq^2=0$. Therefore, every subspace of $\im \gq$ is annihilated by $\gq$, and, in particular, it must be invariant. It follows that $\im \gq$ must be two-dimensional and $\omega$ is  non-degenerate on $\im \gq$. Since the image of every $\varphi \in \gq$ is isotropic, this also implies that $\im \varphi$ is one-dimensional, for all $\varphi \neq 0$.  

Now choose $u_{1} \in V$, $\varphi_{1} \in \gq$ with $\varphi_{1} u_{1} \neq 0$, $0 \neq u_{2} \in \im \gq^{\perp_{\omega}} \cap \ker \varphi_{1}$. 
Then $\ker \varphi_{1} = \langle u_{2}, \im \gq \rangle$. Moreover,
there exists $\varphi_{2} \in 
\gq$ such that $ \im \gq = \langle \varphi_{1} u_{1}, \varphi_{2} u_{2} \rangle$. 
(Otherwise, $\ga = \langle \varphi_{1} u_{1}, u_{2} \rangle$ would be a Lagrangian subspace, which is invariant for all $\varphi_{2} \in \gq$.)
% Therefore, 
%$ \im \gq = \langle \varphi_{1} u_{1}, \varphi_{2} u_{2} \rangle$, 
%for some $\varphi_{2} \in \gq$. 

Clearly, we may also assume that $u_{1} \in \ker \varphi_{2}$, unless
$\ker \varphi_{1} = \ker \varphi_{2}$. The latter case is not possible, since
$\varphi_{2} u_{2} \neq 0$.  
Since $(\varphi_{1} + \varphi_{2}) \, u_{1} = \varphi_{1} u_{1}$ and 
$(\varphi_{1} + \varphi_{2}) \, u_{2} = \varphi_{2} u_{2}$ are linearly independent,
the element $\varphi_{1} + \varphi_{2} \in \gq$ has rank two, which is a contradiction. %  since $\omega$ is non-degenerate on $\im \gq$. 
\epf

\subsubsection{A quadratic symplectic algebra without invariant Lagrangian subspace}  \label{sect:q6}
\noindent
Consider a six-dimensional 
vector space $V_{6}$  with basis  
$$\{ u_{1},  u_2, v_{1}, v_{2}, w_{1}, w_{2} \}$$  and 
define subspaces 
%\begin{eqnarray} N =  \langle  x, y \rangle \; ,  \;  U =  \langle  u_{1},  u_{2} \rangle \; , \;
%V =  \langle  v_{1},  v_{2} \rangle \\
% W =  \langle  w_{1},  w_{2} \rangle \; , \; 
%  Z =   \langle  z_{1},  z_{2} \rangle \end{eqnarray} 
%so that 
%$ V_{10 }   = N \oplus U \oplus V \oplus W \oplus Z $.
$U =  \langle  u_{1},  u_{2} \rangle$, 
$V =  \langle  v_{1},  v_{2} \rangle$, 
$W =  \langle  w_{1},  w_{2} \rangle$,  
so that 
\begin{equation} V_{6}   = V \oplus U \oplus W \; . \label{eq:V6}
\end{equation}

We now choose a symplectic form $\omega = \omega_S$
on $V_6$ such that the decomposition \eqref{eq:V6} is
isotropic (that is, $V$, $W$ are isotropic and $U =  (V \oplus W)^{\perp_{\omega}}$).
%\begin{enumerate}
%\item 
%$\omega$ is non-degenerate on $U$, 
%\item the decomposition $V \oplus W$ is isotropic, and 
%\item $U \perp_\omega (V \oplus W)$ is orthogonal. 
%\end{enumerate} 

To $\omega$ we associate the non-singular $2 \times2$-matrix   
\begin{equation} \label{eq:Sofomega}
S = \left(\begin{matrix} \omega(v_1, w_1) & \omega(v_1, w_2) \\   \omega(v_2, w_1) & \omega(v_2, w_2)
\end{matrix}\right) \; . 
\end{equation}
Note that, conversely, to every such $S$ there exists $\omega = \omega_S$ 
such that  \eqref{eq:V6} is an
isotropic decomposition 
and \eqref{eq:Sofomega} is satisfied.\\

Now define linear operators $X, Y \in \End(V_6)$ by declaring
\begin{equation} \label{eq:defq6}
X u_i = v_i ,\,  Y u_i = w_i, \;   X v_i = X w_i = Y v_i = Y w_i = 0\, , \; \, i \in 1,2\, . 
\end{equation}
% \hspace{1cm}

\bl The linear span $\gq_6 \subset \End(V_6)$ of $X$ and $Y$ is a two-\-dimen\-sio\-nal abelian quadratic algebra 
of endomorphism of $V_6$. The algebra $\gq_6$ 
is symplectic with respect to $\omega_S$ if and
only if $S$ is symmetric. 
\el
\pf Since $X Y = Y X = X^2 = Y^2 = 0$, $\gq  = \gq_6=  \langle X, Y \rangle$ is indeed an abelian quadratic subalgebra of endomorphisms. 
Now let $\varphi = \alpha X + \beta Y \in \gq$. Then
\begin{equation*}  \begin{split} 
\beta_\varphi( u_i , u_j)   = & 
 \; \,  \alpha^2 \omega( X u_i, X u_j)   + \alpha \beta  (\omega( X u_i, Y u_j) + \omega( Y u_i,  X v_j)) \\ & + \beta^2  \omega( Y u_i, Y u_j) \\
 = &  \; \,  \alpha \beta \,  (\omega( v_i, w_j) + \omega( w_i,  v_j)) \; .
\end{split} 
\end{equation*}
Hence, $\beta_\varphi = 0$, for all $\varphi \in \gq$, if and only 
if $\omega(v_1, w_2) = \omega(v_2, w_1)$. That is,
$\gq$ is symplectic if and only if $S$ is symmetric.
\epf

The following proposition shows that there do exist quadratic 
symplectic algebras which do not admit a Lagrangian invariant
subspace: 

\bp \label{prop:q6Lagrangian}
Let $S$ be a symmetric non-singular $2 \times 2$-matrix. Then the symplectic quadratic algebra $\gq_6 = \langle X, Y \rangle \subset \End(V_6)$ admits an invariant Lagrangian subspace 
$\ga$ (that is,  $\gq_6 \, \ga \subseteq \ga$) in the symplectic vector space $(V_6, \omega_S)$ if and only if $\det S < 0$. 
\ep 
\pf Assume that $\ga$ is a Lagrangian (that is, $\ga$ is three-dimensional and isotropic) subspace of $(V_{6}, \omega_{S})$, which satisfies $\gq \ga \subset \ga$. Observe that $\im \gq = 
\gq  V_6 = V \oplus W$ is a non-degenerate subspace with respect to $\omega = \omega_S$. Therefore, $ \ga \cap \im \gq$ is at most two-dimensional. We infer that there exists $a = u + w \in \ga$, such that $0 \neq u = \alpha u_{1} + \beta u_{2} \in U$ and $w \in V \oplus W$.
Then $ \gq a \subset \ga$ contains $X u =  \alpha X u_{1} + \beta X u_{2} = \alpha v_{1} + \beta v_{2}$ and $Y u = \alpha w_{1} + \beta w_{2}$. We compute $$ \omega(X u, Y u) =  \alpha^2 \omega(v_1, w_1) + 2 \alpha \beta \omega(v_2, w_1) + \beta^2 \omega(v_2, w_2) \; . $$
We may view the expression $\omega(X u, Y u)$ as a quadratic form in the variables $\alpha, \beta$. Hence, if the 
matrix $S$, as defined in \eqref{eq:Sofomega} is positive (or negative) definite, we conclude that $\omega(X u, Y u) \neq 0$.
This is a contradiction to the assumption that $\ga$ is isotropic.
Therefore, $S$ is not definite.

Conversely, assume that $S$ is not definite. Then there exists
$0 \neq u =  \alpha  u_{1} + \beta  u_{2} \in U$ with
 $\omega(X u, Y u) = 0$. Therefore, $\ga = \langle u, X u, Yu \rangle$ is a $\gq$- invariant Lagrangian ideal.
\epf

\subsection{Application to symplectic Lie algebras}

\bt  \label{thm:onedimabelian_reduction} 
Every nilpotent symplectic Lie algebra $(\gg ,\o )$ which admits a 
central element $H$ such that the reduction 
$\bar \gg = H^\perp/\spanof{H}$ is 
abelian has a Lagrangian ideal. 
\et 

\pf  Let $\varphi$ be the derivation 
of $\bar \gg$ induced by an element $\xi \in \gg \setminus H^\perp$,
$\omega(\xi,H) =1$, and 
$\l = \omega(\xi, ad_\xi|_{\bar \gg} \cdot )\in \bar \gg^*$. 
Using Proposition \ref{prop:red_ox}, we have that $(\gg, \omega)$
is the symplectic oxidation of $(\bar \gg, \bar \omega)$ for the
data $\varphi$, $\lambda$. In particular, the Lie product of
$\gg$ is described with respect to an isotropic decomposition 
of the form \eqref{eq:lie0} by equations  \eqref{eq:lie1},  \eqref{eq:lie2}.
Here,   the extension two-cocycle
$\alpha$ appearing in  \eqref{eq:lie1}  satisfies $\alpha = \omega_{\varphi}$, according to Proposition \ref{prop:symplecticox}.
Moreover, by Proposition \ref{prop:Lieox}, also  \eqref{eq:cobound_alphaphi}, that 
is, $\beta = \alpha_\varphi= \omega_{\varphi,\varphi}= -\partial \l$ is satisfied. Since
$\bar \gg$ is abelian, $\partial \lambda = 0$. In other words, \eqref{betaaEqu} 
is satisfied for $\varphi$. Since \eqref{betaaEqu} holds, according to Theorem \ref{thm:Lagrange1}, the vector space
$\bar \gg$ has a 
$\varphi$-invariant subspace, which is Lagrangian
for the induced symplectic form $\bar \omega$. 
This subspace is,  of course, 
also an ideal $\bar \ga$ of $\bar \gg$, since $\bar \gg$ is abelian.
The preimage $\ga$ of $\bar \ga$ under the projection
$H^\perp \ra \bar \gg$ is then a Lagrangian ideal of $\gg$.  
\epf 

\begin{remark} As follows from Proposition \ref{prop:symplecticox}, for every 
endomorphism $\varphi$ of a symplectic vector space $(V, \bar  \omega)$, which satisfies \eqref{betaaEqu} for $\bar \omega$, there exist symplectic oxidations $(\gg, \omega) = ( \bar \gg_{\varphi, \lambda}, \omega)$, for every $\lambda \in V^*$. This construction thus provides a rich class of examples of symplectic Lie algebras $(\gg,\o)$, which have a Lagrangian ideal. 
\end{remark}
Further applications can be found in the proof of Theorem \ref{thm:red38}.

\section{Isotropic ideals in nilpotent symplectic Lie algebras}

In the first part of this section we derive basic orthogonality relations of the ideals in the central series of a 
nilpotent symplectic Lie algebra $(\gg, \o)$.
These are used  to detect lower bounds for the 
symplectic rank of $(\gg, \o)$ in terms of the nilpotency
class of $\gg$. In particular, filiform symplectic Lie 
algebras are shown to be of maximal symplectic rank.
More precisely, we show that filiform symplectic Lie algebras 
admit a unique and characteristic Lagrangian ideal. 
This allows to classify filiform symplectic Lie algebras 
via Lagrangian extension classes of certain flat Lie algebras. 

\subsection{Existence of isotropic ideals}
We have the following orthogonality relations 
in every symplectic Lie algebra $(\gg, \omega)$: 
\bl \label{lem:basic_orthogonality} 
Let $(\gg, \omega)$ be a symplectic Lie algebra. 
Then, for all $i \geq 0$,  $$ C^{i} \gg \subseteq (C_{i} \gg)^{\perp_{\o}} \; . $$
In particular, $[\gg, \gg] \subseteq \Z(\gg)^{\perp_{\o}}$. 
\el 
\pf The proof is by induction. Assume that the statement is 
true for all $i \leq \ell-1$. For the induction step, consider $z \in C_{\ell} \, \gg$ and 
$w = [u, v] \in C^\ell \gg$, where $v \in  C^{\ell-1} \gg$.
Thus $\omega(w,z) = \omega( [u, v],z) = \omega([u,z], v) + 
\omega([z,v],u)$. Note that $ [u, z] \in C_{\ell-1} \, \gg$. Thus
by induction the first summand is zero. On the other hand,
by \eqref{eq:cs2}, $[z,v] \in C_{\ell -\ell + 1 -1}\, \gg 
 = C_{0} \gg = \{ 0 \}$. Therefore, $\omega(w,z) = 0$.
 \epf 

As a consequence, nilpotent symplectic Lie algebras
admit non-trivial isotropic ideals: 
\bp \label{prop:isotropic_ideals} Let $(\gg, \omega)$ be 
a symplectic Lie algebra which is of nilpotency class $k$. 
Then the following hold:
\begin{enumerate}
\item For all $i,j$,  with $i+j \geq k$, $C^{i} \gg \perp_{\omega} C^{j} \gg$.
\item For all  $i$, with $2i \geq k$, $C^{i} \gg$ is an
isotropic ideal. 
\item If $k= 2 \ell$ is even then $(\gg, \omega)$ has an isotropic
ideal of dimension at least $\ell$.
\item If $k =  2 \ell -1$ is odd then $(\gg, \omega)$ has an isotropic
ideal of dimension at least $\ell$.
\end{enumerate}
\ep 
\pf 
Recall from $\eqref{eq:cs3}$ that $C^{i} \gg \subseteq C_{k-i} \gg$ if $\gg$ has 
nilpotency class $k$.
Assuming  $i +j \geq k$, we have $k-i \leq j$. Hence, $C_{k-i} \gg \subseteq C_{j} \gg$.
On the other hand, by Lemma \ref{lem:basic_orthogonality}, $C_{j} \gg 
\subseteq (C^{j} \gg)^{\perp_{\o}}$. Therefore, $C^{i} \gg \subseteq (C^{j} \gg)^{\perp_{\o}}$.
%Recall that $C^{i} \gg \subseteq C_{k-i} \gg$. 
%Since $2i \geq k$, $k-i \leq i$. Hence, $C_{k-i} \gg \subseteq C_{i} \gg$.
%On the other hand, by Lemma \ref{lem:basic_orthogonality}, $C_{i} \gg 
%\subseteq (C^{i} \gg)^{\perp_{\o}}$. Therefore, $C^{i} \gg \subseteq (C^{i} \gg)^{\perp_{\o}}$.
This shows 1) and 2). 
Observe that for any $i \leq k$, $\dim C^{i} \gg \geq k-i$. 
If $k= 2\ell$ is even then $C^\ell \gg$ is isotropic and $\dim C^{\ell} \gg \geq \ell$,
which shows 3).

Now assume that $k =  2 \ell -1$.
 By 2), $C^{\ell} \gg$
 is an isotropic ideal of dimension at least $k - \ell  =  \ell -1 $.
 Next observe that, by 1), $C^{\ell-1} \gg \perp_{\omega} C^{\ell} \gg$. 
 Now choose an ideal $\gj$ of $\gg$, $C^{\ell} \gg \subset \gj \subseteq C^{\ell-1} \gg$ such that $C^{\ell} \gg$ has codimension one in $\gj$. Then $\gj$ is isotropic 
of dimension at least $\ell$. 
\epf 

In the two-step nilpotent case we thus have:

\bt \label{thm:2step}
Let $(\gg ,\o )$ be a two-step nilpotent 
symplectic Lie algebra. Then the following hold:
\begin{enumerate}
\item The ideal $C^1\gg =  [\gg ,\gg ]$ is isotropic.
\item $(\gg,\o)$ has a Lagrangian ideal.
\end{enumerate}
\et 

\pf For $u, v\in \gg$ and $z \in Z(\gg)$, we have 
$\o ([u,v],z)= - \o ([v,z],u) -  \o ([z,u],v)=0$.  
This implies that $[\gg ,\gg ]$ is isotropic, since 
$[\gg ,\gg ]\subset Z(\gg)$. Any maximally isotropic subspace 
$\ga$ which
contains  $[\gg ,\gg ]$ is a Lagrangian ideal.
\epf 

In the following subsection we deduce the existence of Lagrangian ideals
in filiform symplectic Lie algebras. Here we conclude with 
another application of Proposition \ref{prop:isotropic_ideals}, which 
concerns the existence of Lagrangian ideals in low dimensions: 

\bt \label{thm:dimlt8}
 Let $(\gg, \omega)$ be a nilpotent symplectic Lie algebra
of dimension less than or equal to six. Then $(\gg, \omega)$ 
has a Lagrangian ideal.
\et 
\pf Assume first that $\gg$ is six-dimensional. If $\gg$ is filiform (that is, of nilpotency class five) then $C^2 \gg$ is
a Lagrangian ideal by Corollary \ref{thm:ff_Lagrangian}, which will
be proved below.  
% Otherwise $\gg$ is (at most) four-step nilpotent. 

We now analyze the case that $\gg$ is of class four: 
%Then $C^2 \gg$ is an isotropic ideal by Proposition \ref{prop:isotropic_ideals}. Note if the center $C_{1} \gg$ is two-dimensional then $\gj = C^2 \gg + C_{1} \gg$ is a three-dimensional isotropic, hence Lagrangian,  ideal. Therefore, we may assume that the center of $\gg$ is one-dimensional. 
We may  centrally reduce $(\gg, \omega)$ to a
symplectic algebra  $(\bar \gg, \bar \omega)$ of dimension four, which is at 
most three-step nilpotent. If $\bar \gg$ is of class three then it is filiform and
therefore $(\bar \gg, \bar \omega)$ has a characteristic Lagrangian ideal.
It follows from Proposition \ref{prop:isotropic_lifting} that this ideal lifts to
a Lagrangian ideal in $(\gg, \omega)$. So we may assume that $\bar \gg$ 
is at most two step. If $\bar \gg$ is of class two then it has a two-dimensional center which is a characteristic Lagrangian ideal in $(\gg, \omega)$. (Indeed, $\bar \gg$ is 
the product of a Heisenberg algebra with a one-dimensional algebra.)
So this case can be excluded as well. Finally, if the reduction $\bar \gg$ is abelian 
then the existence of a Lagrangian ideal is ensured by Theorem \ref{thm:onedimabelian_reduction}. Hence, $(\gg, \omega)$ has a Lagrangian ideal. 

It remains to consider the case that $\gg$ is three-step nilpotent. 
This will be postponed to the proof of Theorem \ref{thm:red38}.
\epf 

\subsection{Filiform symplectic Lie algebras}
An important class of nilpotent Lie algebras are {filiform} nilpotent
Lie algebras, cf.\ \cite[Chapter 2, \S 6.2]{LGAIII}. 
Recall that a nilpotent Lie algebra $\gg$ is called  \emph{filiform} 
if its nilpotency class is maximal relative to its dimension $n = \dim \gg$. 
This is the case if and only if $\gg$ is of nilpotency 
class $k= n-1$. In a filiform Lie algebra,  
$\dim C^{i} \gg = n -i -1$, $i \geq 1$, and $C_{i}\,  \gg = C^{n-i-1} \gg$. Filiform symplectic Lie algebras
have been studied intensely, see e.g.\ \cite{GJK,Million}. 
We show now that filiform symplectic Lie algebras admit a {\em unique} Lagrangian
ideal:  

\bt \label{thm:ff_Lagrangian} Let $(\gg, \omega)$ be a filiform nilpotent symplectic Lie algebra
of dimension $n = 2 \ell$. Then $C^{\ell-1} \gg$ is a Lagrangian ideal. 
Moreover,  $C^{\ell-1} \gg$  is the only Lagrangian ideal in $(\gg, \omega)$. 
\et
\pf Since $\gg$ is filiform, it is of nilpotency class $k= 2 \ell -1$. By Proposition 
\ref{prop:isotropic_ideals}, there exists an isotropic ideal $\gj \subseteq C^{\ell-1} \gg$ containing $C^{\ell} \gg$ of codimension one.  Since $\gg$ is filiform, 
$\gj = C^{\ell-1} \gg$ and $\dim C^{\ell-1} \gg = \ell$. Therefore, $C^{\ell-1} \gg$ is
a Lagrangian ideal. 

To show uniqueness, we argue as follows. Let $\ga \subseteq \gg$ be a Lagrangian ideal
in $(\gg, \omega)$ and consider $\gh = \gg/ \ga$. Since $\dim \gh = \ell$, we have 
$C^{\ell-1} \gh = \{0 \}$. This implies that $C^{\ell-1} \gg \subseteq \ga$. Since
$\dim  C^{\ell-1} \gg = \ell = \dim \ga$, we conclude that $\ga = C^{\ell-1} \gg$. 
\epf 

Observe that every quotient $\gh$ of a filiform Lie algebra $\gg$ must be filiform
as well. In view of the correspondence theorems in Section \ref{sect:Lagrange_ext}, 
the previous theorem shows that filiform symplectic Lie algebras arise as 
Lagrangian extensions of certain flat Lie algebras $(\gh, \nabla)$, where
$\gh$ is filiform. Ignoring the precise conditions on $\nabla$, $\alpha$, which are 
required in addition to the assumption that $\gh$ is filiform, we 
call a Lagrangian extension triple $(\gh,\nabla, \alpha)$ (as defined in Theorem 
\ref{thm:ext_cocycle}) a filiform triple if the extension  $F(\gh,\nabla, \alpha)$ is
filiform. The uniqueness of the Lagrangian ideal in $\gg$ has the following
remarkable consequence: 

% \bc  The correspondence which associates to 
\bc \label{cor:filiform_corresp}
The correspondence which associates to a filiform symplectic Lie algebra  the  extension triple $(\gh,\nabla, [\alpha])$, which is defined with respect to the Lagrangian ideal 
 $C^{\ell-1} \gg$ of $(\gg, \omega)$, 
induces a bijection between isomorphism classes of filiform symplectic Lie algebras and isomorphism classes of flat Lie algebras with filiform symplectic extension cohomology class. 
\ec
\pf  Since every symplectic automorphism $\Phi: (\gg, \omega)  \ra  (\gg', \omega')$
maps $\ga = C^{\ell-1} \gg$ to $\ga' = C^{\ell-1} \gg'$, $\Phi$ is an isomorphisms
of pairs $(\gg, \ga) \ra (\gg', \ga')$, as well. Therefore, the corollary follows from Corollary
\ref{cor:Lagrangian_corresp}. 
\epf

\section{Three-step nilpotent symplectic Lie algebras} 
\label{sect:threestep}
In this section,  we present a ten-dimensional example of a three-step nilpotent symplectic Lie algebra which has less than maximal rank. In addition, we show that every three-step nilpotent symplectic Lie algebra of smaller dimension than ten has 
a Lagrangian ideal. The proofs of both results depend heavily on the methods developed in Section \ref{sect:abqsa}.

\subsection{A three-step nilpotent symplectic Lie algebra without
Lagrangian ideal} \label{3-stepcounterEx}
%
%We define a nilpotent Lie algebra $\gg_{10,3}$ of nilpotency 
%class three as follows: 
%

\paragraph{The Lie algebra $\gg_{10}$.} Consider a ten-dimensional vector space $ V_{10 }$  with basis  
$$\{ x, y, u_{1}, u_{2}, v_{1}, v_{2}, w_{1}, w_{2}, z_{1}, z_{2} \}$$  and 
define subspaces 
%\begin{eqnarray} N =  \langle  x, y \rangle \; ,  \;  U =  \langle  u_{1},  u_{2} \rangle \; , \;
%V =  \langle  v_{1},  v_{2} \rangle \\
% W =  \langle  w_{1},  w_{2} \rangle \; , \; 
%  Z =   \langle  z_{1},  z_{2} \rangle \end{eqnarray} 
%so that 
%$ V_{10 }   = N \oplus U \oplus V \oplus W \oplus Z $.
$N =  \langle  x, y \rangle$,  $U =  \langle  u_{1},  u_{2} \rangle$, 
$V =  \langle  v_{1},  v_{2} \rangle$, 
$W =  \langle  w_{1},  w_{2} \rangle$, 
$Z =   \langle  z_{1},  z_{2} \rangle$,  
so that 
$$ V_{10 }   = N \oplus U \oplus V \oplus W \oplus Z  \; . $$
We declare the non-trivial brackets 
of the basis vectors as 
\begin{eqnarray}  \label{eq:gt1}
 [ x, u_{i} ] = v_{i} \; , \; [y, u_{i}] =  w_{i} \,  ,   \\  \label{eq:gt2}
   { [ u_{i}, v_{i} ]} =  - z_{2} \; , \;   [ u_{i}, w_{i} ] =  z_{1} \; \;   ,  \;  \;   i \in {1,2}  . 
 \end{eqnarray}  
Since the Jacobi identity holds for the above brackets, 
linear extension defines 
a Lie Algebra $\gg_{10} = (V_{10}, [ \, , \,  ])$. 
The descending central series of $\gg_{10}$ is 
\begin{eqnarray}  C^1 \gg_{10} & =  & V  \oplus W \oplus Z \, ,\\
  C^2 \gg_{10} & = &  Z  \; ,\\
  C^3 \gg_{10} & = &  \{ 0 \} \; . 
\end{eqnarray}  
Moreover,  $C_1\,  \gg_{10} = Z( \gg_{10}) = Z$ and 
$C_2 \, \gg_{10} = \{ v \in \gg_{10} \mid  [ \gg_{10}, v ] \subseteq Z( \gg_{10}) \} = C^1 \gg_{10}$.  In particular, $\gg_{10}$ is a three-step nilpotent Lie algebra. \\

Note that $ \ga_{\mathrm{m}} = N \oplus C^1 \gg_{10}$
is a maximal abelian ideal of $ \gg_{10}$, which contains 
the commutator subalgebra $C^1 \gg_{10}$.
%is a six-dimensional
%abelian ideal which is contained in the 

\bl  \label{lem:abideal}
Every abelian ideal of $\gg_{10}$ is contained
in $\ga_{\mathrm{m}}$. In particular,
the maximal dimension of an abelian ideal in $\gg_{10}$ is eight.
Every (not necessarily abelian) ideal of dimension at least five contains $C^2 \gg_{10}$.
\el 
\pf 
Let $\ga$ be an abelian ideal.  
Let $a \in \ga$. We write $a = \alpha\,  u_{1} + \beta\,  u_{2} + w$, 
with $w \in \ga_{\mathrm{m}}$. Then $[x, a] = \alpha v_{1} + \beta v_{2}  \in \ga$. Therefore, $[a, [x, a]] =  - (\alpha^2 + \beta^2)  z_{2} = 0$, since $\ga$ is abelian.  This implies $a= w$. Hence, $\ga \subseteq \ga_{\mathrm{m}}$.

Suppose now that $\gj$ is an ideal, which satisfies 
 $\gj \cap  C^2 \gg_{10} = \{ 0 \}$.
Then $\gj \cap C_2 \, \gg_{10} $
is contained in the center $Z( \gg_{10})$ of $\gg_{10}$. Therefore,
$$  \gj \cap C^1 \, \gg_{10} = \gj \cap C_2 \, \gg_{10} \, \subseteq \; \gj \cap Z( \gg_{10}) = \{ 0 \} \, . $$  So $[\gg_{10}, \gj] = \{ 0 \}$, which means that $\gj$ is central. Hence, $\gj$ is contained in $C^2 \gg_{10}$ and, therefore, $\gj = \{0 \}$.  

Similarly, suppose that $\gj$ is an ideal such that  
$\gj \cap  C^2 \gg_{10}$ is one-dimensional. 
Without loss of generality (cf.\ Lemma \ref{lem:gl2action} below), 
we can assume that 
$\gj \cap  C^2 \gg_{10}$ contains $z_1$. Therefore, 
$ \gj \cap C^1 \gg_{10}$ is contained in $$\{ v \mid [\gg_{10}, v ] \subseteq \langle z_{1} \rangle \} = W \oplus  \langle z_1 \rangle . $$
This implies $[\gg_{10}, \gj] \subseteq W  \oplus  \langle z_1 \rangle$. Hence, $\gj$ is contained in the four-dimensional (abelian) 
ideal $  \langle y \rangle \oplus W \oplus  \langle z_1 \rangle$.
In particular, $\gj$ is at most four-dimensional.
\epf 

We briefly observe that there is a natural action of $\GL(2,\bR)$
by automorphisms on the Lie algebra $\gg_{10}$:

\bl  \label{lem:gl2action} 
Let $(a_{ij}) \in \GL(2,\bR)$. Then the definitions
\begin{equation*}  \begin{split}
 A x  & =  a_{11}  x + a_{21} y \, , \;    A y =  a_{12}  x + a_{22} y \, , \;  A  u_{i} = u_{i},  \\
A v_{i} & = a_{11}  v_{i} + a_{21} w_{i} \, , \;  A w_{i} = a_{12}  v_{i} + a_{22} w_{i} \\
A z_{1} & =  a_{22} z_{1} - a_{12} z_{2}   \, , \; A z_{2}  =  -a_{21} z_{1} + a_{11} z_{2} 
\end{split}
\end{equation*}
define an automorphism $A$ of  the Lie algebra $\gg_{10}$.
\el 

\paragraph{A symplectic form for $\gg_{10}$.}
%Let $S = (s_{ij})$ be a non-singular  $2 \times 2$ matrix. 
%We define a  family $\omega_{S} \in \bigwedge^2 \gg_{10}^* $ of non-degenerate two-forms on $\gg_{10}$ parametrized by $S$:
% with respect to the  above basis: 
We define a non-degenerate two-form $\omega_{o}$ on the vector space $V_{10}$: 
\begin{equation*}  \begin{split}
\omega_{o}   \, = \,  & x^* \wedge z_{1}^* \, + \, y^*  \wedge z_2^* \, + \, 
u_{1}^* \wedge u_{2}^* \,  +  \, 
 v_{1}^*  \wedge w_{1}^* \, + \, % s_{12}\,  v_{1}^* \wedge w_{2}^* \,  + \, \\   &  s_{21}\,   v_{2}^*  \wedge w_{2}^* \, + \, 
v_{2}^*  \wedge w_{2}^*
\end{split}
\end{equation*}
With respect to the form $\omega_{o}$,  the decomposition
$$  V_{10} =  N \oplus (U \oplus V \oplus W) \oplus Z $$
is isotropic,  the  subspace $(V \oplus W, \omega_{o})$ is non-degenerate with isotropic decomposition, and the sum 
$$    U \oplus (V \oplus W)  = U \perp_{\omega_{o}} (V \oplus W) $$
is orthogonal.

\bl The form $\omega_{o}$ is symplectic
for the Lie algebra $ \gg_{10}$.  In particular,  $(\gg_{10}, \omega_{o})$ is a  symplectic three-step nilpotent Lie algebra, and the maximal abelian ideal $ \ga_{\mathrm{m}} = N \oplus C^1 \gg_{10}$ is non-degenerate in $(\gg_{10}, \o_{o})$. 

\el
\pf Clearly, from \eqref{eq:gt1} and  \eqref{eq:gt2},  $\partial x^*= \partial y^*=  \partial u_{i}^* = 0$ and 
 \begin{equation*} 
\begin{split}
 & \partial v_{i}^{*} =  - x^* \wedge u_{i}^*  \; , \;  \partial w_{i}^{*} =  - y^* \wedge u_{i}^* \;  ,  \; \partial z_{1}^* =  - \, ( u_{1}^* \wedge w_{1}^{*} +   u_{2}^* \wedge w_{2}^{*} ) \\
 &  \partial z_{2}^* =  u_{1}^* \wedge v_{1}^{*} +   u_{2}^* \wedge v_{2}^{*} \; .   \end{split}
\end{equation*} Therefore,
\begin{equation*} \begin{split} 
\partial \omega_{o} & =  -x^* \wedge \partial z_{1}^* + y^* \wedge \partial z_{2}^* 
+ \sum_{i = 1,2}  ( \partial{v_{i}^*} \wedge w_{i}^* - v_{i}^* \wedge \partial  w_{i}^*)\\ 
&  = 
  x^* \wedge u_{1}^* \wedge w_{1}^{*} - x^*  \wedge  u_{2}^* \wedge w_{2}^{*} -   y^* \wedge u_{1}^* \wedge v_{1}^{*} -   y^* \wedge   u_{2}^* \wedge v_{2}^{*} \\
 &  \quad -  \sum_{i = 1,2}  (x^* \wedge u_{i}^* \wedge w_{i}^* -  y^* \wedge u_{i}^*   \wedge v_{i}^*)  \; \\
 &   = \;  0 \; . 
 \end{split}
\end{equation*}
Hence, $(\gg_{10}, \omega_{o})$ is symplectic. 
\epf

\emph{Remark that the action $\SL(2,\bR)$ on $\gg_{10}$ which 
is defined in Lemma \ref{lem:gl2action} is by \emph{symplectic 
automorphisms} with respect to $\omega_{o}$.} \\

We recall (cf.\ Definition \ref{def:symp_rank}) that the rank of a 
symplectic Lie algebra is defined to be 
the maximal dimension of an isotropic ideal.  

\bp The symplectic Lie algebra $(\gg_{10}, \omega_{o})$ has symplectic rank four. In particular, $(\gg_{10}, \omega_{o})$ has no Lagrangian ideal. 
\ep 
\pf  Observe first that  the ideal $\gj =  \langle y \rangle \oplus W \oplus  \langle z_1 \rangle$ is isotropic and of dimension four. Therefore, the rank of 
$(\gg_{10}, \omega_{o})$ is at least four. 

%To prove that $(\gg_{10}, \omega_{o})$ has no Lagrangian (that is, five-dimensional and isotropic) ideal,  we remark that every such ideal contains $C^2 \gg_{10}$, 
%
Let us consider next the symplectic reduction of  $(\gg_{10}, \omega_{o})$ with respect to the isotropic ideal $C^2 \gg_{10}$.
The reduction consists of an abelian symplectic
Lie algebra $(\ga_{6}, \omega_{o})$, which is defined
on the vector space 
$V_{6} = (C^2 \gg)^\perp/ C^2 \gg \cong U \oplus V \oplus W$; the
symplectic structure on $V_{6}$ is given by the restriction of 
$\omega_{o}$. 

The adjoint representation  $\ad$ of  $\gg_{10}$ induces a 
two-dimensional Lie subalgebra $\gn= \ad(N)|_{V_{6}}$ % of $\End(V_{6})$
of derivations of the symplectic
Lie algebra $(\ga_{6}, \omega_{o})$. 
It is obtained by restricting  $\ad$ to
$ (C^2 \gg)^\perp/ C^2 \gg$. 
Note that $\gn$ has a basis $X= \ad(x)$, $Y = \ad(y)$, which, by \eqref{eq:gt1}, satisfies 
$$ X u_i = v_i ,\,  Y u_i = w_i, \,  X v_i = X w_i = Y v_i = Y w_i = 0 \; , \, \, i \in 1,2 \, .$$
In particular, for every element $\varphi \in \gn$, we have 
$\varphi^2 = 0$. Indeed, the subalgebra $\gn$ coincides
with the quadratic algebra of endomorphisms $\gq_6$ of 
$V_6$ as defined in Section \ref{sect:q6}, equation \eqref{eq:defq6}.

Assume now that  $\ga$ is a Lagrangian ideal in $(\gg_{10}, \omega_{o})$. Since,
by Lemma \ref{lem:abideal}, every Lagrangian ideal $\ga$
of $\gg_{10}$ contains $C^2 \gg_{10}$, 
$\ga \subset C^2 \gg_{10}^{\perp_\omega}$ 
projects to a Lagrangian subspace of 
$(V_{6}, \omega_{o})$, which is furthermore 
invariant by the Lie algebra of  operators $\gn = \gq_6$.

We observe that the matrix $S$ associated to $\omega_{o}$ in
\eqref{eq:Sofomega} is the identity matrix, and 
$\omega_o|_{V_{6}} = \o_{S}$.  
Therefore, $\gn$ is a quadratic symplectic algebra with
respect to $\omega_o|_{V_{6}}$. According to Proposition \ref{prop:q6Lagrangian}, 
$(V_{6}, \omega_{o})$ does not admit a Lagrangian
$\gn$-invariant subspace, since $\det S >0$. 
This is a contradiction to our
assumption that $\ga$ is a Lagrangian ideal in 
$(\ga_{6}, \omega_{o})$.
\epf
%%%%%%%%%%%%%%%%%%%%%%%%%%%%%%%%%%%%

Theorem \ref{thm:red38} below shows that $(\gg_{10}, \omega_{o})$ is an example for a three-step nilpotent 
symplectic Lie algebra without Lagrangian ideal, which
is of minimal dimension. 
\subsection{Three step nilpotent Lie algebras of dimension less than ten}

Here we show that every three step nilpotent symplectic Lie algebra of dimension less than ten admits a Lagrangian ideal. 
We start with the following proposition which ensures that a large class of three-step nilpotent symplectic Lie algebras admits Lagrangian ideals:

\bp \label{prop:red3to2}
Let $(\gg, \omega)$ be a three step nilpotent symplectic Lie algebra
which has a central reduction to a two-step nilpotent symplectic Lie algebra of
dimension at most four. Then $(\gg, \omega)$ admits a 
Lagrangian ideal $\ga$ which contains $C^2 \gg$. 
\ep 
\pf By assumption,  there exists a central and isotropic 
ideal $\gj$ such the symplectic reduction of $(\gg, \omega)$
with respect to $\gj$  is a two-step nilpotent symplectic Lie algebra 
$(\bar \gg, \omega)$\footnote{In this proof, we deviate from our previous convention and denote the symplectic form on the reduction also with $\omega$.}, and $\bar \gg$  is of dimension at most four. 
We may assume without loss of generality that $\gj$ contains
$C^2 \gg$, since $C^2 \gg$ is isotropic and also orthogonal to $\gj$. (Indeed, $C^2 \gg$ contained in $\Z(\gg)$ and it is orthogonal to $\Z(\gg)$, by Lemma  \ref{lem:basic_orthogonality}.)

We may choose an  isotropic decomposition 
$$\gg = N \oplus W \oplus \gj \; ,  $$ 
where $ \gj^{\perp_{\omega}} = W \oplus  \gj$ is an ideal in $\gg$. (Recall that 
$ \gj^{\perp_{\omega}}$   contains $C^1 \gg = [\gg, \gg ]$.) 
Since $ \gj^{\perp_{\omega}}$ is an ideal, the adjoint representation of $N$ 
induces a subspace $\gq= \ad(N)|_{\bar \gg}$ of derivations of 
$$ \bar \gg =  \gj^{\perp_{\omega}}/  \, \gj \; . $$
Clearly (see Proposition \ref{prop:isotropic_lifting}), if $(\bar \gg, \omega)$ admits a Lagrangian ideal $\bar \ga$ which satisfies $\gq \bar \ga \subset \bar \ga$, then the pull back 
$\ga$ of $\bar \ga$ under the natural homomorphism 
$$ \gj^{\perp_{\omega}} \rightarrow
\bar \gg$$ is a Lagrangian ideal in $(\gg, \omega)$,  
which contains $\gj$. Since $\gj$ contains $C^2 \gg$, this will prove the proposition.
We claim now that such an ideal $\bar \ga$ always exists.

Since $[N, [N, \gg]] \subset C^2 \gg$, every $\varphi \in \gq$ 
satisfies $\varphi^2 = 0$. 
%Moreover, by Proposition \ref{prop:reduction},
%every $\varphi \in \gq$ satisfies the coboundary condition \eqref{eq:cobound}. 
Note further that $\gq$ is also an
abelian Lie subalgebra of $\Der(\bar \gg)$, since 
$[ \, [N, N] , \gg] \subseteq  C^2 \gg$. Similarly, since
$ [ [N, W], W] \subseteq C^2 \gg$, we deduce that
$\im \gq$ is contained in the center of $\bar \gg$. This implies
that $\im \gq \perp_{\omega} C^1 \bar \gg$.
In particular, $\gq$ preserves the ortohogonal 
$(C^1 \bar \gg)^{\perp_{\omega}}$.

If the Lie algebra $\bar \gg$ is abelian then  $\gq$ is an abelian 
quadratic symplectic subalgebra  (see Section \ref{sect:abqsa}), 
since its elements satisfy
\eqref{eq:cobound_zero}, by Proposition \ref{prop:abelian_red}.
Therefore, in the latter case the existence of $\bar \ga$ is ensured by Proposition \ref{prop:quadraticlow}, which may be applied since the reduction $\bar \gg$ is at most four-dimensional.

Therefore, it remains the case  that  $\bar \gg$ is 
of nilpotency class two and of dimension four. Recall that then $C^1 \bar \gg$ 
is an isotropic ideal in $(\bar \gg, \omega)$. Therefore,  if $\dim C^1 \bar \gg = 2$ we can put $\bar \ga =  C^1 \bar \gg$. We thus assume now that  $\dim C^1 \bar \gg = 1$.
%Otherwise Lemma \ref{lem:qLagrange_red} implies that 
As remarked above, $\gq$ preserves the ortohogonal $(C^1 \bar \gg)^{\perp_{\omega}}$. Therefore, 
$(\bar \gg, \omega)$ reduces 
to a two-dimensional symplectic vector space $(V_{2}, \omega)$
and $\gq$, as well as $\bar \gg$, act on $V_{2}$. The pull back $\bar \ga$ 
of any $\gq$- and $\bar \gg$-invariant Lagrangian subspace in $V_{2}$ will be a
$\gq$-invariant ideal in $\bar \gg$. Since $V_{2}$ is two-dimensional, and the algebra of endomorphisms of $V_{2}$, which is generated by $\gq$ and $\bar \gg$, is nilpotent, such a subspace clearly exists. This shows that $\bar \ga$ exists if $\bar \gg$ is two-step nilpotent and of dimension four.
\epf 

We now prove:
\bt \label{thm:red38}
Let $(\gg, \omega)$ be a three step nilpotent symplectic Lie algebra of 
dimension less than or equal to eight. Then $(\gg, \omega)$ admits a 
Lagrangian ideal $\ga$ which contains $C^2 \gg$. 
\et 
\pf By Theorem \ref{thm:2step}, Lagrangian ideals always exist 
if $\gg$ is two-step nilpotent. We may thus assume 
from the beginning that $\gg$ is of class three. 

For the construction of $\ga$,  
we reduce now $(\gg, \omega)$ by the central
ideal $C^2 \gg$ to a symplectic Lie algebra 
$(\bar \gg, \omega)$.  (In a class three nilpotent symplectic Lie algebra 
$C^2 \gg$ is  isotropic by 2.\ of Proposition \ref{prop:isotropic_ideals}.)

Assume first that $\dim  C^2 \gg$ is
at least two.  Then the reduction $\bar \gg$ is at most four-dimensional, since $\dim \gg \leq 8$, and $\bar \gg$ 
is two-step nilpotent.  
In this case, the existence of $\ga$ is ensured by
Proposition \ref{prop:red3to2}.

Therefore, we can assume now that $C^2 \gg$ is one-dimensional.
This means that we may choose an isotropic
decomposition $$ \gg \, = \;  \langle \xi \rangle \oplus W \oplus C^2 \gg \; .  $$
Moreover, in the view of Theorem \ref{thm:onedimabelian_reduction},  it suffices to consider the case that the
reduction $\bar \gg$ is of class two (and not abelian). 
Let $\varphi = \ad \xi_{|_{\bar \gg}}$ be the  derivation
on $\bar \gg$ which is induced by the reduction. 
Clearly, $\varphi^2=0$.  
Similarly, we have $\varphi \, \bar \gg  \subseteq \Z(\bar \gg)$.
We need to construct a $\varphi$-invariant 
Lagrangian ideal $\bar \ga$ in $(\bar \gg, \omega)$. (As usual,
its pullback to $\gg$ is the required Lagrangian ideal $\ga$ in $(\gg,\omega)$.)

If the commutator 
$C^1 \bar \gg$ has dimension three, $\bar \ga = C^1 \bar \gg$ is such 
an invariant Lagrangian ideal, since $\dim \bar \gg = 6$. 
If this is not the case, reduction with respect to $C^1 \bar \gg$ gives 
a (now at most four-dimensional) symplectic abelian Lie algebra
$(\bar \gg', \omega) = (V, \omega)$. Note that $(\bar \gg', \omega)$ has an induced action of $\varphi$, since 
$\varphi\,  \bar \gg \subseteq \Z(\bar \gg) \subseteq (C^1 \gg)^{\perp_{\o}}$. 
Moreover, any Lagrangian subspace $\bar \ga'$ of $(V, \omega)$,
which is invariant by $\varphi$, will pull back to a $\varphi$-invariant Lagrangian 
ideal $\bar \ga$ in $(\bar \gg,  \omega)$.  The
existence of  such $\bar \ga'$ is guaranteed by Lemma \ref{lem:philow}. 

This finishes the proof.
\epf

\end{document}